\documentclass[11pt,oneside, a4paper]{amsart}
\usepackage{amsmath,amssymb,amsfonts} % Typical maths resource packages
\usepackage{graphics}                 % Packages to allow inclusion of graphics
\usepackage{color}                    % For creating coloured text and background
\usepackage{hyperref}                 % For creating hyperlinks in cross references

\usepackage[latin1]{inputenc}
%\usepackage[notcite,notref]{showkeys}
%\usepackage[notcite]{showkeys}

%\linespread{1.2}
\usepackage[all]{xy}
\usepackage{enumerate}
\title[Iterative differential Galois theory]{Iterative differential Galois theory\\ in positive characteristic: \\A model theoretic approach}
\author{Javier Moreno}
\thanks{This paper contains the results of the author's thesis, which was written under the kind supervision of Anand Pillay. The author would like thank him for his patience, advice and support.} 
\date{June, 2009}

\address{Institut Camille Jordan\\
Universit\'e Claude Bernard Lyon 1\\
43 boulevard du 11 novembre 1918\\
69622 Villeurbanne Cedex, France}
\email{moreno@math.univ-lyon1.fr}
\urladdr{http://math.univ-lyon1.fr/~moreno/}
\subjclass[2000]{Primary 03C98; Secondary 12H05}
%Mathematics Subject Classification 2000: 03C98 (Applications of Model Theory). Sec: 12H05 (Differential Algebra)

%%%% DEFINITIONS

\newtheorem{theorem}{Theorem}[section]

\newtheorem{fact}[theorem]{Fact}
\newtheorem{lemma}[theorem]{Lemma}
\newtheorem{corollary}[theorem]{Corollary}
% Estos en cambio son como en formato no italic...
\theoremstyle{definition}
\newtheorem{definition}[theorem]{Definition}
\newtheorem{example}[theorem]{Example}

%Mathematical definitions
\def\dd{\partial} %iterative derivation
\def\UU{\mathcal{U}} %highly saturated (monster) model.
\def\CC{\mathcal{C}} %constants of \UU
\def\AA{\mathcal{A}} %arc dude....
\def\LL{\mathcal{L}} %arc dude....

\def\MM{\mathcal{M}} %the auxiliary dude
\def\NN{\mathcal{N}} %the auxiliary dude prime...
\def\XX{\mathcal{X}} %auxiliary dude's son
\def\A{\mathbb{A}} %affine dude
\def\DD{\mathbb{D}}
\newcommand{\dgen}[1]{\langle #1 \rangle} %differentially generated (as in K=F<a>).
\newcommand{\Imag}{\operatorname{Im}}
\newcommand{\Gal}{\operatorname{Gal}}
\newcommand{\Th}{\operatorname{Th}}
\newcommand{\gal}{\operatorname{gal}}
\newcommand{\tp}{\operatorname{tp}}
\newcommand{\Aut}{\operatorname{Aut}}
\newcommand{\dcl}{\operatorname{dcl}}
\newcommand{\acl}{\operatorname{acl}}

\newcommand{\Cb}{\operatorname{Cb}}
\newcommand{\trdeg}{\operatorname{tr.deg}}

% A independiente de B sobre C: $A\nind_{C}B$
% A No independiente de B sobre C: $A\ind_{C}B$
\def\Ind#1#2{#1\setbox0=\hbox{$#1x$}\kern\wd0\hbox to 0pt{\hss$#1\mid$\hss}
\lower.9\ht0\hbox to 0pt{\hss$#1\smile$\hss}\kern\wd0}
\def\ind{\mathop{\mathpalette\Ind{}}}
\def\Notind#1#2{#1\setbox0=\hbox{$#1x$}\kern\wd0\hbox to 0pt{\mathchardef
\nn=12854\hss$#1\nn$\kern1.4\wd0\hss}\hbox to                            
0pt{\hss$#1\mid$\hss}\lower.9\ht0 \hbox to                            
0pt{\hss$#1\smile$\hss}\kern\wd0}

%%% BEGIN DOCUMENT
\begin{document}

\begin{abstract}
This paper introduces a natural extension of Kolchin's differential Galois theory to positive characteristic iterative differential fields, generalizing to the non-linear case the iterative Picard-Vessiot theory recently developed by Matzat and van der Put. We use the methods and framework provided by the model theory of iterative differential fields. We offer a definition of strongly normal extension of iterative differential fields, and then prove that these extensions have good Galois theory and that a $G$-primitive element theorem holds. In addition, making use of the basic theory of arc spaces of algebraic groups, we define iterative logarithmic equations, finally proving that our strongly normal extensions are Galois extensions for these equations. 
\end{abstract}

\maketitle

%\tableofcontents

\section{Introduction}
Differential Galois theory is the study of extensions of differential fields with well-behaved automorphism groups. In contrast to classic algebraic Galois theory, the automorphism groups of differential Galois extensions turn out to be algebraic groups or, in more general settings, differential algebraic groups. The origins of differential Galois theory trace back to the works of Picard and Vessiot in the 1890s studying linear differential equations, but it was Kolchin (following the work of Ritt) who gave the first systematic and modern account of the subject in his seminal book \cite{Kolchin1}. There he introduced, among other things, the notion of a strongly normal extension of differential fields as a non-linear generalization of the classic extensions discovered by Picard and Vessiot. Modern treatments of Kolchin's theory have been offered in recent years by Magid \cite{Magid}, Kovacic \cite{Kovacic} and Umemura \cite{Umemura}, among others. The current general reference on the subject is the book by Singer and van der Put \cite{vanderPut-Singer}.

For basic technical reasons regarding the nature of constants in positive characteristic, most of the work on differential Galois theory has been restricted to the context of characteristic zero differential fields, but several attempts have been made to work around this obstacle \cite{andre}\cite{vanderPut-Singer}. One of the most popular strategies is replacing the notion of a derivative for that of an iterative (Hasse-Schmidt) derivation. This approach was extensively studied (but with partial success) by Okugawa \cite{Okugawa1}\cite{Okugawa2} and Shikishima-Tsuji \cite{Shikishima}, and more recently by Matzat and van der Put \cite{Matzat}, who developed a full Picard-Vessiot theory for positive characteristic iterative differential fields with the use of the theory of torsors. 

Differential Galois theory and model theory have had a long history. It all started with Poizat's paper {\em Une théorie de Galois imaginaire} \cite{Poizat3} suggesting that Kolchin's main results could be obtained as a consequence of the $\omega$-stability of the theory of differentially closed fields and some work by Zilber and Hrushovski on the definability of automorphism groups \cite{Hrushovski}\cite{Zilber}. After this, several people including Marker, Pillay and Sokolovi\'c \cite{Anand-Marker}\cite{Anand3.5}\cite{Anand3}\cite{Anand2.5}\cite{Anand-Sokolovic} have contributed to the development and even generalization of Kolchin's results making use of these abstract tools from model theory. 

Positive characteristic differential Galois theory, however, remained out of reach of model theory until Messmer, Wood and Ziegler proved (strongly based on Delon's work on separably closed fields of positive characteristic \cite{Delon}) that the theory of fields equipped with stacks of commuting iterative derivations has a stable model companion with quantifier elimination and elimination of imaginaries \cite{Messmer-Wood}\cite{Ziegler}. After this, adapting some of his previous results in characteristic zero, and working along the lines of what Hrushovski suggested in \cite{Udi}, Pillay found alternative proofs of existence and uniqueness of iterative Picard-Vessiot extensions (results already proved by Matzat and van der Put) using now model-theoretic techniques \cite{Anand2}. 

Following Pillay, this article introduces a theory of strongly normal extensions for iterative differential fields of positive characteristic. These extensions generalize the Picard-Vessiot extensions developed by Matzat and Van Der Put as well as the strongly normal extensions proposed by Okugawa. Our results depend on the model theory of iterative differential fields. 

After introducing in section \ref{Section:Uno} the basic definitions and model theory of iterative differential fields of positive characteristic, in section \ref{Section:Dos} we define what we mean by an iterative strongly normal extension (Definition \ref{def:isne}). Then we prove that the Galois group of these extensions is isomorphic to the constant-rational points of an algebraic group defined over the constants of the base field (Theorem \ref{fact:definability}), we have good Galois correspondence (Theorem \ref{th:galcorr}), and also what Kolchin called a $G$-primitive element theorem (Theorem \ref{thm:pet}).

In section \ref{Section:Tres} we start all over again, this time from the perspective of (logarithmic) differential equations. This requires us to make an overview of the notion of the arc bundle of an algebraic variety (Definition \ref{def:arcs}) and introduce what should be our logarithmic derivation (Definition \ref{def:logder}). Once there we define what we mean by an iterative differential Galois extension for a given logarithmic differential equation (Definition \ref{def:idge}) and prove that these extensions exist and are unique modulo isomorphism (Theorem \ref{th:existanduniq}). 

Finally, in section \ref{Section:cuatro}, we show that, under certain hypothesis on the base field, iterative strongly normal extensions and iterative differential Galois extensions are just two faces of the same notion (Theorems \ref{th:galextstrnor} and \ref{th:equiv}). 

We assume that the reader has working knowledge of the fundamentals of geometric model theory, and a fair understanding of the terminology of varieties from algebraic geometry and basic differential algebra. 

\section{Iterative differential algebra: Model theory and practice}\label{Section:Uno} 

The aim of this section is offering a brief introduction to the basic model theory of iterative derivations. 

\begin{definition}Let $R$ be an arbitrary ring. A sequence of maps $\dd=(\dd_{i}\colon R\to R)_{i\in \omega}$ is called a \textbf{Hasse-Schmidt derivation} if $\dd_{0}=id_{R}$ and the map 
\[\DD_{\dd}\colon R\to R[[\epsilon]]\colon a\mapsto \sum_{i=0}^{\infty}\dd_{i}(a)\epsilon^{i}\] is a ring homomorphism. 

If, additionally, for any $i, j\in \omega$ we have that $\dd_{i}\circ\dd_{j}=\binom{i+j}{i}\dd_{i+j}$, we say that $\dd$ is an \textbf{iterative Hasse-Shmidt derivation} or simply an \textbf{iterative derivation}. 

A ring (field) R equipped with an iterative derivation $\dd$ is what we call an \textbf{iterative differential ring (field)} or ID-ring (field) and its \textbf{ring (field) of constants}, $C_{R}$, is defined as the set where all the $\dd_{i}$ vanish.

We say that an ID-field $F$ is \textbf{non-trivial} if $\dd_{1}|_{F}\not\equiv 0$. \end{definition}

Let $IDF_{p}$ be the first-order theory of fields of characteristic $p>0$ equipped with an iterative derivation $\dd$. The language we will consider is that of fields expanded with a sequence $(\dd_{i})_{i<\omega}$ of unary function symbols. This theory has a model companion, $SCH_{p}$,  the theory of separably closed ID-fields, $K$, of characteristic $p$, degree of imperfection $1$ (i.e. $[K:K^{p}]=p$) and $K^{p}=\{x\in K\colon \dd_{1}(x)=0\}$. 

Given  a model $K$ of $SCH_{p}$, we can see that $C_{F}=K^{p^{\infty}}$, an algebraically closed field. This theory is, in some sense, just another version of $SCF_{p,1}$, the theory of separably closed fields of characteristic $p$ and degree of imperfection $1$:
 
\begin{fact}\label{fact:SCH-SCF} Once a $p$-basis is fixed, every model of $SCF_{p,1}$ can be expanded to a model of $SCH_{p}$ and, additionally, any highly enough saturated model of $SCH_{p}$ can be canonically equipped with a $p$-basis and its corresponding $\lambda$-functions are quantifier-free definable.\end{fact}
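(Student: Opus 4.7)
The plan is to treat the two directions separately, both grounded in the basic linear-algebra fact that, when $[K:K^p]=p$ and $\{t\}$ is a $p$-basis, the family $\{1,t,t^2,\dots,t^{p^n-1}\}$ is a $K^{p^n}$-basis of $K$ for every $n$. This gives, for each $a\in K$ and each $n$, a unique expansion $a=\sum_{i=0}^{p^n-1}a_{n,i}^{p^n}t^i$ with $a_{n,i}\in K$; the maps $a\mapsto a_{n,i}$ are precisely the $\lambda$-functions of $SCF_{p,1}$ (with respect to this $p$-basis).

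For the first direction, starting from $K\models SCF_{p,1}$ and a fixed $p$-basis $\{t\}$, I would define, for each $n$, a map
\[
\DD_n:K\to K[\epsilon]/(\epsilon^{p^n}),\qquad \DD_n(a):=\sum_{i=0}^{p^n-1}a_{n,i}^{p^n}(t+\epsilon)^i.
\]
The key observation is that $(t+\epsilon)^{p^n}=t^{p^n}\pmod{\epsilon^{p^n}}$, so $\DD_n$ is the identity on $K^{p^n}$ and sends $t$ to $t+\epsilon$; using the uniqueness of the $p^n$-expansion one checks that $\DD_n$ is a ring homomorphism. Compatibility of the $\DD_n$ across levels (via the projections $K[\epsilon]/(\epsilon^{p^{n+1}})\to K[\epsilon]/(\epsilon^{p^n})$) is then routine, so they assemble into a ring homomorphism $\DD:K\to K[[\epsilon]]$. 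Extracting coefficients defines operators $\dd_i$ satisfying $\dd_0=id$; the iterative identity $\dd_i\circ\dd_j=\binom{i+j}{i}\dd_{i+j}$ follows, as usual, from applying $\DD$ twice and comparing $\DD(a)(\epsilon_1+\epsilon_2)$ with $(\DD\otimes\DD)(a)$. Finally, $\dd_1(a)=0$ iff $a\in K^p$ is immediate because $a=a_{1,0}^p+a_{1,1}^p t$ gives $\dd_1(a)=a_{1,1}^p$, so the $SCH_p$ axioms are satisfied.

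For the second direction, let $K\models SCH_p$ be sufficiently saturated. I would first realize, by saturation, an element $t\in K$ with $\dd_1(t)=1$ and $\dd_i(t)=0$ for $2\le i\le N$, for arbitrary $N$; each finite piece of this type is consistent because it is satisfied by the generic element of $K^{p^\infty}(x)$ with its canonical iterative derivation, and hence by an element of any model of $SCH_p$ containing such a configuration. Thus a highly saturated $K$ contains a $t$ with $\DD_\partial(t)=t+\epsilon$, and since $\dd_1(t)=1\ne 0$ we have $t\notin K^p$, so $\{t\}$ is a $p$-basis. For this canonical $t$, the equation $\DD_\partial(a)=\sum_i\dd_i(a)\epsilon^i$ must match the formal expansion $\sum_{i<p^n}a_{n,i}^{p^n}(t+\epsilon)^i$ coming from the $p$-basis, giving a triangular system
\[
\dd_j(a)=\sum_{i=j}^{p^n-1}\binom{i}{j}t^{i-j}\,a_{n,i}^{p^n}\qquad (0\le j<p^n).
\]
Solving upward from $j=p^n-1$ (where $\dd_{p^n-1}(a)=a_{n,p^n-1}^{p^n}$) expresses each $a_{n,i}^{p^n}$ as a polynomial in $t$ and the $\dd_j(a)$; since in $SCF_{p,1}$ the unique $p^n$-th root is quantifier-free definable, each $\lambda$-function itself is quantifier-free definable in the $SCH_p$ language.

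The one genuine obstacle is the saturation argument in the second part: one must justify that the partial type $\{\dd_1(t)=1\}\cup\{\dd_i(t)=0:i\ge 2\}$ is consistent with $SCH_p$. This reduces to producing, for every $N$, a model of $IDF_p$ of characteristic $p$ containing an element $t$ with $\dd_1(t)=1$ and $\dd_i(t)=0$ for $2\le i\le N$, which is achieved explicitly by equipping $\mathbb{F}_p(t)$ with its standard Hasse-Schmidt derivation and embedding it into a separably closed extension by model completeness of $SCH_p$. Once this is in hand, both directions of the fact are delivered by purely formal manipulations of $p$-expansions and Leibniz-type identities.
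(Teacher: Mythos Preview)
The paper does not give its own proof of this fact; it simply cites Ziegler \cite{Ziegler}. Your sketch is a correct and reasonably faithful reconstruction of the argument that appears there: build the Hasse--Schmidt derivation on a model of $SCF_{p,1}$ by sending the fixed $p$-basis element $t$ to $t+\epsilon$, and conversely, in a saturated model of $SCH_p$, realise the type $\{\dd_1(t)=1\}\cup\{\dd_i(t)=0:i\ge 2\}$ to obtain a $p$-basis element for which the $\lambda$-functions are recoverable from the triangular system you wrote down.

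One phrasing issue is worth flagging. When you write ``since in $SCF_{p,1}$ the unique $p^n$-th root is quantifier-free definable,'' this sounds circular: in $SCF_{p,1}$ the $\lambda$-functions \emph{are} the primitive symbols encoding $p$-th roots, and those are exactly what you are trying to define in the $SCH_p$ language. What actually makes the step work is simpler and purely in the ring language: your triangular system expresses each $\lambda_i(a)^{p}$ as a polynomial $P_i$ in $t$ and the $\dd_j(a)$, and then the graph of $\lambda_i$ is defined by the quantifier-free condition $c^{p}=P_i(t,\dd_0(a),\ldots,\dd_{p-1}(a))$. That is all that ``quantifier-free definable'' requires here. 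With this clarification, both directions go through as you describe.
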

\begin{proof}See \cite{Ziegler}.\end{proof}

And, as a consequence of this,

\begin{fact}\label{fact:SCH}
$SCH_{p}$ is stable (non-superstable) and has quantifier elimination and elimination of imaginaries. 
\end{fact}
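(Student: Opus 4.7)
The plan is to deduce Fact \ref{fact:SCH} directly from Fact \ref{fact:SCH-SCF} together with the known model-theoretic properties of $SCF_{p,1}$, namely Shelah's result that $SCF_{p,1}$ is stable but not superstable, Delon's quantifier elimination in the language enriched by the $\lambda$-functions, and Delon--Messmer's elimination of imaginaries. The idea is that Fact \ref{fact:SCH-SCF} furnishes a dictionary between $SCF_{p,1}$ (in the $p$-basis/$\lambda$-function language) and $SCH_p$ (in the Hasse--Schmidt language) at the level of sufficiently saturated models, and every property in the statement is preserved by such a dictionary.

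First I would prove quantifier elimination. Let $\varphi(\bar x)$ be a formula in the language of $SCH_p$, and pass to a sufficiently saturated model $K\models SCH_p$. By Fact \ref{fact:SCH-SCF}, $K$ carries a canonical $p$-basis whose $\lambda$-functions are quantifier-free definable from the $\dd_i$'s, so $K$ is naturally a model of $SCF_{p,1}$ in Delon's expanded language. Quantifier elimination there gives a quantifier-free formula $\psi(\bar x)$ in the language of fields with $\lambda$-functions equivalent to $\varphi$. Replacing each occurrence of a $\lambda$-function by its quantifier-free $\dd$-definition produces a quantifier-free formula in the language of $SCH_p$ equivalent to $\varphi$ in $K$; since $K$ was an arbitrary saturated model, this equivalence is a consequence of $SCH_p$. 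Stability and elimination of imaginaries transfer by the same principle: the definable-set structure of a saturated model of $SCH_p$ is interdefinable with the definable-set structure of the underlying model of $SCF_{p,1}$, and both stability and elimination of imaginaries are invariants of this structure.

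For non-superstability I would exhibit an explicit witness internal to the $SCH_p$-language, so as not to rely on the non-canonical choice of $p$-basis. The descending chain of definable additive subgroups
\[
K \;\supset\; \{x : \dd_1(x)=0\} \;\supset\; \{x : \dd_1(x)=\dd_p(x)=0\} \;\supset\; \{x : \dd_1(x)=\dd_p(x)=\dd_{p^2}(x)=0\} \;\supset\; \cdots
\]
is a strictly descending chain of $\emptyset$-definable subgroups (these are precisely the subfields $K^{p^n}$, as in any model of $SCH_p$ we have $C_K=K^{p^\infty}=\bigcap_n K^{p^n}$), which forbids superstability by the standard criterion. Stability itself then follows either from the transfer argument above or from the fact that the underlying field is separably closed, hence stable.

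The only genuinely delicate step is the quantifier-elimination transfer, because Fact \ref{fact:SCH-SCF} only gives the canonical $p$-basis in \emph{sufficiently saturated} models. Some care is needed to argue that the equivalence $\varphi\leftrightarrow\psi$, established in every sufficiently saturated model, actually follows from the theory $SCH_p$; this is standard and uses that any two models can be elementarily embedded into a common sufficiently saturated one. Once that is in place the rest is bookkeeping.
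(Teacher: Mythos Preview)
The paper itself gives no proof here; it simply cites Ziegler. Your transfer strategy via Fact~\ref{fact:SCH-SCF} and the known model theory of $SCF_{p,1}$ is the natural thing to do, and it works cleanly for stability, elimination of imaginaries, and non-superstability (your descending chain is exactly the standard witness, and indeed $\{x:\dd_1(x)=\cdots=\dd_{p^{n-1}}(x)=0\}=K^{p^n}$ in any model of $SCH_p$).

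The quantifier-elimination step, however, has a gap you have not identified. The issue is not saturation but \emph{parameters}. First, to apply Delon's QE to a $\dd$-formula $\varphi$ you need the $\dd_i$ to be definable in the $\lambda$-language, whereas Fact~\ref{fact:SCH-SCF} as stated only records the reverse direction. Second, and more seriously, Delon's language for $SCF_{p,1}$ contains a constant for the $p$-basis, and the ``canonical'' $p$-basis element $t$ that Fact~\ref{fact:SCH-SCF} produces in a saturated model of $SCH_p$ is not $\emptyset$-definable there: any $t$ with $\dd_i(t)=\delta_{i,1}$ serves, and two such differ by an arbitrary constant. So when you translate the quantifier-free $\lambda$-formula $\psi$ back into the $\dd$-language, $t$ may survive as a parameter, and you have only shown that $\varphi$ is equivalent to a quantifier-free $\dd$-formula over $\{t\}$, not over $\emptyset$. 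Eliminating that parameter is precisely the nontrivial content of QE for $SCH_p$; it does not come for free from the dictionary, and this is why Ziegler's argument treats QE directly rather than by naive transfer.
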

\begin{proof}See \cite{Ziegler}.\end{proof}

Since $SCH_{p}$ is stable, we may let $(\UU,\dd)$ be a saturated model of $SCH_{p}$ of {\em large} cardinality and $\CC$ its field of constants.

As in the case of characteristic zero differentially closed fields, the field of constants of $\UU$ is a {\em pure} algebraically closed field, that is, any definable subset is definable in the language of rings. Note that in this case, though, $\CC$ is not definable but type-definable:  
\begin{fact}\label{lem:Cpure}
If $Z\subset \UU^{m}$ is definable in $\UU^{m}$ over $A$, then $Z\cap \CC^{m}$ is definable in $(\CC, +,\ \cdot\ )$ over $\dcl(A)\cap\CC$. 
\end{fact}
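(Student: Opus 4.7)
The plan is to combine the stability and elimination of imaginaries of $SCH_{p}$ (Fact \ref{fact:SCH}) with the fact that $\CC=\UU^{p^{\infty}}$ carries only the structure of a pure algebraically closed field. First, stability implies that every type-definable set in $\UU$ is stably embedded, so each $\UU$-definable subset of $\CC^{m}$ is already definable in the induced structure on $\CC$ using parameters from $\CC$. Second, once the induced structure on $\CC$ is identified with the pure ring language, this yields a ring formula $\theta(x,b)$, $b\in\CC$, defining $Z\cap\CC^{m}$.

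For the parameter descent, I would let $e\in\UU^{\operatorname{eq}}$ be the canonical parameter of the $\UU$-definable set $Z\cap\CC^{m}$. Because $Z\cap\CC^{m}$ is $A$-definable, $e\in\dcl^{\operatorname{eq}}(A)$, and by elimination of imaginaries for $SCH_{p}$ (Fact \ref{fact:SCH}), $e$ may be realized as a finite tuple of $\UU$ lying in $\dcl(A)$. By stable embeddedness the same $e$ also belongs to $\dcl^{\operatorname{eq}}(\CC)$; applying elimination of imaginaries in the induced ACF structure on $\CC$, $e$ is interdefinable with a tuple from $\CC$ itself. Combining, $e\in\dcl(A)\cap\CC$, and $Z\cap\CC^{m}$ is defined in $(\CC,+,\cdot)$ over $\dcl(A)\cap\CC$.

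The main obstacle is establishing that the structure induced by $\UU$ on $\CC$ is really pure ACF, with no extra definable sets. The natural route goes through Fact \ref{fact:SCH-SCF}: once a $p$-basis is fixed, a sufficiently saturated model of $SCH_{p}$ is the expansion of a model of $SCF_{p,1}$, and in the latter the subfield $K^{p^{\infty}}$ is classically a pure ACF since the $\lambda$-functions of a $p$-basis trivialize on $p^{\infty}$-th powers. Identifying $\CC$ with this subfield on the $SCF_{p,1}$ side then transfers the purity. Once this ingredient is in hand the rest is the standard canonical parameter argument sketched above.
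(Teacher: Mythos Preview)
Your argument is correct and is precisely the content behind the paper's one-line proof (``By stability and quantifier elimination''): stable embeddedness of $\CC$ from stability, purity of the induced structure on $\CC$, and a canonical-parameter descent using elimination of imaginaries.

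Two small simplifications are worth noting. First, the detour through $SCF_{p,1}$ and $\lambda$-functions is unnecessary: quantifier elimination in $SCH_p$ already gives purity of $\CC$ directly, since each $\partial_i$ with $i>0$ vanishes identically on $\CC$, so any quantifier-free $SCH_p$-formula restricted to tuples from $\CC$ collapses to a formula in the ring language. Second, $Z\cap\CC^{m}$ is only \emph{type}-definable in $\UU$ (because $\CC$ is), so speaking of its canonical parameter in $\UU^{\mathrm{eq}}$ is slightly imprecise; it is cleaner to take the canonical parameter $e$ of $Z\cap\CC^{m}$ in the ACF structure on $\CC$ (where that set \emph{is} definable), and then observe that any $\sigma\in\Aut(\UU/A)$ fixes $Z\cap\CC^{m}$ setwise and restricts to a field automorphism of $\CC$, hence fixes $e$, giving $e\in\dcl(A)\cap\CC$ in one step.
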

\begin{proof}By stability and quantifier elimination.\end{proof}

We will now define three different closure operators of algebraic nature. 
\begin{definition}Let $A\subset \UU$.
\begin{itemize}
\item \textbf{The iterative differential closure of $A$}, denoted $\dgen{A}$, will be the iterative differential subfield of $\UU$ generated by the elements of $A$. If $F$ is an $ID$-field and $A$ is a set, by $F\dgen{A}$ we mean $\dgen{FA}$.
\item \textbf{The strict closure of $A$}, denoted $A^{s}$, will be the set obtained after closing $A$ under $p$th-roots. 
\item Finally, \textbf{the relative algebraic closure of $A$}, denoted $A^{a}$, will be the field theoretic algebraic closure of $A$ inside $\UU$. 
\end{itemize}
\end{definition}

Benoist \cite{Benoist} gave useful algebraic characterizations of the model theoretic definable and algebraic closures in $\UU$ in terms of these closure operators:
\begin{fact}
Let $A\subset \UU$. 
\begin{itemize}
\item $\dcl(A)=\dgen{A}^{s}$
\item $\acl(A)=\dgen{A}^{a}$
\end{itemize}
\end{fact}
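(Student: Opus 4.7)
My plan is to combine quantifier elimination in $SCH_p$ (Fact \ref{fact:SCH}) with the unique extension of iterative Hasse-Schmidt derivations along separable algebraic extensions, plus stability-theoretic counting of realizations. The easy directions are immediate: each $\dd_i$ is a function symbol of the language, so $\dgen{A} \subseteq \dcl(A)$, and in characteristic $p$ the Frobenius is injective, so the $p$-th root of an element of $\UU$, when it exists, is the unique solution of $x^p=y$, giving $\dgen{A}^s \subseteq \dcl(A)$. On the algebraic side, an element field-algebraic over $\dgen{A}$ has finitely many field-conjugates and therefore finitely many ID-conjugates, so $\dgen{A}^a \subseteq \acl(A)$.

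For the reverse inclusions, set $F := \dgen{A}^s$. One checks, using that $\dd_i(b)^p = \dd_{ip}(b^p)$ in characteristic $p$, that $F$ is an ID-subfield of $\UU$; it is perfect by construction. I would take $a \in \UU \setminus F$ and produce a non-trivial element of $\Aut(\UU/F)$ sending $a$ elsewhere, with infinitely many possible images whenever $a$ is also field-transcendental over $F$. If $a$ is field-algebraic over $F$, perfectness of $F$ forces $a$ to be separable-algebraic, and since $\UU$ is separably closed all of its field-conjugates lie in $\UU$. The standard lifting lemma for Hasse-Schmidt derivations makes any $F$-field isomorphism $F(a) \to F(a')$ automatically an ID-isomorphism, and quantifier elimination plus saturation then extend this to an element of $\Aut(\UU/F)$. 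If instead $a$ is field-transcendental over $F$, the type $\tp(a/F)$ should be non-algebraic, and I would build an infinite sequence of realizations inside $\UU$ by successively embedding $F\dgen{a}$ into $\UU$ over $F$, using the existential closedness of $\UU$ as a model of the model companion of $IDF_p$ together with saturation.

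The main obstacle is the transcendental case: one must rule out that some non-trivial ID-polynomial relation over $F$ accidentally places $a$ in $\acl(F)$. The cleanest workaround is to invoke stability and build a Morley-style sequence of realizations in an elementary extension, then pull them back into $\UU$ by saturation; alternatively, one may reduce via Fact \ref{fact:SCH-SCF} to the classical statement in $SCF_{p,1}$ that the model-theoretic algebraic closure there coincides with the relative field-theoretic algebraic closure of the $\lambda$-closure of the base set. The two cases together simultaneously yield $\dcl(A) \subseteq \dgen{A}^s$ and $\acl(A) \subseteq \dgen{A}^a$.
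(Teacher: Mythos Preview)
The paper itself does not give a proof; it simply cites Propositions~II.2 and~II.3 of Benoist's thesis \cite{Benoist}. Your outline therefore already goes beyond what the paper provides, and its overall architecture --- easy inclusions from the function symbols and injectivity of Frobenius, hard inclusions via quantifier elimination together with the unique lifting of iterative derivations along separable algebraic extensions, and a reduction to the known description of $\acl$ in $SCF_{p,1}$ for the transcendental case --- is the standard route and is in the spirit of Benoist's argument.

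There is, however, a genuine error in your treatment of the algebraic case. You assert that $F=\dgen{A}^{s}$ is perfect and use this to conclude that any $a$ algebraic over $F$ is separably algebraic. But $F$ need not be perfect: $\UU$ has degree of imperfection $1$, so a $p$-basis element $t\in\UU$ has no $p$-th root in $\UU$; if $t\in A$ then $t\in F$ while $t\notin F^{p}$. What strict-closedness of $F$ actually gives you is weaker but still sufficient: if $a\in\UU\setminus F$ is algebraic over $F$, then $a$ cannot be \emph{purely inseparable} over $F$, since otherwise $a^{p^{n}}\in F$ for some $n$ and hence $a\in F^{s}=F$. Consequently the separable closure $F_{1}$ of $F$ inside $F(a)$ strictly contains $F$. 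A primitive element $b$ of $F_{1}/F$ is then separable of degree at least $2$ over $F$, and your own argument (unique extension of the iterative derivation along $F(b)/F$, quantifier elimination, homogeneity of $\UU$) shows $b\notin\dcl(F)$. Since $b\in F(a)\subseteq\dcl(Fa)$, this forces $a\notin\dcl(F)$ as well. With this correction your proof goes through.
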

\begin{proof} See Proposition II.2 and Proposition II.3 of  \cite{Benoist}.
\end{proof}

\section{Iterative strongly normal extensions}\label{Section:Dos}
From now on, let us fix a prime number $p>0$. As in the previous chapter, let $\UU$ be a saturated model of $SCH_{p}$ of large cardinality where any ID-field mentioned is embedded and let $\CC$ the field of constants of $\UU$. 

Let us also assume that $(F, \dd)$ is an iterative differential field with $\dd_{1}|_{F}\not\equiv 0$. 

\subsection{Definition and basic properties}

\begin{definition}\label{def:isne} An extension $(F,\dd)<(K,\dd)$ of non-trivial definably closed ID-fields is said to be \textbf{strongly normal} if the following conditions hold:

\begin{enumerate}
\item $C_{F}=C_{K}$, and $C_{F}$ is algebraically closed;
\item $K=F\dgen{a}^{s} (=\dcl(Fa))$ for some $a=(a_{1},\cdots,a_{m})$;
\item Whenever $\sigma\colon K \hookrightarrow \UU$ is an embedding of
  $K$ into $\UU$ over $F$, then $\sigma(K)\subseteq K\dgen{\CC}$; and finally,
%\item (\textbf{Hopefully to be removed}) $K/F$ is separable; and finally,
%\item $\acl(F)\cap K=F\dgen{d}$ for some $d=(d_{1},\cdots,d_{m})$.
\item $F^{a}\cap K = F\dgen{d}^{s}$ for some $d=(d_{1},\cdots,d_{m})$.
\end{enumerate}

Following Kolchin, the \textbf{Galois group} of the strongly normal extension $K/F$, denoted $\Gal(K/F)$ will be $\Aut_{\dd}(K\dgen{\CC}/F\dgen{\CC})$. The traditional $\Aut_{\dd}(K/F)$ will be denoted instead $\gal(K/F)$.\end{definition}

Our definition of a strongly normal theory does not differ much from the original one due to Kolchin in the characteristic zero case. We require, though, one property that in Kolchin's case is automatic: $\tp(a/F)$ should have finite multiplicity. This is going to be crucial to assure the definability of our Galois group. The extra-condition (4) will do this for us:

\begin{fact}\label{fact:mult} If $K=F\dgen{a}^{s}$ is an iterative strongly normal extension of $F$, then $\tp(a/F)$ has finite multiplicity.\end{fact}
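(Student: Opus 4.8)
The plan is to extract finite multiplicity of $\tp(a/F)$ directly from condition (4) of Definition \ref{def:isne}, using the general model-theoretic meaning of multiplicity in the stable theory $SCH_p$ together with the algebraic description of $\acl$ from Benoist's characterization. Recall that the multiplicity of $\tp(a/F)$ (with $F$ definably closed) is the number of distinct extensions of $\tp(a/F)$ to a complete type over $\acl(F) = F^a$ (equivalently over any model containing $F$ that is algebraic over $F$); finite multiplicity is exactly the statement that this number is finite, i.e. that $\tp(a/F)$ does not fork-divide into infinitely many pieces over the algebraic closure. So the first step is to reduce the claim to bounding the number of conjugates of $a$ over $F$ that remain consistent with $\tp(a/F)$, which amounts to understanding $\acl(Fa) \cap F^a$, or more precisely the relationship between $\dcl(Fa)$, $F^a$, and the field $F\dgen{d}^s$ appearing in (4).

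Next I would argue as follows. By condition (2), $K = \dcl(Fa) = F\dgen{a}^s$, so $a \in \dcl(Fa) = K$ and $K$ is finitely generated as a definably closed ID-field over $F$. By condition (4), $F^a \cap K = F\dgen{d}^s = \dcl(Fd)$ for some finite tuple $d$; in particular $F^a \cap K$ is a \emph{definably closed} ID-field which is \emph{algebraic} over $F$ in the field-theoretic sense. Using Benoist's fact that $\acl(A) = \dgen{A}^a$ and $\dcl(A) = \dgen{A}^s$, the set $F^a \cap K$ being $\dcl$-closed and field-algebraic over $F$ means it is contained in $\acl(F)$; since it is also contained in $K = \dcl(Fa)$, it sits inside $\acl(F) \cap \dcl(Fa)$. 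The point is then that $\tp(a/F^a\cap K)$, or rather the behaviour of $a$ over $\acl(F)$, is controlled: any $F$-conjugate $a'$ of $a$ lying in $\UU$ generates $K' = \dcl(Fa')$, and the "algebraic part" $F^a \cap K'$ is an $F$-conjugate of $F\dgen{d}^s$. Since $d$ is a finite tuple and $\tp(d/F)$ — being the type of elements algebraic over $F$ — has only finitely many conjugates, there are only finitely many possibilities for $F^a \cap K$ among the conjugates of $a$.

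The decisive step is to show that once the "algebraic part" $F^a\cap K = F\dgen{d}^s$ is fixed, the type $\tp(a/F)$ has a \emph{unique} extension over it; equivalently, $\tp(a/F)$ and the value of $d$ together determine a complete type over $\acl(F)$. Here is where I expect the main work: one should show that $a$ is "purely transcendental / stationary" over $F\dgen{d}^s$ in the sense that $\tp(a/F\dgen{d}^s)$ is stationary, so that its unique nonforking extension to $\acl(F)$ is the only extension of $\tp(a/F\dgen{d}^s)$ that we need to count. Concretely: $\acl(F) \cap K = F^a\cap K = F\dgen{d}^s = \dcl(Fd)$ already contains \emph{all} elements of $K$ algebraic over $F$, so over the parameter set $F\dgen{d}^s$ the tuple $a$ has no further algebraicity coming from inside $K$; combined with the fact that in $SCH_p$ (a stable theory with elimination of imaginaries) a type over a definably closed set whose realizations add no new algebraic elements is stationary, we conclude $\tp(a/F\dgen{d}^s)$ is stationary. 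Therefore the number of extensions of $\tp(a/F)$ to $\acl(F)$ is at most the number of extensions of $\tp(d/F)$ to $\acl(F)$, which is finite because $d$ is algebraic over $F$. Hence $\tp(a/F)$ has finite multiplicity.

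The anticipated obstacle is the stationarity claim: making precise why "no new algebraic elements over $F\dgen{d}^s$ inside $K$" forces $\tp(a/F\dgen{d}^s)$ to be stationary, rather than merely having finitely many extensions. I would handle this by invoking elimination of imaginaries for $SCH_p$ (Fact \ref{fact:SCH}) to identify the canonical base / strong type of $a$ over $\acl(F)$ with a tuple from $\acl(F\dgen{d}^s) = \dgen{F\dgen{d}^s}^a = F^a$ (since $F\dgen{d}^s$ is already relatively algebraically closed being $F^a\cap K$... — care is needed here, as $F\dgen{d}^s$ need not be relatively algebraically closed in $\UU$, only in $K$), and then arguing that any element witnessing non-stationarity would have to lie in $K$ by condition (3) (every conjugate of $K$ over $F$ lies in $K\dgen{\CC}$, and the relevant algebraic data is defined over the constants which are shared by $F$ and $K$ by condition (1)), contradicting that $F\dgen{d}^s$ already exhausts $F^a\cap K$. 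So the genuine content is the interplay of conditions (1), (3), and (4), funnelled through elimination of imaginaries, to pin down the canonical base inside $F\dgen{d}^s$ and thereby force stationarity.
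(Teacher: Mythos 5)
Your overall route is the same as the paper's: reduce finite multiplicity to the statement that $\acl^{eq}(F)\cap\dcl^{eq}(Fa)$ is finitely generated over $F$, and then read that off from condition (4). The paper packages this as an abstract lemma (in a stable theory, $\tp(a/F)$ has finite multiplicity iff $\acl^{eq}(F)\cap\dcl^{eq}(Fa)=\dcl^{eq}(Fc)$ for a finite tuple $c$) proved in both directions via the finite equivalence relation theorem and canonical bases; once that lemma is in hand, the Fact is immediate from Benoist's identifications $\acl(F)=F^{a}$, $\dcl(Fa)=K$, $\dcl(Fd)=F\dgen{d}^{s}$ and elimination of imaginaries. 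So the idea is right.

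The gap is exactly where you flag it, but the repair you sketch is off target. Your claim that "in $SCH_p$ a type over a definably closed set whose realizations add no new algebraic elements is stationary" is not a theorem in that form; the precise statement is that for a $\dcl^{eq}$-closed $A$, $\tp(a/A)$ is stationary iff $\acl^{eq}(A)\cap\dcl^{eq}(Aa)\subseteq A$. To apply this with $A=F\dgen{d}^{s}$, you just need $\acl(F\dgen{d}^{s})=F^{a}$, and this follows simply from $F\subseteq F\dgen{d}^{s}\subseteq F^{a}$ and idempotence of $\acl$ --- your worry about $F\dgen{d}^{s}$ not being relatively algebraically closed in $\UU$ is a red herring. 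Then $\acl^{eq}(F\dgen{d}^{s})\cap\dcl^{eq}(F\dgen{d}^{s}a)$ lives, by elimination of imaginaries, in $F^{a}\cap K = F\dgen{d}^{s}$ by condition (4), and stationarity follows. Equivalently (and this is the paper's actual route): $\Cb(\tp(a/\acl^{eq}(F)))$ lies in $\acl^{eq}(F)$ (since it is in $\dcl^{eq}$ of the base of the type) and also in $\dcl^{eq}(Fa)$ (any automorphism fixing $Fa$ pointwise fixes the type $\tp(a/\acl^{eq}(F))$ setwise, hence fixes its canonical base pointwise); by elimination of imaginaries and (4) it is thus interdefinable with a tuple in $F\dgen{d}^{s}=\dcl(Fd)$, and since $d\in\acl(F)$ it has only finitely many $F$-conjugates, giving finite multiplicity. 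Conditions (1) and (3) play no role here; invoking (3) to "pin down the canonical base" is unnecessary and would not work as stated, since (3) says nothing about algebraicity over $F$.
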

This is a corollary of the following general lemma:
\begin{lemma}
Let $T$ be a stable theory and $\UU$ a highly saturated model of $T$. Then, for
any $a\in \UU$ and $F\subseteq \UU$, we have that $\tp(a/F)$ has finite
multiplicity if and only if there exists a finite tuple $c\in \UU^{eq}$ 
such that $\acl^{eq}(F)\cap\dcl^{eq}(aF)=\dcl^{eq}(Fc)$.
\end{lemma}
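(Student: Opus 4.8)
The statement is a standard fact about multiplicity in stable theories, phrased in terms of $\UU^{eq}$, and the natural approach is to unwind the definition of multiplicity via canonical bases of the nonforking extensions of $\tp(a/F)$. Recall that $\tp(a/F)$ has finite multiplicity means that $\tp(a/\acl^{eq}(F))$ has finitely many conjugates under $\Aut(\UU/F)$, equivalently that the algebraic closure $\acl^{eq}(F)$ needs only finitely many bits to pin down a complete type over it extending $\tp(a/F)$; more precisely, $\tp(a/F)$ has finite multiplicity iff $\tp(a/\acl^{eq}(F))$ is definable over $\dcl^{eq}(Fc)$ for some finite $c\in\acl^{eq}(F)$.

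For the direction from finite multiplicity to the existence of $c$, the plan is: let $c$ be a finite tuple enumerating a canonical base, or rather a finite subset of $\acl^{eq}(F)$ over which the (finitely many, by finite multiplicity) nonforking extensions of $\tp(a/F)$ to $\acl^{eq}(F)$ are already determined and which contains the canonical parameter of the stationary type $\tp(a/\acl^{eq}(F))$. Then I would show $\dcl^{eq}(Fc)\supseteq\acl^{eq}(F)\cap\dcl^{eq}(aF)$: any $e\in\acl^{eq}(F)\cap\dcl^{eq}(aF)$ is a function of $a$ and lies in $\acl^{eq}(F)$, and since the orbit of $e$ under $\Aut(\UU/Fc)$ is finite (as $e\in\acl^{eq}(F)\subseteq\acl^{eq}(Fc)$) but also $e$ is fixed by any automorphism fixing the type $\tp(a/\acl^{eq}(F))$ pointwise — here one uses that conjugates of $e$ over $F$ correspond to distinct nonforking extensions, all of which are accounted for by $c$ — one concludes $e\in\dcl^{eq}(Fc)$. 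The reverse containment $\dcl^{eq}(Fc)\subseteq\acl^{eq}(F)\cap\dcl^{eq}(aF)$ requires instead choosing $c$ inside $\dcl^{eq}(aF)$ as well; one arranges this by taking $c$ to be the canonical base $\Cb(\tp(a/\acl^{eq}(F)))$ together with a finite tuple coding the finite set of conjugates, and noting $\Cb$ of a type realized by $a$ lies in $\dcl^{eq}(aF)$ after adding $a$ — more carefully, one replaces $c$ by the (still finite) canonical parameter of the finite set $\{e_1,\dots,e_k\}=\operatorname{orbit}(a/F)\cap(\text{relevant quotient})$, which is simultaneously in $\acl^{eq}(F)$ and in $\dcl^{eq}(aF)$.

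For the converse, suppose $\acl^{eq}(F)\cap\dcl^{eq}(aF)=\dcl^{eq}(Fc)$ with $c$ finite. I would argue that $\tp(a/\acl^{eq}(F))$ is definable over (hence determined by) $\dcl^{eq}(Fc)$: the canonical base of the stationarization of $\tp(a/F)$ lies in $\dcl^{eq}(aF)\cap\acl^{eq}(F)$, because the canonical base is algebraic over $F$ after naming $a$ (it is in $\dcl^{eq}$ of a Morley sequence inside $\dcl^{eq}(aF)$) and is always in $\acl^{eq}(F)$; so $\Cb\subseteq\dcl^{eq}(Fc)$, and the number of nonforking extensions of $\tp(a/F)$ to $\acl^{eq}(F)$ is the number of conjugates of this canonical base over $F$, which is finite since $c$ is a finite tuple in $\acl^{eq}(F)$. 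Hence $\tp(a/F)$ has finitely many nonforking extensions to $\acl^{eq}(F)$, i.e. finite multiplicity.

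The main obstacle I expect is getting the tuple $c$ to live on both sides of the equation simultaneously — in $\acl^{eq}(F)$ (so it contributes to multiplicity / is "algebraic over $F$") and in $\dcl^{eq}(aF)$ (so that $\dcl^{eq}(Fc)$ does not overshoot $\dcl^{eq}(aF)$). The clean way around this, which I would use, is to take $c$ to be a canonical parameter for the finite set of realizations in $\acl^{eq}(F)$ of the relevant "internal" information of $\tp(a/F)$ — concretely, fix a formula $\varphi(x,y)$ and parameter $b\in\acl^{eq}(F)$ with $\varphi(x,b)$ isolating $\tp(a/\acl^{eq}(F))$ among nonforking extensions, let $\{b_1,\dots,b_k\}$ be the (finite, by finite multiplicity) $F$-conjugates of $b$ that are consistent with $a$, and let $c$ code the pair (this finite set, which element $b_i$ works for $a$); finiteness of multiplicity makes this set finite and hence $c\in\acl^{eq}(F)$, while its definition from $a$ and $F$ gives $c\in\dcl^{eq}(aF)$. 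Verifying that $\dcl^{eq}(Fc)$ equals the full intersection $\acl^{eq}(F)\cap\dcl^{eq}(aF)$, rather than merely being contained in it, is the delicate point and uses stationarity of $\tp(a/\acl^{eq}(F))$ together with the fact that anything in $\acl^{eq}(F)\cap\dcl^{eq}(aF)$ is fixed by all automorphisms fixing $F$ and the chosen nonforking extension.
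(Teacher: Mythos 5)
Your proposal is correct and follows essentially the same route as the paper: the converse direction in both cases observes that $\Cb(\tp(a/\acl^{eq}(F)))$ lies in $\acl^{eq}(F)\cap\dcl^{eq}(aF)=\dcl^{eq}(Fc)$ and therefore has only finitely many $F$-conjugates. For the forward direction the paper invokes the finite equivalence relation theorem directly, taking $c$ to be the $E$-class of $a$ for the $F$-definable finite equivalence relation $E$ separating the nonforking extensions of $\tp(a/F)$ to $\acl^{eq}(F)$; your hand-built $c$ (an isolating parameter together with a code for its finite $F$-orbit, selected by $a$) is interdefinable over $F$ with that $E$-class, so the two constructions encode the same information and the verification that $\dcl^{eq}(Fc)$ absorbs the whole intersection is the same stationarity argument.
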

\begin{proof}
$\Rightarrow$) Let $p=\tp(a/F)$ and let $p_1,\ldots,p_m$ be the complete
extensions of $p$ to $\acl(F)$, and $p_1=\tp(a/acl(F))$.  The finite equivalence relation theorem provides us with a single $F$-definable finite equivalence relation $E$ distinguishing the extensions of $p$ to $\acl(A)$. Let $c$ be the $E$-class of $a$. By definition $c\in \dcl^{eq}(Fa)$, and, since $E$ is an equivalence relation over $F$ with finitely many classes, $c\in \acl^{eq}(F)$. 

On the other hand, let $b\in \dcl^{eq}(aF)\cap \acl^{eq}(F)$. Thus we have $b=f(a)$ for some
$F$-definable function $f$. Let $\sigma$ be an automorphism of $\UU$ fixing $Fc$. Suppose $\sigma(p_{1})\neq p_{1}$. Then $\sigma(p_{1})=p_{i}$, for some $i\neq 1$ and so $\sigma(c)\neq c$, contradicting the choice of $\sigma$. Thus, the formula $\sigma(b)=f(x)$ is still in $p_1$. But then, $\sigma(b)=f(a)=b$. This implies that $b\in \dcl^{eq}(Fc)$.\\
$\Leftarrow$) Let $d=\Cb(\tp(a/\acl^{eq}(F)))$. We know that $d\subseteq \acl^{eq}(F)$
and it is also clear that $d\subseteq \dcl(Fa)$. Thus $d\subseteq
\dcl^{eq}(Fc)$ for some $c\in \acl^{eq}(F)$. But this implies that $d$ has
finitely many conjugates over $A$, and so $\tp(a/F)$ has finite
multiplicity.
\end{proof}

One consequence of condition (3) in the definition of iterative strongly normal extensions is the fact that $\tp(a/F)$ is {\em internal} to $\CC$: If $\tp(b/F)=\tp(a/F)$ then  $b\in K\dgen{\CC}=\dcl(F,a,\CC)$. The fact that the type has finite multiplicity makes this definability uniform, as we show next. 

\begin{lemma}\label{lem:unifdef}
If $K=F\dgen{a}^{s}$ is an iterative strongly normal extension of $F$, then there exists a function defined over $F$, let us call it $u(\cdot, \cdot)$, such that for every $b$ with $\tp(b/F)=\tp(a/F)$, there is $c \in \CC$ such that $u(a,c)=b$. \end{lemma}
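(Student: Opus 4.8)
The plan is to upgrade the pointwise internality already extracted from condition~(3) to a single $F$-definable function, using finite multiplicity (Fact~\ref{fact:mult}) to reduce matters to a stationary type, and then folding the finitely many functions so obtained into one by means of a parameter ranging over the prime field. For the pointwise statement: given $b$ with $\tp(b/F)=\tp(a/F)$, choose $\tau\in\Aut(\UU/F)$ with $\tau(a)=b$; then $\tau|_{K}$ is an $F$-embedding of $K$ into $\UU$, so condition~(3) of Definition~\ref{def:isne} gives $b=\tau(a)\in K\dgen{\CC}=\dcl(F,a,\CC)$, i.e. $b=f(a,\bar c)$ for some $F$-definable partial function $f$ and some finite tuple $\bar c$ from $\CC$ --- here both $f$ and the length of $\bar c$ depend a priori on $b$. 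Conjugating this observation by automorphisms fixing $F$, I also record that every realization of $\tp(a/F)$ lies in $\dcl(F,a',\CC)$ for \emph{any} fixed realization $a'$ of $\tp(a/F)$.

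Next I would use finite multiplicity to pass to a stationary type. By Fact~\ref{fact:mult} (and condition~(4) of Definition~\ref{def:isne} together with elimination of imaginaries), fix a finite tuple $d\in K$ with $\dcl(Fd)=F^{a}\cap K$, coding $\acl^{eq}(F)\cap\dcl^{eq}(Fa)$; then $\tp(a/Fd)$ is stationary, $\acl(Fd)=\acl(F)$ (since $d\in F^{a}=\acl(F)$), and $d$ has only finitely many $F$-conjugates $d=d_{1},\dots,d_{s}$. Since $d\in K=\dcl(Fa)$ we have $d=\delta(a)$ for some $F$-definable $\delta$, and $\delta(b)$ determines $\tp(b/\acl(F))$; in particular $\delta(b)\in\{d_{1},\dots,d_{s}\}$ for every $b\models\tp(a/F)$. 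For each $i$ pick $a_{i}\models\tp(a/F)$ with $\delta(a_{i})=d_{i}$ and set $q_{i}=\tp(a_{i}/Fd_{i})$: this is stationary, all of its realizations $b$ satisfy $\delta(b)=d_{i}$, and by the first paragraph they all lie in $\dcl(F,a_{i},\CC)$; so $q_{i}$ is internal to $\CC$ over $Fd_{i}$, witnessed by the single realization $a_{i}$.

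For such a type the internality is uniform: by a compactness argument (using stability of $SCH_{p}$ and the purity of $\CC$, Fact~\ref{lem:Cpure}) there is an $Fd_{i}$-definable function $h_{i}$ with $b=h_{i}(a_{i},\bar c)$ for every $b\models q_{i}$ and some $\bar c\in\CC$. Since $\delta$ takes the constant value $d_{i}$ on $q_{i}$, I can rewrite $h_{i}$ using $\delta(x)$ in place of the parameter $d_{i}$, and then, using $a_{i}=g_{i}(a,\bar c_{i})$ from the first paragraph ($g_{i}$ over $F$, $\bar c_{i}\in\CC$), substitute to obtain a single $F$-definable function $\tilde h_{i}$ such that every $b$ with $\delta(b)=d_{i}$ equals $\tilde h_{i}(a,\bar c)$ for some $\bar c\in\CC$. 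Finally, since $\delta(b)$ ranges over $\{d_{1},\dots,d_{s}\}$, the finitely many functions $\tilde h_{1},\dots,\tilde h_{s}$ together cover all $b\models\tp(a/F)$; padding their constant-arguments to a common length with zeros, fixing distinct tuples $\gamma_{1},\dots,\gamma_{s}$ from $\mathbb F_{p}\subseteq\CC$, and setting $u(x,(\bar y,z))=\tilde h_{i}(x,\bar y)$ when $z=\gamma_{i}$ yields the desired $F$-definable $u$: if $\delta(b)=d_{i}$ and $\tilde h_{i}(a,\bar c)=b$, then $u(a,(\bar c,\gamma_{i}))=b$ with $(\bar c,\gamma_{i})\in\CC$.

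The main obstacle is the passage from pointwise to uniform internality. Because $\CC$ is only type-definable in $\UU$, a naive compactness argument over the realizations of $\tp(a/F)$ does not bound the size of the constant witnesses, and this is exactly where finite multiplicity is needed: up to the finitely many $F$-conjugates of $d$ we are dealing with a single \emph{stationary} type, for which the standard uniform-internality machinery does apply. In writing this up I would want to verify carefully that $d$ can be chosen so that $\tp(a/Fd)$ is genuinely stationary with $\acl(Fd)=\acl(F)$, that passing from $F$ to $Fd$ introduces no new splitting, and to state precisely the form of uniform internality for a stationary type internal to the type-definable set $\CC$ in the stable theory $SCH_{p}$.
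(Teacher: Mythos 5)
Your proof follows the same route as the paper's: reduce to the stationary case via finite multiplicity, apply the standard uniform-internality result for a stationary type internal to $\CC$ (the paper cites Poizat, \emph{Stable Groups}, Theorem 2.19, p.~37 for this), and glue the finitely many resulting functions together using constant parameters. Your writeup is a careful expansion of the paper's two-line sketch, and the technical point you flag at the end --- uniform internality of a stationary type to the type-definable set $\CC$ --- is exactly what the cited Poizat theorem supplies, so you can simply invoke it rather than reconstructing the compactness argument.
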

\begin{proof}
When $p=\tp(a/F)$ is stationary, this is a standard fact (see, for instance, Theorem 2.19, p. 37, of \cite{Poizat2}). For the general case, find a function for each complete extension of $p$ to $\acl(F)$ and then glue them together. 
\end{proof}

\subsection{The Galois group is an algebraic group}

This subsection is devoted to prove the following key result:

\begin{theorem}\label{fact:definability} There is an isomorphism of groups \[\mu\colon \Gal(K/F)\to G(\CC),\] where $G$ is algebraic group in $\UU$ defined over $C_{F}$. Furthermore, the action of $\Gal(K/F)$ on $\XX= \tp(a/F)^{\UU}$ is $(F\cup\{a\})$-definable.\end{theorem}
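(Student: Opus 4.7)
The plan is to realize $\Gal(K/F)$ as the $\CC$-points of an algebraic group defined over $C_F$ by pushing the uniform definability of Lemma~\ref{lem:unifdef} down to the pure field $\CC$, in the spirit of Poizat's imaginary Galois theory. Let $u(x,y)$ be the $F$-definable function of Lemma~\ref{lem:unifdef}, with $y=(y_1,\ldots,y_k)$ ranging over $\CC^k$. I will consider the $(Fa)$-definable set $S=\{c\in\CC^k : u(a,c)\models\tp(a/F)\}$ and the $(Fa)$-definable equivalence relation $E$ on $S$ given by $cEc'$ iff $u(a,c)=u(a,c')$. Condition~(1) gives $\dcl(Fa)\cap\CC=C_K=C_F$, so by Fact~\ref{lem:Cpure} the sets $S$ and $E$ are in fact definable in the pure field $(\CC,+,\cdot)$ with parameters from $C_F$; since $C_F$ is algebraically closed and ACF eliminates imaginaries, the quotient $V:=S/E$ is a constructible subset of some $\CC^n$ defined over $C_F$.

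For each $\sigma\in\Gal(K/F)$, $\sigma(a)\models\tp(a/F)$, and Lemma~\ref{lem:unifdef} supplies a unique class $[c_\sigma]\in V$ with $\sigma(a)=u(a,c_\sigma)$; I set $\mu(\sigma)=[c_\sigma]$. Injectivity is immediate from $K=\dcl(Fa)$. Since every element of $\Gal(K/F)=\Aut_\dd(K\dgen{\CC}/F\dgen{\CC})$ fixes $\CC$ pointwise, I compute
\[(\sigma_1\circ\sigma_2)(a)=\sigma_1(u(a,c_{\sigma_2}))=u(\sigma_1(a),c_{\sigma_2})=u(u(a,c_{\sigma_1}),c_{\sigma_2}),\]
which forces the multiplication on the image $G:=\mu(\Gal(K/F))$ to be $[c_1]\cdot[c_2]=[c_3]$, where $u(a,c_3)=u(u(a,c_1),c_2)$. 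This operation is $(Fa)$-definable and, again by Fact~\ref{lem:Cpure}, descends to a map on $V$ definable over $C_F$ in $(\CC,+,\cdot)$. Because $G$ is then a constructible group in an algebraically closed field, the standard fact that any group constructible in an algebraically closed field is (the rational points of) an algebraic group produces an algebraic group $G$ defined over $C_F$ with $G(\CC)=\mu(\Gal(K/F))$, making $\mu$ the desired group isomorphism.

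Condition~(3) tells me that every $b\in\XX$ lies in $K\dgen{\CC}$ and is of the form $u(a,c_b)$ for some $[c_b]\in V$, so for $\sigma\in\Gal(K/F)$ with $\mu(\sigma)=[c]$ the same calculation gives $\sigma(b)=u(u(a,c),c_b)$, an explicit $(F\cup\{a\})$-definable formula in $[c]$ and $b$; this shows the action is $(F\cup\{a\})$-definable. The principal obstacle will be identifying the image $G=\mu(\Gal(K/F))$ as an \emph{intrinsically} definable subset of $V$, rather than only as the abstract image of the group $\Gal(K/F)$: one must verify that every $[c]\in V$ for which $u(a,c)$ realizes the same completion of $\tp(a/F)$ over $F\dgen{\CC}$ as $a$ does actually comes from an automorphism of $K\dgen{\CC}$ over $F\dgen{\CC}$. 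Here the finite multiplicity of $\tp(a/F)$ provided by Fact~\ref{fact:mult} (which rests on condition~(4) of the definition) is crucial: it renders ``$u(a,c)$ realizes the appropriate completion'' a definable condition on $c$, and so cuts out $G$ inside $V$ definably over $C_F$.
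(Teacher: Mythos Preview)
Your construction is essentially the paper's: form $S=\{c\in\CC:u(a,c)\in\XX\}$, quotient by $E$, transport the group law via $\mu$, and invoke Weil--van den Dries--Hrushovski. Two points, however, deserve correction.

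First, a minor one: $S$ is only \emph{type}-definable, since $\tp(a/F)$ is a type, not a formula; so $V=S/E$ is a priori only type-definable in $\CC$. The paper handles this by noting that $\CC$ is totally transcendental, so the resulting type-definable group is definable (and then algebraic).

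Second, and more important: the ``principal obstacle'' you flag is a phantom. You worry that $\mu$ might not be surjective onto $V$ and propose to cut out its image by the condition ``$u(a,c)$ realizes the correct completion of $\tp(a/F)$ over $F\dgen{\CC}$,'' invoking finite multiplicity to make this definable. But Lemma~\ref{lem:liftingauts} already shows that \emph{every} realization of $\tp(a/F)$ has the same type as $a$ over $F\dgen{\CC}$ and extends uniquely to an automorphism of $K\dgen{\CC}$; equivalently, Corollary~\ref{cor:prinhomsp} says $\XX$ is a principal homogeneous space for $\Gal(K/F)$. Hence $\mu$ is a bijection onto all of $V$, and there is nothing to cut out. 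Your proposed condition is vacuously satisfied on $V$, so your argument is salvageable, but the reasoning you give for it is misplaced: finite multiplicity enters only upstream, in producing the uniform function $u$ of Lemma~\ref{lem:unifdef}, not in identifying the image of $\mu$.
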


The following lemma and its corollary, both crucial in the proof of this theorem, clarify the nature of $\Gal(K/F)$ and its relation with the associated strongly normal extension. In particular, the lemma tells us that $\gal(K/F)<\Gal(K/F)$.

\begin{lemma}\label{lem:liftingauts}
Any embedding of $K$ into $\UU$ over $F$ can be {\em uniquely} extended to an automorphism of $K\langle\CC\rangle$ fixing $\CC$ pointwise.
\end{lemma}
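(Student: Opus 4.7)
The plan is to first observe that uniqueness is automatic, then construct the extension by proving linear disjointness of $K$ and $\CC$ over $C_F = C_K$. Uniqueness is immediate from the definition: $K\dgen{\CC}$ is the ID-field generated by $K\cup\CC$, so any ID-automorphism of it is determined by its restriction to $K\cup\CC$, and we are prescribing that restriction to be $\sigma$ on $K$ and the identity on $\CC$. So the real work is to produce the extension.

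For existence, the heart of the matter is a Wronskian-style claim: $K$ and $\CC$ are linearly disjoint over $C_F$. I would argue by contradiction: suppose $c_1,\ldots,c_n\in\CC$ are $C_F$-linearly independent and $k_1,\ldots,k_n\in K$, not all zero, satisfy $\sum_i k_i c_i = 0$, with $n$ minimal. Normalize $k_1=1$, so that $\dd_j(k_1)=0$ for $j\geq 1$. For each $j\geq 1$, applying $\dd_j$ and using the iterative Leibniz rule $\dd_j(xy)=\sum_{r+s=j}\dd_r(x)\dd_s(y)$ together with $\dd_s(c_i)=0$ for $s\geq 1$, one obtains $\sum_{i\geq 2}\dd_j(k_i)c_i=0$. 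Minimality of $n$ forces $\dd_j(k_i)=0$ for every $j\geq 1$ and every $i\geq 2$, so $k_i\in C_K = C_F$; then $\sum_i k_i c_i = 0$ becomes a nontrivial $C_F$-linear dependence among the $c_i$'s, contradicting their choice.

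With linear disjointness in hand, the map $\sigma\otimes\mathrm{id}_\CC$ yields a well-defined ring homomorphism $K\cdot\CC\to\UU$, extending uniquely to a field homomorphism $\tilde\sigma\colon K(\CC)\to\UU$. Because $\dd_l$ kills $\CC$ for $l\geq 1$ and preserves $K$, the iterative Leibniz rule shows that the ring $K\cdot\CC$ is already closed under each $\dd_l$, hence so is $K(\CC)$, and therefore $K(\CC) = K\dgen{\CC}$. The same computation shows that $\tilde\sigma$ commutes with every $\dd_l$ on $K\cup\CC$ and hence on $K\dgen{\CC}$, so $\tilde\sigma$ is an ID-field embedding.

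To finish, I would verify that $\tilde\sigma$ is surjective onto $K\dgen{\CC}$. Applying condition (3) of Definition \ref{def:isne} to the strongly normal extension $\sigma(K)/F$ (isomorphic to $K/F$ via $\sigma$) and to the $F$-embedding $\sigma^{-1}\colon\sigma(K)\hookrightarrow\UU$, we get $K\subseteq\sigma(K)\dgen{\CC} = \tilde\sigma(K\dgen{\CC})$; combined with $\CC\subseteq\tilde\sigma(K\dgen{\CC})$, this forces $\tilde\sigma(K\dgen{\CC}) = K\dgen{\CC}$. The main technical obstacle is the linear-disjointness step, where the iterative structure of $\dd$ and the equality $C_K = C_F$ are both essential; once it is in place, the rest is essentially bookkeeping.
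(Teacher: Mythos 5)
Your proof is correct, but it takes a genuinely different route from the paper's. The paper stays entirely in the model-theoretic register: it reduces the lemma to showing that $\tp(a/F) = \tp(a'/F)$ implies $\tp(a/F\dgen{\CC}) = \tp(a'/F\dgen{\CC})$, which it gets by observing that $\tp(aF/C_F)$ and $\tp(a'F/C_F)$ are equal and stationary (as $C_F$ is algebraically closed) and that $\tp(aF/\CC)$, $\tp(a'F/\CC)$ are their unique nonforking extensions; homogeneity of $\UU$ then supplies an automorphism $\bar\sigma$ of $\UU$ fixing $F\dgen{\CC}$ whose restriction to $K\dgen{\CC}$ is the desired extension. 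You instead give a hands-on differential-algebraic argument: a Wronskian-style induction establishes linear disjointness of $K$ and $\CC$ over $C_F = C_K$, the extension is built explicitly as $\sigma\otimes\mathrm{id}_\CC$ on $K\otimes_{C_F}\CC \cong K\cdot\CC$, and the Hasse--Schmidt Leibniz rule shows this is an ID-map and that $K(\CC) = K\dgen{\CC}$. Both arguments are ultimately secured by the same independence phenomenon (the differential field is free from the ambient constants over its own constants), but yours makes it explicit and elementary, which is a real virtue: the paper's appeal to stationarity and nonforking over $C_F$ is stated rather tersely, whereas your linear-disjointness computation is self-contained and transparent. What the paper's approach buys is a one-step passage to an ambient automorphism $\bar\sigma$ of $\UU$, which makes the surjectivity of the restriction to $K\dgen{\CC}$ immediate once one notes $\sigma(K)\dgen{\CC} = K\dgen{\CC}$.

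One step in your write-up deserves a touch more care: you invoke condition (3) for ``the strongly normal extension $\sigma(K)/F$,'' but the fact that $\sigma(K)/F$ is strongly normal is not purely a statement about the abstract isomorphism type of the extension --- condition (3) quantifies over embeddings into the fixed monster $\UU$ and refers to $\sigma(K)\dgen{\CC}$ computed inside $\UU$. The clean justification is that $\sigma$, being an ID-embedding over $F$, is elementary by quantifier elimination, hence extends by saturation to some $\psi \in \Aut(\UU/F)$ with $\psi|_K = \sigma$; conjugating by $\psi$ then transports each clause of Definition \ref{def:isne} from $K/F$ to $\sigma(K)/F$. (Alternatively, and perhaps more in the spirit of your argument: with this $\psi$ in hand, $\psi$ fixes $F$, permutes $\XX$ and $\CC$ setwise, and since $K\dgen{\CC} = \dgen{F \cup \XX \cup \CC}$ by condition (3), one gets $\psi(K\dgen{\CC}) = K\dgen{\CC}$ directly, so $\sigma(K)\dgen{\CC} = K\dgen{\CC}$ without a separate appeal to condition (3) for $\sigma(K)/F$.) With that gap filled, your proof goes through.
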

\begin{proof} It is enough to show that for any $a'\in \UU$ such that $\tp(a/F)=\tp(a'/F)$, we have $\tp(a/F\dgen{\CC})=\tp(a'/F\dgen{\CC})$. To see this, take $\sigma\colon K\to \UU$ an embedding of ID-fields fixing $F$. Since $\tp(a/F)=\tp(\sigma(a)/F)$, our assumption tells us that  $\tp(a/F\dgen{\CC})=\tp(\sigma(a)/F\dgen{\CC})$ and this, by homogeneity of $\UU$ and stability of the theory, provides us with $\bar\sigma$, a $\UU$-automorphism fixing $F\dgen{\CC}$ and taking $a$ to $a'$. Note that $\bar\sigma|_{K\dgen{\CC}}\colon K\dgen{\CC}\to\sigma(K)\dgen{\CC}$. Note also that $\sigma(K)\dgen{\CC}=K\dgen{\CC}$.

%The restriction of this $\UU$-automorphism to $ when restricted to $K\dgen{\CC}$ , becomes a unique function because the action on this set is already determined by $\sigma$ and the fact that the new function fixes $\CC$. This new function, that we will call $\bar\sigma$, takes $K\dgen{\CC}$ isomorphically onto $\sigma(K)\dgen{\CC}$. 

%To finish the claim, let us prove that $\sigma(K)\dgen{\CC}=K\dgen{\CC}$. One containment ($\subseteq$) is a direct consequence of the third condition of our definition of iterative strongly normal extensions. For the other way around, note that we can substitute $a$ for any conjugate in the statement of lemma \ref{lem:unifdef}; that is, for any $a$ there exists $c\in \CC$ such that $u(\sigma(a),c)=a$, and so $K\subseteq \sigma(K)\dgen{\CC}$. 

%In order to prove the claim, let $d$ be a (possibly infinite) tuple from $K$. Since $\CC$ is algebraically closed, $tp(d/\CC)$ is stationary. Moreover, because $\CC$ is type definable over the empty set, its canonical base is contained in $\CC\cap \dcl(d)\subseteq C_{K}(=C_{F})$. Thus, for any such $d$, the type $tp(d/\CC)$ is definable over $C_{F}$.

In order to prove the claim, consider the infinite tuples $aF$ and $a'F$, where $\tp(a'/F)=\tp(a/F)$. Since $\CC$ is algebraically closed and type-definable over the empty set, $\tp(aF/\CC)$ and $\tp(a'F/\CC)$ are, respectively, the unique nonforking extensions of $\tp(aF/C_F)$ and $\tp(a'F/C_F)$. However, $\tp(aF/C_F)$ and $\tp(a'F/C_F)$ are equal, and, in consequence, the same is true about  $\tp(aF/\CC)$ and $\tp(a'F/\CC)$. \end{proof}

\begin{corollary}\label{cor:prinhomsp}
$\XX$, the set of realisations of $\tp(a/F)$ in $\UU$, is a principal homogeneous space for $\Gal(K/F)$.
\end{corollary}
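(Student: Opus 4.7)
The plan is to deduce the corollary directly from Lemma \ref{lem:liftingauts} combined with condition (3) of Definition \ref{def:isne}. I need to check three things: that $\Gal(K/F)$ acts on $\XX$, that the action is transitive, and that it is free.

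First I would show $\XX \subseteq K\dgen{\CC}$, so that the action is well-defined. Given $a' \in \XX$, saturation of $\UU$ yields a $\UU$-automorphism $\tau$ over $F$ with $\tau(a) = a'$; its restriction $\tau|_K$ is an embedding of $K$ into $\UU$ over $F$, so by Definition \ref{def:isne}(3), $a' = \tau(a) \in \tau(K) \subseteq K\dgen{\CC}$. Conversely, any $\sigma \in \Gal(K/F)$ fixes $F \subseteq F\dgen{\CC}$ pointwise, so it carries realisations of $\tp(a/F)$ to realisations of the same type; hence $\sigma$ maps $\XX$ into $\XX$.

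For transitivity, given $a' \in \XX$ and $\tau$ as above, Lemma \ref{lem:liftingauts} extends $\tau|_K$ uniquely to an automorphism $\bar\tau$ of $K\dgen{\CC}$ fixing $\CC$ pointwise. Since $\bar\tau$ also fixes $F$, it fixes $F\dgen{\CC}$, so $\bar\tau \in \Gal(K/F)$ with $\bar\tau(a) = a'$. For freeness, suppose $\sigma \in \Gal(K/F)$ fixes $a$. Then $\sigma$ fixes the ID-subfield $F\dgen{a}$, and uniqueness of $p$-th roots in characteristic $p$ forces $\sigma$ to fix its strict closure $F\dgen{a}^s = K$ as well; hence $\sigma|_K = \mathrm{id}_K$, and by the uniqueness clause in Lemma \ref{lem:liftingauts}, $\sigma = \mathrm{id}_{K\dgen{\CC}}$.

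The only subtle point is the passage from fixing $F\dgen{a}$ to fixing its strict closure, which is routine given that $p$-th roots are unique in characteristic $p$. I do not anticipate any real obstacle: the corollary is essentially a repackaging of Lemma \ref{lem:liftingauts} once condition (3) is invoked to confine $\XX$ inside $K\dgen{\CC}$.
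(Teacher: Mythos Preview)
Your proposal is correct and follows exactly the approach of the paper: the paper's proof simply states that $\XX\subset K\dgen{\CC}$ (so $\Gal(K/F)$ acts on $\XX$) and that transitivity and freeness are direct consequences of Lemma \ref{lem:liftingauts}. You have merely spelled out the details the paper leaves implicit, including the routine passage from fixing $F\dgen{a}$ to fixing its strict closure.
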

\begin{proof}
As $\XX\subset K\dgen{\CC}$, then $\Gal(K/F)$ acts on $\XX$. The fact that the action on $\XX$ is transitive and free is a direct consequence of the previous lemma. 

%\ref{lem:liftingauts}: Let $a'$ and $a''\in \XX$. Since they have the same type over $F$, let $\sigma$ be an automorphism of $\UU$ fixing $F$ that takes $a'$ to $a''$. The lemma then tells us that the restriction of $\sigma$ to $K\dgen{\CC}$ is an element of $\Gal(K/F)$ and, moreover, that this restriction is unique. This is, for $a'$ and $a''$ there is only one $\sigma\in \Gal(K/F)$ taking $a'$ to $a''$.
\end{proof}

\begin{proof}[Proof of theorem \ref{fact:definability}]
This is a modified version of the general argument for proving the definability of the binding group. 

Let  $Y=Z/E$ where $Z=\{c\in \CC\ \colon\ u(a,c)\in\XX\}$ and $E$ is an equivalence relation on $Z$ defined by the formula $u(a,x_{1})=u(a,x_{2})$. Because of elimination of imaginaries of the theory of algebraically closed fields and the pureness of $\CC$, $Y$ is a type-definable set in $\CC$ over $dcl(a)\cap \CC\subset C_{F}$. 

For $b\in \XX$ and $d\in Y$, define $f(b,d)=u(b,c)$ with $c\in Y_{0}$ such that $c/E=d$, and note that  for any $b_{1}$ and $b_{2}\in \XX$, there is only one $d\in Y$ such that $f(b_{1},d)=b_{2}$. 

Consider the function $\mu\colon \Gal(K/F) \to Y\colon \sigma\mapsto h(a,\sigma(a))$. Corollary \ref{cor:prinhomsp} then tells us that $\mu$ is a bijection. Endow $Y$ with the group operation induced by $\mu$. This is, let us define $d\cdot d'=\mu(\mu^{-1}(d)\cdot\mu^{-1}(d'))$. Note that this group operation is definable. Moreover, the induced action of $Y$ on $\XX$ turns out to be $F\cup{a}$-definable. 

Finally, the fact that $\CC$ is totally transcendental plus the Weil-Van den Dries-Hrushovski theorem (See Theorem 4.13, p. 84 of \cite{Poizat2}, or \cite{Lou}) tells us that $(Y,\cdot)$ is definably isomorphic to the set of $\CC$-rational points of an algebraic group $G$ defined over $C_F$. Identify $G(\CC)$ and $Y$.
\end{proof} 

In addition, $\mu$ takes $\gal(K/F)$ to the $C_F$-rational
points of $G$.
\begin{fact}$\mu(\gal(K/F))=G(C_F)$\end{fact}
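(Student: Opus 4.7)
The plan is to verify the two inclusions separately. By Lemma \ref{lem:liftingauts} an element $\sigma \in \Gal(K/F)$ lies in $\gal(K/F)$ precisely when $\sigma(K) \subseteq K$, and since $K = F\dgen{a}^s$ this is equivalent to the single condition $\sigma(a) \in K$. From the construction of $\mu$ in the proof of Theorem \ref{fact:definability}, $\mu(\sigma)$ is the unique $d \in Y$ with $f(a,d) = \sigma(a)$; equivalently, picking any $c \in \CC$ with $u(a,c) = \sigma(a)$ (which exists by Lemma \ref{lem:unifdef}), we have $\mu(\sigma) = c/E$.

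For the inclusion $\mu(\gal(K/F)) \subseteq G(C_F)$, I would assume $\sigma(a) \in K$ and use the fact that $c/E$ is determined by the pair $(a,\sigma(a))$ through the $F$-definable relation $u(a,x_1) = u(a,x_2)$, so $c/E \in \dcl^{eq}(K)$. Applying elimination of imaginaries in the pure algebraically closed field $\CC$ together with Fact \ref{lem:Cpure}, the imaginary $c/E$ is interdefinable with a real tuple in $\CC$; that tuple lies in $\dcl(K) \cap \CC$, and since $K$ is definably closed this forces it into $K \cap \CC = C_K$, which equals $C_F$ by condition (1) of Definition \ref{def:isne}. Hence $\mu(\sigma) \in G(C_F)$.

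For the reverse inclusion, I would pick $g \in G(C_F)$, set $\sigma = \mu^{-1}(g) \in \Gal(K/F)$, and observe that since $u$ is $F$-definable and $E$ is an $F$-definable equivalence relation, the function $f$ is also $F$-definable in its two variables; therefore $\sigma(a) = f(a,g) \in \dcl(F,a,g) \subseteq \dcl(Fa) = K$, because $g \in C_F \subseteq F$. Thus $\sigma(a) \in K$ and so $\sigma \in \gal(K/F)$, as required.

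The main obstacle I anticipate is the imaginaries-handling step in the forward direction: one has to combine elimination of imaginaries in $\CC$ with the purity/stable-embeddedness statement of Fact \ref{lem:Cpure} in order to legitimately regard $\mu(\sigma)$ as a tuple of honest elements of $\CC$, so that under the hypothesis $\sigma(a) \in K$ it can be pinned down inside $K \cap \CC = C_F$. Everything else should follow from $F$-definability of the data defining $\mu$.
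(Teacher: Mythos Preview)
Your proposal is correct and follows essentially the same route as the paper: both directions rest on the interdefinability of $\sigma(a)$ and $\mu(\sigma)$ over $Fa$, so that $\sigma(a)\in K$ iff $\mu(\sigma)\in K\cap\CC=C_F$. Your anticipated ``obstacle'' about imaginaries is not really an issue, since in the proof of Theorem~\ref{fact:definability} the quotient $Y=Z/E$ was already identified (via elimination of imaginaries in the pure algebraically closed field $\CC$) with a $C_F$-definable subset of a power of $\CC$ and then with $G(\CC)$; the paper simply uses this identification tacitly to write $\mu(\sigma)\in F\dgen{a,\sigma(a)}^{s}\cap\CC$ and $\sigma(a)\in F\dgen{a,\mu(\sigma)}^{s}$ directly.
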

\begin{proof}
First, observe that for any $\sigma\in \Gal(K/F)$, we have that $\sigma(a)\in F\dgen{a,\mu(\sigma)}^{s}$ and $\mu(\sigma)\in F\dgen{a,\sigma(a)}^{s}\cap \CC$. Now, if $\sigma(a)\in K$, then $\mu(\sigma)\in F\dgen{a}^{s}\cap \CC=C_K=C_F$. 

On the other hand, if $\mu(\sigma)\in C_F$, then, $\sigma(a)\in F\dgen{a}^{s}=K$. \end{proof}

\subsection{Scaffolding}

In characteristic zero, the model theoretic approach to differential Galois theory \cite{Anand3} heavily depends on the existence of prime models, a basic consequence of the fact that $DCF_{0}$ is totally transcendental. Since $SCH_p$ is only stable and not even superstable, we need to rely in other tools to deal with the lack of decent {\em differential closure}. Just like in the linear case \cite{Anand2}, the use of a suitable auxiliary structure as a scaffolding to handle the group inside $\UU$ will do the trick. 

\begin{definition}Let $K/F$ be an iterative strongly normal extension, with $K=F\dgen{a}^{s}$. Now define $\MM$ as the two-sorted structure $(\XX,\CC)$, with relations induced by $F$-definable relations in $\UU$. \end{definition}

\begin{fact}\label{fact:biinterp}
Let $\NN$ be the structure whose universe is $\CC$, with relations induced by the $F\dgen{a}^{s}$-definable sets in $\UU$. Then $(\MM, a)$ is bi-interpretable with $\NN$. 
\end{fact}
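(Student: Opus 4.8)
The plan is to observe that naming the base point $a$ collapses the internality set $\XX$ -- which by Corollary \ref{cor:prinhomsp} is a priori only a torsor under $\Gal(K/F)$ -- onto a quotient of a definable subset of $\CC$, so that $(\MM,a)$ and $\NN$ each become, up to a definable translation, the structure carried by $\CC$ when equipped with all $F\dgen{a}^{s}$-definable relations. Concretely, recall from Lemma \ref{lem:unifdef} the $F$-definable function $u$ for which $u(a,\cdot)$ maps $\CC$ onto $\XX$, and from the proof of Theorem \ref{fact:definability} the set $Z=\{c\in\CC\colon u(a,c)\in\XX\}$ and the equivalence relation $E$ on $Z$ given by $u(a,x_{1})=u(a,x_{2})$, so that $b\mapsto[\,u(a,\cdot)^{-1}(b)\,]_{E}$ is a bijection from $\XX$ onto $Z/E$. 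Since $u$ is over $F$ and $a\in K$, both $Z$ and $E$ are $K=F\dgen{a}^{s}=\dcl(Fa)$-definable, and hence, by elimination of imaginaries in $\CC$, $Z/E$ is in definable bijection with a $K$-definable subset of a power of $\CC$.

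To interpret $\NN$ in $(\MM,a)$: its universe $\CC$ is already a sort of $\MM$, so only the relations are at issue. A $K$-definable $S\subseteq\CC^{m}$ is defined in $\UU$ by some $\varphi(\bar y,g(a))$ with $g$ an $F$-definable function (using $K=\dcl(Fa)$); then $\chi(\bar y,\bar z):=\varphi(\bar y,g(\bar z))$ is over $F$, the set $R_{\chi}:=\{(b,\bar c)\in\XX\times\CC^{m}\colon\UU\models\chi(\bar c,b)\}$ is one of the relations of $\MM$, and $S$ is its fibre over $a$, hence definable in $(\MM,a)$. This produces all the relations of $\NN$.

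To interpret $(\MM,a)$ in $\NN$: take $Z/E$, together with $\CC$ itself, as the interpreting universes, which is legitimate since $Z$ and $E$ are among the relations of $\NN$ by the first paragraph; interpret the distinguished element $a$ as the $E$-class of a $c_{0}$ with $u(a,c_{0})=a$, noting that $\{c\in Z\colon u(a,c)=a\}$ is again a relation of $\NN$; and for any relation $R\subseteq\XX^{k}\times\CC^{\ell}$ of $\MM$, its pullback $\{(\bar c,\bar e)\colon R(u(a,c_{1}),\dots,u(a,c_{k}),\bar e)\}$ along $u(a,\cdot)$ is $K$-definable, hence a relation of $\NN$, and descends to $(Z/E)^{k}\times\CC^{\ell}$; this produces all the relations of $(\MM,a)$.

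Finally I would check that the two composite interpretations are \emph{definable isomorphisms}, which is the most delicate point although I do not expect a genuine obstacle: over $\CC$ both round-trips are the identity, and on the $\XX$-sort the comparison map is precisely $b\mapsto[\,u(a,\cdot)^{-1}(b)\,]_{E}$, whose graph is cut out in $\XX\times(Z/E)$ by the condition $u(a,c)=b$ -- an $F$-definable condition with the parameter $a$, hence definable in $(\MM,a)$ -- and which is a bijection by Lemma \ref{lem:unifdef}. The only thing needing a word of care is that $\XX$ is type-definable rather than definable over $F$, which is harmless since throughout we invoke only honestly definable relations restricted to the sorts $\XX$ and $\CC$; the substance of the argument -- that naming $a$ picks a base point of the $\Gal(K/F)$-torsor $\XX$ and thereby identifies it with a $K$-definable piece of $\CC$ -- is already packaged in Corollary \ref{cor:prinhomsp} and the construction in the proof of Theorem \ref{fact:definability}.
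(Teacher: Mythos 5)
Your argument is correct and follows the same route as the paper's proof: one direction (interpreting $\NN$ in $(\MM,a)$) is the observation that intersecting $\CC$ with a $K$-definable set yields, after substituting a variable over $\XX$ for the parameter $a$, an $a$-definable relation of $\MM$; the other direction uses the very same $Z$, $E$, and the bijection $\XX \to Z/E$ given by $b \mapsto [u(a,\cdot)^{-1}(b)]_E$ drawn from the proof of Theorem~\ref{fact:definability}. Your version merely spells out the two interpretations and the round-trip isomorphisms more explicitly than the paper, which states them tersely.
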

\begin{proof}
On one hand, any intersection of $\CC$ and a $F\dgen{a}^{s}$-definable set in $\UU$ is, inside $\MM$, an $a$-definable set. The other direction depends on the definability of the Galois group from the previous section. 

As in the proof of theorem \ref{fact:definability}, let us define $Y$ as the quotient of $Z$ by $E$, where $Z$ is the type-definable set $\{c\in \CC\ \colon\ u(a,c)\in\XX\}$ and $E$ is given by the formula $u(a,x_{1})=u(a,x_{2})$. By the proof of theorem \ref{fact:definability} we know that $Y$ is a $C_{F}$-definable set in $\CC$.

Note that $\XX$ and $Y$ are isomorphic. Indeed, for each $b\in \XX$ assign the class $\bar{c}_{b}$ of $c\in\CC$ such that $u(a,c)=b$. This map is one-to-one and onto by construction. Now, suppose that you have $D\subset \MM\cap\XX$ definable in $(\MM,a)$. By definition, $D$ is $F\dgen{a}^{s}$-definable in $\UU\cap \XX$. The map given between $\XX$ and $Y$ is also $F\dgen{a}^{s}$-definable; thus the image of $D$, now inside the quotient set, is also $F\dgen{a}^{s}$-definable. This makes $D$ definable inside $\NN$. 
\end{proof}
Now we can check that, although we are working in a stable, non-superstable theory, the auxiliary structure we built is totally transcendental.
\begin{fact}\label{fact:M}
$\MM$ is saturated and its theory $\Th(\MM)$ has quantifier elimination and is totally transcendental. 
\end{fact}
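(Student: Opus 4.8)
The plan is to transfer all three properties from $\NN$ (equivalently, from the pure algebraically closed field $\CC$ with some extra $C_F$-definable structure) to $\MM$ via the bi-interpretability established in Fact \ref{fact:biinterp}, and then observe that the named parameter $a$ does not cost us anything since it lies in a totally transcendental, saturated sort. First I would note that $\NN$, being $\CC$ equipped only with relations that are already definable in the pure field $\CC$ (by Fact \ref{lem:Cpure}, intersections of $F\dgen{a}^s$-definable sets with $\CC$ are definable over $\dcl(a)\cap\CC\subseteq C_F$ in the ring language), is just a reduct-expansion of an algebraically closed field by $C_F$-definable structure; hence $\Th(\NN)$ is totally transcendental, and $\NN$ is $|\UU|$-saturated because $\CC$ is (constants of a saturated model of $SCH_p$ form a saturated ACF). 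Total transcendence and quantifier elimination are preserved under bi-interpretability (the former because Morley rank is invariant under interpretation up to finite factors, the latter because one can push quantifier-free formulas back and forth through the interpreting functions, absorbing the finitely many parameters), and saturation passes to any structure interpreted in a saturated structure over a small parameter set; this gives all three conclusions for $(\MM,a)$.

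The remaining point is to remove the constant $a$, i.e. to deduce the statement for $\MM$ itself from the statement for $(\MM,a)$. For total transcendence this is immediate: adding one parameter cannot change whether a theory is totally transcendental, since $\Th(\MM)$ is totally transcendental iff every model has ordinal-valued Morley rank on all formulas, and naming finitely many elements only shifts ranks by finite amounts. For saturation, $\MM$ is a reduct of $(\MM,a)$ with the same universe, so it is saturated as soon as $(\MM,a)$ is. For quantifier elimination, I would argue directly: by Fact \ref{fact:biinterp} every $(\MM,a)$-definable set is $\NN$-definable hence (over $C_F\cup\{a\}$) quantifier-free definable in the field language on $\CC$ and correspondingly on $\XX$ via $u(a,\cdot)$; since $\XX=\tp(a/F)^\UU$ is itself $\emptyset$-type-definable in $\MM$ and the defining relations of $\MM$ are the $F$-definable ones, one checks that the isomorphism $\XX\cong Y\subseteq\CC$ together with the field structure is already quantifier-free definable in $\MM$ without the parameter $a$, because $a$ only enters through the $F$-definable function $u$ and $F\subseteq\dcl(\emptyset)$ in the two-sorted language $\MM$ (the relations of $\MM$ are indexed by $F$-definable relations, so $F$-parameters are built in). Alternatively, and more cleanly, I would invoke that a structure whose theory is $\aleph_0$-stable and which has a saturated model admits quantifier elimination after naming a model, and then check that $\MM$ already has enough $\emptyset$-definable structure (namely, all $F$-induced relations, and $F$ contains a copy of the prime field together with the Galois action) to eliminate quantifiers outright.

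The main obstacle I expect is the last one: showing quantifier elimination for $\MM$ rather than merely for $(\MM,a)$, because bi-interpretability is stated with the parameter $a$, and a priori dropping $a$ could destroy quantifier elimination even though it preserves stability and saturation. The resolution is that the two-sorted language of $\MM$ already carries a symbol for every $F$-definable relation of $\UU$ restricted to $\XX\cup\CC$; in particular it carries symbols for the graph of $u$ and for the $C_F$-definable group operation on $Y$ constructed in the proof of Theorem \ref{fact:definability}. Thus every quantifier-free $(\MM,a)$-formula is already a quantifier-free $\MM$-formula once we substitute for $a$ using that $a$ is the "generic" point of $\XX$ and that the relevant data ($u$, $E$, the group law) are $\emptyset$-definable in $\MM$. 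So quantifier elimination for $(\MM,a)$ yields quantifier elimination for $\MM$. The remaining verifications — that the field operations on the $\CC$-sort, pulled over to the $\XX$-sort, are among the distinguished relations, and that the $\emptyset$-definable closure in $\MM$ contains everything needed — are routine bookkeeping against the definitions in Fact \ref{fact:biinterp}.
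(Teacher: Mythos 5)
Your handling of saturation and total transcendence matches the paper's: you transfer both from $\NN$ (which is $\CC$ with names from $C_F$, hence saturated and totally transcendental) through the bi-interpretability of Fact~\ref{fact:biinterp}, and both properties do survive reducts and interpretations. That part is fine.

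The quantifier-elimination argument is where there is a real gap. You correctly sense the problem --- bi-interpretability does not preserve quantifier elimination, since it is a language-sensitive notion, and Fact~\ref{fact:biinterp} is stated with the parameter $a$ --- but the proposed resolution does not close it. Saying that ``every quantifier-free $(\MM,a)$-formula is already a quantifier-free $\MM$-formula once we substitute for $a$'' is not a valid move: $a$ is a particular element of $\XX$, not something you can eliminate by substitution, and the parameter genuinely occurs in the formulas produced by the interpretation (through $u(a,\cdot)$ and the quotient by $E$). The fallback suggestion, that $\omega$-stability plus a saturated model gives QE after naming a model, is again only QE with parameters, and the ``routine bookkeeping'' you defer to is precisely the missing argument.

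The paper sidesteps this entirely and proves QE for $\MM$ directly, without going through $\NN$ or $a$ at all. The key observation is that the language of $\MM$ by construction has a relation symbol for \emph{every} $F$-definable relation of $\UU$ restricted to $\XX\cup\CC$. Consequently two finite tuples $d_1,d_2$ from $\MM$ with the same quantifier-free type in $\MM$ already have the same type over $F$ in $\UU$. By homogeneity of $\UU$ there is an automorphism of $\UU$ over $F$ sending $d_1$ to $d_2$; since it fixes $F$ it preserves $\XX$ and $\CC$ and so restricts to an automorphism of $\MM$. Thus $\MM$ is quantifier-free homogeneous, and since you have already shown it is saturated, the standard criterion (saturated $+$ quantifier-free homogeneous $\Rightarrow$ QE) finishes the proof. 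This is both shorter and avoids the parameter-removal difficulty that derails your version; I'd recommend replacing the bi-interpretability route for QE with this direct homogeneity argument.
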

\begin{proof}
Let $\NN$ be just as in fact \ref{fact:biinterp}. 

Since $\NN$ can be seen as $\CC$ with names for the elements of $C_F$, then it is saturated and totally transcendental. The fact that being totally transcendental and saturation are preserved under taking reducts and interpretability, allow us to conclude that $\MM$ is also saturated and totally transcendental.

Since $\MM$ is saturated, for quantifier elimination it is enough to prove that it is also quantifier-free homogeneous. Indeed, if $d_{1}$ and $d_{2}$ are two finite tuples from $\MM$ with the same quantifier-free type, then, seeing them as tuples from $\UU$, they have the same type over $F$. The homogeneity of $\UU$ then provide us with an automorphism of $\UU$ over $F$ taking one to the other. As it fixes $F$, this function is also an automorphism of $\MM$ when restricted to its domain.\end{proof}

Given that $\Th(\MM)$ is totally transcendental, let $\MM_{0}$ be its prime model over the empty set.  This structure will play the role of the differential closure of $F$.  

\begin{lemma}\label{lem:primeM} \[\MM_{0}\cap \CC=C_{F}\]\end{lemma}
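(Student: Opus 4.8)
The plan is to prove the two inclusions $M_0\cap\CC\supseteq C_F$ and $M_0\cap\CC\subseteq C_F$ separately, the first being essentially trivial and the second being the real content. For $\supseteq$: the structure $\MM$ carries names for all $F$-definable relations of $\UU$, and in particular each element of $C_F$ (which lies in $\CC$) is named, hence lies in $\dcl^{eq}(\emptyset)$ computed in $\MM$; since any prime model contains $\dcl^{eq}(\emptyset)$, we get $C_F\subseteq M_0$, and as $C_F\subseteq\CC$ this gives the inclusion. (More carefully, one should observe that $C_F$ as a subset of the sort $\CC$ of $\MM$ is a union of singletons each named by a constant symbol, so no saturation is needed here.)

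For the nontrivial inclusion $M_0\cap\CC\subseteq C_F$, the plan is to transport the question to the auxiliary structure $\NN$ of Fact \ref{fact:biinterp} and use the bi-interpretability together with Theorem \ref{fact:definability}. The key observation is that $M_0\cap\CC$, being contained in the prime model $\MM_0$ over $\emptyset$, is a set of atomic points: every element $c\in M_0\cap\CC$ realizes an isolated type over $\emptyset$ in $\Th(\MM)$. Via the bi-interpretation $(\MM,a)\sim\NN$, and recalling that $\NN$ is just $\CC$ with constants for $C_F$ (so that $\dcl^{eq}_{\NN}(\emptyset)$, computed inside the pure algebraically closed field $\CC$ with those constants, is exactly $C_F$ since $C_F$ is already algebraically closed), the atomicity forces $c$ to be $\emptyset$-definable in $\NN$, hence $c\in C_F$. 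Concretely, I would argue: suppose $c\in M_0\cap\CC$ but $c\notin C_F$; then $\tp^{\NN}(c/\emptyset)=\tp^{\CC}(c/C_F)$ is nonalgebraic (as $C_F=C_F^a$ inside $\CC$), so by $\omega$-stability of $\CC$ it has infinitely many realizations, and being a complete type over the definable-closure it is not isolated — but bi-interpretability carries isolated types to isolated types (up to the interpretation, and after adding the parameter $a$, which does not hurt because the prime model $\MM_0$ realizes only isolated types over $\emptyset$ and $c$'s type over $a$ inside $\MM$ pulls back to a type over $\emptyset$ in $\NN$), contradicting that $c$ is atomic in $\MM$.

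The main obstacle I expect is bookkeeping the parameter $a$ correctly: the bi-interpretation of Fact \ref{fact:biinterp} is between $\NN$ and $(\MM,a)$, not $\MM$ itself, so I must be careful that the image of $M_0\cap\CC$ under the interpretation consists of points atomic over $\emptyset$ and not merely over $a$. The right way around this is to note that in $\NN$, which already has all of $C_F$ named, a point of $\CC$ is atomic over $\emptyset$ if and only if it is algebraic over $\emptyset$ if and only if it lies in $C_F$ (here totally transcendental plus the field structure of $\CC$ does the work), and that the interpretation sends $\MM_0$, the prime model of $\Th(\MM)$, into the prime model of $\Th(\NN)$ — which, since $\Th(\NN)$ has names for $C_F$ and $\CC$ is strongly minimal over $C_F$ with $C_F$ algebraically closed, is exactly $C_F$ itself. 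Thus $M_0\cap\CC$ maps into $C_F$, and unwinding the interpretation (which fixes $\CC$ pointwise on the relevant sort) yields $M_0\cap\CC\subseteq C_F$. I would present this last normalization as the crux, flagging that once one knows the prime model of $\Th(\NN)$ is literally $C_F$, the lemma is immediate.
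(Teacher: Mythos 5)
Your proof is correct in outline but takes a genuinely different route from the paper's. The paper argues directly: given $c\in\MM_0\cap\CC$, its type over $\emptyset$ in $\MM$ is isolated by a formula $\phi$, which by quantifier elimination (Fact~\ref{fact:M}) is quantifier-free, so $\phi(\CC)$ is the trace on $\CC$ of an $F$-definable set in $\UU$; Fact~\ref{lem:Cpure} rewrites $\phi(\CC)$ in the pure ring language over $C_F$, and the elementary inclusion $C_F\prec\CC$ (both algebraically closed) produces some $c'\in C_F\cap\phi(\CC)$, whence the isolated type is the named type $x=c'$ and $c=c'$. You instead route through the bi-interpretation of Fact~\ref{fact:biinterp}, transporting isolation of $c$'s type from $\MM$ to $\NN\cong(\CC,+,\cdot,(c)_{c\in C_F})$, where the only isolated $1$-types are the algebraic ones. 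Both arguments ultimately rest on purity of $\CC$ and on $C_F$ being algebraically closed; the paper's version is shorter precisely because it never has to manage the parameter $a$, which is what makes your version heavier. (You also record the trivial reverse inclusion $C_F\subseteq\MM_0$, which the paper leaves implicit.)

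The one place you should not leave to hand-waving is the parameter bookkeeping that you yourself flag. Fact~\ref{fact:biinterp} bi-interprets $\NN$ with $(\MM,a)$, not $\MM$, so transporting isolation requires that $\tp^\MM(c/a)$ be isolated, and for the original $a$ (which need not lie in $\MM_0$) this does not follow from isolation of $\tp^\MM(c/\emptyset)$. The needed patch is to replace $a$ by some $a_0\in\MM_0\cap\XX$: such an $a_0$ exists because $\XX$ is a single complete (hence isolated) $\emptyset$-type in $\MM$ by quantifier elimination, so the prime model meets $\XX$; atomicity of $\MM_0$ over $\emptyset$ then gives isolation of $\tp^\MM(ca_0/\emptyset)$, hence of $\tp^\MM(c/a_0)$, and $(\MM,a_0)\cong(\MM,a)$ over $\emptyset$ by homogeneity so the bi-interpretation still applies with $a_0$ in place of $a$. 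Separately, the justification ``nonalgebraic, hence infinitely many realizations, hence not isolated'' is not the right reason; in $\NN$ the transcendental $1$-type over $C_F$ fails to be isolated because any formula it contains is cofinite in $\CC$ and so is realized by (many) elements of $C_F$, which realize algebraic types instead. With these two points tightened, your argument goes through.
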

\begin{proof} Let $c\in \MM_{0}\cap \CC$ and consider $p$, the type of $c$ over the empty set in the language of $\MM$. Since $\MM_{0}$ is prime, $p$ is isolated by a formula $\phi(x)$. By lemma \ref{lem:Cpure}, the set that this formula defines inside $\CC$ is also defined by a formula in the language of rings and with parameters in $F\cap \CC=C_{F}$. However, being algebraically closed, $C_{F}$ is an elementary substructure of $\CC$ (in the language of rings), and so $\phi(\CC)\cap C_{F}$ is not empty. This implies that, as  $\phi$ is an isolating formula over $F$, it must be of the form $x=c'$ for some $c'\in C_{F}$.\end{proof}  

The following fact tells us that the whole extension can be somehow interpreted, in a multi-sorted way, inside $\MM_{0}$, as if it were an scaffolding built on its side. 

\begin{lemma}\label{lem:correspM0}
There is a bijection between the set of definably closed subsets of $\MM_{0}^{eq}$ and the set of definably closed $ID$-fields lying between $F$ and $K$.
\end{lemma}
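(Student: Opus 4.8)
The plan is to realise $\MM_{0}$ concretely inside $\UU$ and then read definable closures off on both sides. First I would observe that, since all elements of $\XX=\tp(a/F)^{\UU}$ are $F$-conjugate in $\UU$, the set $\XX$ carries a unique $1$-type over $\emptyset$ in $\MM$, which is therefore isolated; hence $a$ lies in the atomic model and, since all prime models of $\Th(\MM)$ over $\emptyset$ are isomorphic, we may and do assume $a\in\MM_{0}$. Next I would identify the universe of $\MM_{0}$: by Lemma~\ref{lem:primeM} its constant part is $C_{F}$, and I claim its $\XX$-part $\XX_{0}$ is contained in $K$ (in fact it equals $\XX\cap K$, the $\gal(K/F)$-orbit of $a$). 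For this inclusion, if $b\in\XX_{0}$ and $\tau\in\Gal(K/F)$ is the unique element with $\tau(a)=b$, then by the construction in the proof of Theorem~\ref{fact:definability} the tuple $\mu(\tau)$ is $F$-definable from $a$ and $b$, so $\mu(\tau)\in\dcl_{\MM_{0}}(a,b)\subseteq\MM_{0}$; thus $\mu(\tau)\in\MM_{0}\cap\CC=C_{F}$, and the Fact that $\mu(\gal(K/F))=G(C_{F})$ then gives $b=\tau(a)\in K$. (The reverse inclusion, not strictly needed, follows from $K=\dcl_{\UU}(Fa)$ together with the fact that $\tp_{\MM}(\sigma(a)/a)$ is isolated for $\sigma\in\gal(K/F)$.) In particular $\dcl_{\UU}(F\cup\MM_{0})=\dcl_{\UU}(Fa)=K$.

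Using that $\Th(\MM)$ has quantifier elimination and that $\MM$ is saturated, the $\emptyset$-definable sets of $\MM^{eq}$ are, on the home sorts, precisely the traces of $F$-definable sets of $\UU^{eq}$; combined with elimination of imaginaries in $\UU$, every $e\in\MM_{0}^{eq}$ has a canonical real code $c(e)\in\dcl_{\UU^{eq}}(F\cup\MM_{0})=K$. I would then define, for a definably closed $D\subseteq\MM_{0}^{eq}$, $\Phi(D)=\dcl_{\UU}(F\cup c(D))$, and for a definably closed $ID$-field $L$ with $F\subseteq L\subseteq K$, $\Psi(L)=\{e\in\MM_{0}^{eq}:c(e)\in\dcl_{\UU^{eq}}(L)\}$. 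Since $c(D)\subseteq K$, $\Phi(D)$ is a definably closed $ID$-subfield with $F\subseteq\Phi(D)\subseteq K$; and $\Psi(L)$ is definably closed in $\MM_{0}^{eq}$ because $\dcl$ in $\MM_{0}^{eq}$ is contained in the trace of $\dcl$ in $\UU^{eq}$. Both maps are inclusion-preserving, and $\Phi(\dcl_{\MM_{0}^{eq}}(\emptyset))=F$, $\Phi(\MM_{0}^{eq})=K$.

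The heart of the matter is the identity, for finite tuples $\bar e$ from $\MM_{0}^{eq}$,
\[\dcl_{\UU^{eq}}(F\cup c(\bar e))\cap c(\MM_{0}^{eq})=c\bigl(\dcl_{\MM_{0}^{eq}}(\bar e)\bigr).\]
The inclusion $\supseteq$ is the ``reduct'' direction just mentioned. For $\subseteq$, suppose $e\in\MM_{0}^{eq}$ with $c(e)\in\dcl_{\UU^{eq}}(F\,c(\bar e))$ but $e\notin\dcl_{\MM_{0}^{eq}}(\bar e)$. Since $\Th(\MM)$ is totally transcendental, $\MM_{0}^{eq}$ is atomic, so over finite parameters $\dcl_{\MM_{0}^{eq}}$ coincides with the fixed-point operator of $\Aut(\MM_{0}^{eq})$; hence some $\alpha\in\Aut(\MM_{0}^{eq}/\bar e)$ moves $e$. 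Its restriction to $\MM_{0}$ preserves all $F$-definable relations of $\UU$ (these being exactly the basic relations of $\MM$), so it is partial $F$-elementary in $\UU$ and, by saturation of $\UU$, extends to an automorphism of $\UU$, and thus of $\UU^{eq}$, fixing $F$ pointwise. This extension fixes $c(\bar e)$, hence fixes $c(e)$, hence fixes $e$, contradicting $\alpha(e)\ne e$. I expect this step — obtaining the nontrivial inclusion of the trace identity, in particular passing through atomicity of $\MM_{0}^{eq}$ to recover $\dcl$ from automorphisms and then lifting automorphisms of $\MM_{0}$ to $\UU$ — to be the main obstacle.

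Granting the identity, $\Psi\circ\Phi=\mathrm{id}$ follows by applying it with $\bar e$ a finite subtuple of $D$ (using that $\dcl$ is finitary). For $\Phi\circ\Psi=\mathrm{id}$, the inclusion $\Phi(\Psi(L))\subseteq L$ holds because $F\subseteq L=\dcl_{\UU}(L)$; for the reverse, given $\ell\in L\subseteq K=\dcl_{\UU}(Fa)$, write $\ell=g(a)$ with $g$ an $F$-definable function and let $E_{g}$ be the $F$-definable equivalence relation $g(x_{1})=g(x_{2})$ on $\XX$. Then $[a]_{E_{g}}\in\dcl_{\MM_{0}^{eq}}(a)\subseteq\MM_{0}^{eq}$, and $c([a]_{E_{g}})$ is interdefinable over $F$ with $g(a)=\ell\in L$, so $[a]_{E_{g}}\in\Psi(L)$ and $\ell\in\Phi(\Psi(L))$. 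Hence $\Phi$ and $\Psi$ are mutually inverse, inclusion-preserving bijections, which is the assertion of the lemma.
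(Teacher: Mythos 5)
Your proof is correct and rests on the same core idea as the paper's: every $e\in\MM_{0}^{eq}$ is interdefinable over $F$ in $\UU^{eq}$ with a real tuple in $K$ (via quantifier elimination in $\MM$, stability and elimination of imaginaries in $\UU$), and conversely every $e\in K$ is coded by an element $a/E_{g}$ of $\MM_{0}^{eq}$. The paper's proof stops exactly there, at the element-wise correspondence, and leaves implicit the verification that this correspondence actually induces a bijection on $\dcl$-closed sets, which is what the lemma asserts. You isolate this as the ``heart of the matter'' and supply the missing argument: the trace identity
\[\dcl_{\UU^{eq}}(F\cup c(\bar e))\cap c(\MM_{0}^{eq})=c\bigl(\dcl_{\MM_{0}^{eq}}(\bar e)\bigr),\]
whose nontrivial direction you prove by lifting automorphisms of $\MM_{0}$ (using atomicity and homogeneity of the prime model of a t.t.\ theory to recover $\dcl$ from $\Aut$, and saturation of $\UU$ to extend partial $F$-elementary maps). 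You also make explicit, via Lemma~\ref{lem:primeM}, the Fact $\mu(\gal(K/F))=G(C_{F})$, and the $u$-function, the description $\MM_{0}\cap\XX=\XX\cap K$ that the paper uses without comment. So this is not a different route but a substantially more rigorous execution of the same one; what it buys is a genuinely complete proof of the stated bijection rather than merely its restriction to singleton imaginaries. One small caution: the ``canonical real code'' $c$ depends on a choice of lift $E\mapsto E'$ and need not be globally injective across sorts, so the cleanest phrasing is to bypass the set-level identity and invoke directly the automorphism argument you give (which yields $e\in\dcl_{\MM_{0}^{eq}}(\bar e)$, not merely $c(e)\in c(\dcl_{\MM_{0}^{eq}}(\bar e))$) when checking $\Psi\circ\Phi=\mathrm{id}$; as written the conclusion already follows, but the identity as stated slightly understates what the argument proves.
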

\begin{proof}
Any $d\in \MM_{0}^{eq}$ is of the form $a'/E$ where $E$, by quantifier elimination, is a quantifier free $\emptyset$-definable equivalence relation in $\MM$. This means that, in $\UU$, we have that $E$ is the intersection of $\XX$ and some $F$-definable set $E'$ in $\UU$. By stability of $SCH_{p,1}$, it can be assumed that $E'$ is also an equivalence relation. By elimination of imaginaries in $SCH_{p,1}$, we know that $a'/E'$ is interdefinable over $F$ in $\UU$ with some tuple $e\in \dcl(F,a')$. Note that, since there is $c\in \CC\cap\MM_{0}=C_{F}$ such that $a'=u(a,c)$, then, $e\in \dcl(F,a)=K$. Thus, $d$ is interdefinable over $F$ in $\UU$ with a tuple in $K$. 

Let now $e\in K$. Then, $e=f(a)$ for some $F$-definable function $f$. Let $E(x,y)$ be $f(x)=f(y)$. The restriction of $E$ is $\emptyset$-definable in $\MM_{0}$ and $d=a/E\in \MM_{0}^{eq}$. Clearly, $d$ (seen as an element in $\UU$) is interdefinable over $F$ with $e$. \end{proof}

\subsection{Galois correspondence and a $G$-primitive element theorem}

Let $K$ a strongly normal extension of $F$ and $G$ the algebraic group whose $\CC$-rational points are isomorphic to $\Gal(K/F)$, as provided by theorem \ref{fact:definability}. As in the previous subsection, let $\MM$ the scaffolding built for the extension $K/F$ and $\MM_0$ its prime model over the empty set. 

\begin{definition}Given $L$ a definably closed subfield of $K$ containing $F$, let \[G_L=\{g\in G(\CC)\ :\ g(c)=c \textrm{ for all }c\in L\}.\]\end{definition}

\begin{theorem}\label{th:galcorr} Let $K/F$ be a strongly normal extension of ID-fields. If $L$ is a definably closed intermediate ID-field in $K/F$, then:
\begin{enumerate}[(i)]
\item $K/L$ is strongly normal.
\item $G_L$ is a $C_{F}$-definable subgroup of $G(\CC)$ and is isomorphic to $Gal(K/L)$.
\item The correspondence $L\mapsto G_L$ between intermediate ID-fields and $C_{F}$-definable subgroups of $G$ is an injection. 
\item $L/F$ is strongly normal if and only if $G_L$ is a normal subgroup of $G(\CC)$. In this case, $G(\CC)/G_L\cong Gal(L/F)$.
\end{enumerate}
\end{theorem}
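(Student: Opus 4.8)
The plan is to treat the four clauses in turn, using Lemma~\ref{lem:liftingauts} (applied to whichever intermediate extension is relevant) as the bridge between automorphism groups and $F$-embeddings, and the pureness of $\CC$ (Fact~\ref{lem:Cpure}) to descend definability to $C_F$. For \textbf{(i)}, conditions (1)--(3) of Definition~\ref{def:isne} are immediate: $C_F\subseteq C_L\subseteq C_K=C_F$ is algebraically closed; $K=F\dgen{a}^{s}\subseteq L\dgen{a}^{s}\subseteq K$; and an $L$-embedding of $K$ into $\UU$ is in particular an $F$-embedding, so its image lies in $K\dgen{\CC}$ by (3) for $K/F$. Condition (4) reduces, via Benoist's descriptions of $\dcl$, $\acl$ and the elimination of imaginaries and the characterisation of finite multiplicity established before Theorem~\ref{fact:definability}, to the assertion that $\tp(a/L)$ has finite multiplicity; this I will prove directly. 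Given an extension $r$ of $\tp(a/L)$ to $\acl(L)$ with realisation $a'$, every $c\in L$ equals $g_c(a)$ for an $F$-definable $g_c$, and the formula $g_c(x)=c$ lies in $\tp(a/L)$, so $g_c(a')=c$ and hence $L\subseteq\dcl(Fa')$; if $a',a''$ realise extensions of $\tp(a/L)$ restricting to the \emph{same} extension of $\tp(a/F)$ to $\acl(F)$, any $\acl(F)$-automorphism taking $a'$ to $a''$ fixes each $c=g_c(a')\in L$, hence fixes $\acl(L)\subseteq\acl(\acl(F)\cup L)$ pointwise, so the two extensions coincide. Thus $\tp(a/L)$ has no more extensions to $\acl(L)$ than $\tp(a/F)$ has to $\acl(F)$, which is finite by Fact~\ref{fact:mult}.

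For \textbf{(ii)} and \textbf{(iii)}: for $c\in K$ write $c=f(a)$ with $f$ an $F$-definable function; since the $\Gal(K/F)$-action on $\XX$ is $(F\cup\{a\})$-definable (Theorem~\ref{fact:definability}) and $g\cdot c=f(g\cdot a)$, each stabiliser $\mathrm{Stab}(c)=\{g\in G(\CC):f(g\cdot a)=f(a)\}$ is a definable subgroup of $G(\CC)$. Now $G_L=\bigcap_{c\in L}\mathrm{Stab}(c)$, and since $G$ is an algebraic group (so the descending chain condition holds for definable subgroups of $G(\CC)$) this is $\bigcap_{i=1}^{n}\mathrm{Stab}(c_i)$ for finitely many $c_i\in L$; hence $G_L$ is definable over $F\cup\{a\}\cup\{c_1,\dots,c_n\}\subseteq K$, and as $G_L\subseteq\CC^{N}$ it is definable in $(\CC,+,\cdot)$ over $\dcl(Fac_1\cdots c_n)\cap\CC=K\cap\CC=C_K=C_F$ by Fact~\ref{lem:Cpure}; the isomorphism with $\Gal(K/L)$ is the restriction of $\mu$ to $\Gal(K/L)=\{\sigma\in\Gal(K/F):\sigma|_L=\mathrm{id}\}=\Aut_{\dd}(K\dgen{\CC}/L\dgen{\CC})$. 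For injectivity I show $K^{G_L}:=\{c\in K:g\cdot c=c \text{ for all }g\in G_L\}=L$: the inclusion $\supseteq$ is the definition, and for $\subseteq$, by Lemma~\ref{lem:liftingauts} the elements of $G_L$ are the extensions of the $F$-embeddings of $K$ fixing $L$, so if $c\in K\setminus\dcl(L)$ a second realisation of $\tp(c/L)$ together with $K=\dcl(La)$ and homogeneity gives such an embedding moving $c$. Applying this to $L'=\dcl(Fc_1\cdots c_n)$ (where the $c_i$ are as above) gives $G_{L'}=\bigcap_i\mathrm{Stab}(c_i)=G_L$, hence $L'=L$; so every intermediate definably closed ID-field is of the form $F\dgen{b}^{s}$ for a finite tuple $b$, a fact used in (iv).

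For \textbf{(iv)}: if $L/F$ is strongly normal, then for $\tau\in\Gal(K/F)$, $\sigma\in\Gal(K/L)=G_L$ and $c\in L$, condition (3) for $L/F$ applied to $\tau^{-1}|_L$ gives $\tau^{-1}(c)\in L\dgen{\CC}$, which $\sigma$ fixes, so $\tau\sigma\tau^{-1}(c)=c$; hence $G_L\trianglelefteq G(\CC)$. Conversely, assume $G_L\trianglelefteq G(\CC)$ and verify Definition~\ref{def:isne} for $L/F$: (1) is done above, (2) is $L=F\dgen{b}^{s}$ from the byproduct of (iii), and (4) holds because $b\in\dcl(Fa)$ and $\tp(a/F)$ has finite multiplicity (each extension of $\tp(b/F)$ to $\acl(F)$ is the pushforward of one of the finitely many extensions of $\tp(a/F)$ to $\acl(F)$, so $\tp(b/F)$ has finite multiplicity, whence the conclusion as in (i)). The crux — which I expect to be the main obstacle — is condition (3). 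Given an $F$-embedding $\sigma\colon L\to\UU$, extend it (ID-embeddings are elementary by quantifier elimination, and $K=\dcl(Fa)$) to $\tilde\sigma\colon K\to\UU$ and lift to $g\in G(\CC)$; for $h\in G_L$, normality gives $g^{-1}hg\in G_L$, so $h$ fixes $g(L)=\sigma(L)$ pointwise, i.e.\ $\sigma(L)$ lies in the fixed field of $\Aut_{\dd}(K\dgen{\CC}/L\dgen{\CC})$ in $K\dgen{\CC}$. One then checks that this fixed field equals $L\dgen{\CC}$: writing $\alpha\in K\dgen{\CC}=\dcl(L\dgen{\CC},a)$ as $F_0(a)$ for an $L\dgen{\CC}$-definable $F_0$, the analogue of Corollary~\ref{cor:prinhomsp} for $K/L$ forces $F_0$ to be constant on the realisation set of $\tp(a/L)$, so an $L\dgen{\CC}$-automorphism moving $\alpha$ would have to fix it — a contradiction unless $\alpha\in L\dgen{\CC}$. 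Hence $\sigma(L)\subseteq L\dgen{\CC}$, establishing (3).

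Finally, for the last assertion of (iv), restriction $\sigma\mapsto\sigma|_{L\dgen{\CC}}$ is a group homomorphism $\Gal(K/F)\to\Gal(L/F)=\Aut_{\dd}(L\dgen{\CC}/F\dgen{\CC})$ — well defined because $\sigma(L\dgen{\CC})=L\dgen{\CC}$, which follows from condition (3) for $L/F$ applied to both $\sigma|_L$ and $\sigma^{-1}|_L$ — with kernel $\{\sigma:\sigma|_L=\mathrm{id}\}=G_L$, and it is surjective because any $F$-embedding of $L$ extends to an $F$-embedding of $K$ (again using $K=\dcl(Fa)$) which by Lemma~\ref{lem:liftingauts} lifts into $\Gal(K/F)$; so $G(\CC)/G_L\cong\Gal(L/F)$. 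In summary, the only genuinely delicate points are the finite-multiplicity computation feeding clause (4) and, above all, the fixed-field computation inside $K\dgen{\CC}$ that makes the backward implication of (iv) work; everything else is bookkeeping around the lifting lemma and the pureness of $\CC$.
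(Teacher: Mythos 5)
Your treatment of (ii), (iii), and the outer structure of (iv) is sound and close to the paper's in spirit, with one nice variation: instead of invoking the paper's Lemma~\ref{lem:finitelygen} (proved there by passing to the totally transcendental scaffolding $\MM_0$ and extracting a canonical base), you recover finite generation of $L$ over $F$ directly from the descending chain condition on definable subgroups of the algebraic group $G(\CC)$. Your verification that the fixed field of $\Gal(K/L)$ inside $K\dgen{\CC}$ equals $L\dgen{\CC}$, which the paper merely asserts in the backward direction of (iv), is also essentially right.

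The gap is in (i), in your direct verification that $\tp(a/L)$ has finite multiplicity. You argue that if $a',a''$ realise extensions of $\tp(a/L)$ to $\acl(L)$ restricting to the same extension of $\tp(a/F)$ to $\acl(F)$, then an automorphism $\phi$ fixing $\acl(F)$ pointwise with $\phi(a')=a''$ also fixes $L$ pointwise and hence, you claim, fixes $\acl(L)$ pointwise. That last inference is false in general: fixing $\acl(F)\cup L$ pointwise forces $\phi$ to fix $\acl(\acl(F)\cup L)$ only \emph{setwise}, and $\acl(L)$ need not lie in $\dcl(\acl(F)\cup L)$ (already in a field, $\sqrt{t}$ is algebraic over $F(t)$ but not definable over $F^{a}(t)$). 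So you cannot conclude $\tp(a'/\acl(L))=\tp(a''/\acl(L))$, and the bound on the number of extensions of $\tp(a/L)$ to $\acl(L)$ does not follow. The paper avoids this entirely by verifying condition~(4) for $K/L$ inside the scaffolding: via Lemma~\ref{lem:correspM0}, $D=\acl(L)\cap K$ is identified with a subset of $\MM_0^{eq}$, and $\omega$-stability of $\Th(\MM)$ makes $\Cb(\tp(a/\acl(L)))$ a finite tuple $d$ with $L\dgen{d}^s=D$, giving (4) at once. Since your argument for condition~(3) in (iv) invokes the analogue of Corollary~\ref{cor:prinhomsp} for $K/L$, which requires (i), the gap propagates there; substituting the scaffolding argument for condition~(4) in (i) repairs both.
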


Before proving the theorem, let us observe that definably closed intermediate fields of a strongly normal extension are finitely generated over the base field. More precisely:

\begin{lemma}\label{lem:finitelygen}If $K/F$ is strongly normal and  $L$ is an intermediate definably closed $ID$-field, then $L=F\dgen{b}^{s}$ for some $b=(b_{1},\cdots,b_{m})$. 
\end{lemma}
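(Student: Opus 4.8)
The plan is to use the bi-interpretation between the scaffolding $\MM$ and the structure $\NN$ on $\CC$, together with the fact that $\Th(\MM)$ is totally transcendental, to reduce the claim to a statement about the prime model $\MM_{0}$. First I would observe that $L$, being a definably closed intermediate $ID$-field, corresponds (via Lemma \ref{lem:correspM0} applied in the obvious way, or directly via the bi-interpretation of Fact \ref{fact:biinterp}) to a definably closed subset of $\MM^{eq}$, or better, to a subset of $\CC$ definably closed in $\NN$. Concretely, each element of $L$ is of the form $f(a)$ for some $F$-definable function $f$, and under the isomorphism $\XX\cong Y=Z/E$ from the proof of Theorem \ref{fact:definability} this transports to an $a$-definable, hence (after the bi-interpretation) to an element of $\CC$ definable over $C_{F}$ together with finitely many parameters from $Y$.

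The key step is then to exploit total transcendence of $\NN$ (equivalently of $\Th(\MM)$). Since $\NN=(\CC,\ldots)$ is essentially $\CC$ with names for $C_{F}$, it is $\omega$-stable, and in an $\omega$-stable theory every set is interalgebraic -- in fact interdefinable, after adding the relevant canonical bases -- with a finite tuple. More precisely, the definably closed subset of $\CC$ (in the sense of $\NN$) corresponding to $L$ cannot have infinite "dimension" relative to $C_{F}$ in a way that resists finite generation: one shows the subfield of $\CC$ generated by it over $C_{F}$ has finite transcendence degree, because $L\subseteq K=F\dgen{a}^{s}$ and $a$ is a finite tuple, so $\trdeg(K/F)<\infty$ and hence the image inside $\CC$ has finite transcendence degree over $C_{F}$ as well. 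Pick a finite tuple $\bar c\in\CC\cap L$ (viewing the image of $L$) witnessing this transcendence degree; then the definable closure of $C_{F}\bar c$ inside $\NN$ recovers the image of $L$, and pulling back through the bi-interpretation we get a finite tuple in $L$ whose definable closure (as an $ID$-field, i.e. $\dgen{\cdot}^{s}$) is all of $L$.

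Finally I would translate back: if $b\in L$ is the pullback of $\bar c$ (or the finitely many elements of $K$ interdefinable over $F$ with the relevant quotient classes), then $F\dgen{b}^{s}\subseteq L$ trivially, and for the reverse inclusion any $e\in L$ is interdefinable over $F$ with a tuple that, inside $\NN$, lies in $\dcl(C_{F}\bar c)$, hence $e\in\dcl(Fb)=F\dgen{b}^{s}$. Since $L$ is already definably closed this gives $L=F\dgen{b}^{s}$.

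The main obstacle I expect is making the "finite transcendence degree forces finite generation" step fully rigorous in the stable-but-not-superstable ambient $\UU$: one must be careful that finite $\trdeg(L/F)$ really does imply $L$ is finitely generated as an $ID$-field over $F$ (a priori the iterative derivation could force infinitely many generators), and this is where the passage to the totally transcendental scaffolding $\MM$ -- where definable closure of a finite set is well-controlled and prime models exist -- is doing the real work. One should check carefully that the bi-interpretation of Fact \ref{fact:biinterp} and the correspondence of Lemma \ref{lem:correspM0} are compatible, so that "finitely generated in $\NN$ over $C_{F}$" transfers correctly to "$F\dgen{b}^{s}$ for a finite tuple $b$".
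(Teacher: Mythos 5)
Your high-level reduction is correct and matches the paper: transfer the problem, via Lemma~\ref{lem:correspM0} and the scaffolding $\MM_{0}$, into the totally transcendental setting, find a finite tuple $b$ there witnessing finite generation, and pull back. But the specific finite-generation argument you give does not work. You reduce to showing that a definably closed subset $S$ (the image of $L$) of $\CC$ with finite transcendence degree over $C_{F}$ satisfies $S=\dcl(C_{F}\bar c)$ for a finite transcendence basis $\bar c$ of $S$. Finite transcendence degree only yields $S\subseteq\acl(C_{F}\bar c)$, never $S\subseteq\dcl(C_{F}\bar c)$: already in pure $ACF_{0}$ the definably closed subfield $\mathbb Q(\sqrt 2,\sqrt 3,\sqrt 5,\dots)$ has transcendence degree zero yet is not the definable closure of any finite tuple. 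Your slogan ``in an $\omega$-stable theory every set is interalgebraic --- in fact interdefinable, after adding the relevant canonical bases --- with a finite tuple'' is likewise false as stated, for the same reason. (You also assert $\trdeg(K/F)<\infty$ simply because $a$ is a finite tuple; for $ID$-fields the iterative derivatives $\dd_{i}(a)$ can a priori contribute infinitely many transcendentals, so this too needs an argument.)

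What actually makes the lemma true is the stronger hypothesis $L\subseteq K=\dcl(Fa)$, and you have to use it more directly than just to bound transcendence degree. The correct move --- which you gesture at with ``after adding the relevant canonical bases'' but then abandon in favour of the transcendence-degree step --- is this: in $\MM_{0}$, the type $p=\tp(a/L)$ does not fork over $F\cup\{b\}$ for a finite tuple $b\subseteq\dcl(L)$ obtained from the canonical bases of the finitely many extensions of $p$ to $\acl(L)$ (this is where total transcendence is used). Then, for $e\in L$, write $e=g(a)$ with $g$ an $F$-definable function (possible precisely because $L\subseteq\dcl(Fa)$), let $\phi(x,y)$ be $g(x)=y$, and observe that the $\phi$-definition $d\phi_{x}(y)$ of $\tp(a/\acl(L))$ over $\dcl(Fb)$ is satisfied exactly by $y=e$; hence $e\in\dcl(Fb)$. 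This gives $L=F\dgen b^{s}$. The point is that you need the definability-of-types machinery, not a dimension count; the dimension count alone cannot distinguish $\dcl$ from $\acl$.
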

\begin{proof}
Consider $L$ as a definably closed subset in $\MM_{0}^{eq}$ and let $p=\tp(a/L)$. Since $Th(\MM)$ is totally transcendental, there is a finite tuple $b$ such that $p$ is the unique non-forking extension over $L$ of $\tp(a/Fb)$. This $b$ is the tuple of the canonical bases of each of the finitely many complete extensions of $p$ to $\acl(L)$. We claim $F\dgen{b}^{s}=L$. 

The left to right containment is clear. On the other hand, if $e\in L$, let $g(\cdot)$ an $F$-definable function such that $g(a)=e$. Consider the formula $\phi(x,y)$ defined as $g(x)=y$ and let $d\phi_{x}(y)$ be the $\tp(a/\acl(L))$-definition of $\phi$ over $\dcl(F,b)$. Clearly, $d\phi_{x}(e')$ iff $e=e'$, and so $e\in\dcl(F,b)$. 
\end{proof}

\begin{proof}[Proof of theorem \ref{th:galcorr}]
Let $L=F\dgen{b}^{s}$.

%\begin{enumerate}
(ii) and (iii) are easy.

For (i), observe that as $K/F$ is strongly normal, conditions (1), (2) (by lemma \ref{lem:finitelygen}) and (3) of the definition of strongly normal extensions immediately hold in $K/L$. Condition (4) requires an explanation:

The correspondence provided by lemma \ref{lem:correspM0} allows us to see $D=\acl(L)\cap K$ as a subset of $\MM_{0}^{eq}$. Consider then, in $\MM$, the type $\tp(a/D)$. Since $K=L\dgen{a}^{s}$, the canonical base of $\tp(a/\acl(L))$ is contained inside $D$. By $\omega$-stability of $Th(\MM)$, we have that $\Cb(\tp(a/\acl(L))$ is interdefinable with $d$, a single finite tuple in $\MM^{eq}$. It is easy to check that $L\dgen{d}^{s}=D$.

Finally, for (iv), suppose that $L/F$ is strongly normal and let $N$ be the normalizer of $\Gal(K/L)$ in $\Gal(K/F)$. As both $\Gal(K/L)$ and $\Gal(K/F)$ are definable, so is $N$. Thus, by part (3) of the present theorem, $N=G_{L'}$ for some $L'$ such that $F<L'<L$. We will prove that $N=\Gal(K/F)$:

Let $\sigma\in \Gal(K/F)$, an automorphism $\tau\in \Gal(K/L)$ and $d\in L$. Since $L/F$ is strongly normal, $\sigma(d)\in L\dgen{\CC}$ and so $\tau\sigma(d)=\sigma(d)$. This implies that $\sigma^{-1}\tau\sigma(d)=d$ and thus we conclude that $\sigma^{-1}\tau\sigma\in\Gal(K/L)$ and, moreover, $\sigma\in N$ as $\tau\in \Gal(K/L)$ is arbitrary. 

Assume now that $G_{L}=\Gal(K/L)$ is a normal subgroup of $\Gal(K/F)$. Conditions (1), (2) and (4) from the definition of strongly normal extensions are clear for $L/F$. We need to prove (3): Let $\sigma \colon L \hookrightarrow \UU$ be an embedding of
  $L$ into $\UU$ over $F$. Because of saturation of $\UU$ and quantifier elimination, there is $\bar{\sigma}$ an embedding of $K$ into $\UU$ over $F$ such that $\sigma=\bar{\sigma}|_{L}$. Since $K/F$ is strongly normal, $\bar{\sigma}$ can be seen as an element of $\Gal(K/F)$ (lemma \ref{lem:liftingauts}). Let $d\in L$, we need to show that $\sigma(d)\in L\dgen{\CC}$: Consider $\tau\in \Gal(K/L)$ and observe that, as $\Gal(K/L)$ is normal in $\Gal(K/F)$, we have that \[\bar{\sigma}^{-1}\tau\bar{\sigma}(d)=\sigma^{-1}\tau\sigma(d)=d.\] That is, $\tau(\sigma(d))=\sigma(d)$. As $\tau$ is arbitrary, this implies that $\sigma(d)$ belongs to the set fixed by $\Gal(K/L)$, which is precisely $L\dgen{\CC}$.

Finally, consider the restriction map \[|_{L\dgen{\CC}}\colon \Gal(K/F)\to \Gal(L/F).\] This map is onto because of saturation of $\UU$ and quantifier elimination, and its kernel is $\Gal(K/L)$. This implies that  $G(\CC)/G_L\cong \Gal(L/F)$.
%\end{enumerate}
\end{proof}

\begin{theorem}\label{thm:pet}
Let $K/F$ be a strongly normal extension of ID-fields. Suppose $F$ is relatively algebraically closed in $\UU$. Then, there is $\alpha\in G(K)$ such that $K=F\dgen{\alpha}$, and for all $\sigma\in \Gal(K/F)$, we have that 
\[\sigma(\alpha)=(\mu(\sigma))^{-1}\cdot \alpha.\]
\end{theorem}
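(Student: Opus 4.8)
The plan is to use the action of $G(\CC)$ on $\XX = \tp(a/F)^{\UU}$ together with the arithmetic of arc spaces of the algebraic group $G$, exploiting that $F$ being relatively algebraically closed forces $\tp(a/F)$ to be stationary and the action to be defined over $F$ alone. First I would observe that, by Theorem \ref{fact:definability}, $G$ acts on $\XX$ and, by Corollary \ref{cor:prinhomsp}, this action is regular (simply transitive); moreover since $F^{a}\cap K = F\dgen{d}^{s}$ and $F$ is relatively algebraically closed, condition (4) of Definition \ref{def:isne} together with Fact \ref{fact:mult} gives that $\tp(a/F)$ is stationary, so the uniform definition $u(\cdot,\cdot)$ of Lemma \ref{lem:unifdef} is already $F$-definable without passing to $\acl(F)$. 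Hence the whole action of $G(\CC)$ on $\XX$ is $F$-definable, not merely $(F\cup\{a\})$-definable.

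The next step is to transport this action into $G$ itself. Fix a point $\alpha_0 \in G(K)$; because the action of $G(\CC)$ on $\XX$ is regular, the orbit map $g \mapsto g\cdot a$ identifies $\XX$ with a left coset space, but what we want is an element $\alpha$ that transforms under $\Gal(K/F)$ by left translation by $(\mu(\sigma))^{-1}$. The idea is to define $\alpha$ as the unique element of $G$, living in the appropriate arc bundle over the constants, satisfying $\alpha \cdot a = $ (a fixed reference point), i.e. set $\alpha$ to be the ``coordinate'' of $a$ in $\XX \cong G(\CC)$ relative to the base point. Concretely: since $\XX$ is a principal homogeneous space for $G(\CC)$ defined over $F$, and $\Gal(K/F) \cong G(\CC)$ via $\mu(\sigma) = $ the unique $g$ with $g\cdot a = \sigma(a)$, one checks that choosing $\alpha$ so that $\sigma(\alpha) = \mu(\sigma)^{-1}\alpha$ amounts to picking $\alpha$ in the $G$-torsor dual to $\XX$; a $K$-point of this torsor exists precisely because $\XX$ has the $K$-point $a$. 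I would then verify $K = F\dgen{\alpha}$: the inclusion $F\dgen{\alpha} \subseteq K$ holds since $\alpha \in G(K)$, and conversely $a \in F\dgen{\alpha}$ because $a$ is obtained from $\alpha$ and the fixed reference point by the ($F$-definable) group action and its inverse, so $K = F\dgen{a}^{s} \subseteq F\dgen{\alpha}^{s}$; that in fact $K = F\dgen{\alpha}$ without strict closure will follow from $F$ being relatively algebraically closed, hence $F\dgen{\alpha}$ being relatively algebraically closed in $\UU$ when $\alpha$ is a generic enough point, or more carefully from the fact that the relevant arc-space torsor is smooth so its coordinate ring needs no $p$-th roots adjoined.

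The transformation law is then a direct computation: for $\sigma \in \Gal(K/F)$, applying $\sigma$ to the defining relation $\alpha \cdot a_{\mathrm{ref}} = $ (something $F$-rational, hence $\sigma$-fixed) and using $\sigma(a) = \mu(\sigma)\cdot a$ together with the cocycle/homomorphism property $\mu(\sigma\tau) = \mu(\sigma)\mu(\tau)$ from Theorem \ref{fact:definability} yields $\sigma(\alpha)\cdot \mu(\sigma) \cdot a_{\mathrm{ref}}' = \alpha \cdot a_{\mathrm{ref}}'$ for the suitably transported reference point, whence $\sigma(\alpha) = \mu(\sigma)^{-1}\alpha$ by regularity of the action. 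The main obstacle I anticipate is not the group-theoretic bookkeeping but showing that $\alpha$ can be found \emph{inside $G(K)$} — i.e. that the relevant torsor has a $K$-rational point and that this point generates $K$ over $F$ without needing strict (radicial) closure; this is exactly where the hypothesis that $F$ is relatively algebraically closed in $\UU$ must be used, presumably via the structure of the arc bundle of $G$ (anticipating Definition \ref{def:arcs}) and the fact that over a relatively algebraically closed ID-field the $\lambda$-functions / $p$-th roots needed to express $\dcl$ are already present. I would handle this by first producing $\alpha \in G(\dcl(Fa)) = G(K)$ using Lemma \ref{lem:correspM0} or the bi-interpretability of Fact \ref{fact:biinterp} to locate the torsor point as an imaginary that is in fact real over $F$, and then separately arguing $F\dgen{\alpha} = K$ exactly, the equality (rather than mere $s$-closure) being the delicate point.
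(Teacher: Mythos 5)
Your proposal correctly identifies several of the right ingredients — that $F$ being relatively algebraically closed makes $\acl(F)=F$ and hence $\tp(a/F)$ stationary, that the transformation law $\sigma(\alpha)=\mu(\sigma)^{-1}\alpha$ should come from regularity of the $G(\CC)$-action on $\XX$, and that the delicate point is producing $\alpha$ actually \emph{inside} $G(K)$. But the construction you propose for $\alpha$ has a genuine gap. You want to ``set $\alpha$ to be the coordinate of $a$ in $\XX\cong G(\CC)$ relative to the base point,'' with the base point being ``something $F$-rational, hence $\sigma$-fixed.'' There is, in general, no $F$-rational point of $\XX$: if $\tp(a/F)$ had a realization $d$ in $F$, then since $\XX$ is a PHS for $G(\CC)$ we would have $a\in\dcl(F,\CC)$ and $K\subseteq F\dgen{\CC}$, which forces the extension to be generated by constants — a highly degenerate situation that the theorem certainly does not assume. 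So you cannot anchor $\alpha$ to an $F$-point of $\XX$, and the subsequent appeals to torsor duality, arc bundles, and smoothness do not manufacture one; the problem of where to get a distinguished $K$-point of the ``dual torsor'' is precisely the problem you set out to solve, and it is left open.

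The paper's resolution is a different and sharper trick. Write $h(x,y)$ for the (definable, and after adjoining finitely many derivations and $p$-th roots of the generators, \emph{rational}) function with $h(a,\sigma(a))=\mu(\sigma)$. Take $b,c$ realizing $\tp(a/F)$ with $a,b,c$ pairwise $F$-independent; then $h(a,b),h(a,c),h(c,b)\in G(\UU)$ and the cocycle identity $h(a,c)\cdot h(c,b)=h(a,b)$ holds. Now — this is the key move — $b$ is algebraically independent from $a,c$ over $F$, so because $F$ is relatively algebraically closed in $\UU$ one can \emph{specialize} $b$ to a tuple $d\in F$ while preserving the rational identity, obtaining $h(a,c)\cdot h(c,d)=h(a,d)$ with all three lying in $G(\UU)$. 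Crucially $d$ is \emph{not} a realization of $\tp(a/F)$; it is only a specialization point for the rational function $h$, so no $F$-point of $\XX$ is needed. One then sets $\alpha=h(a,d)\in G(K)$ (since $a,d\in K$ and $\alpha$ is interdefinable with $a$ over $F$), and the transformation law follows by picking $c\ind_F\sigma(a)$, using stationarity to get $\tp(a,c/F)=\tp(c,\sigma(a)/F)$, and chaining the cocycle identities. Your draft anticipates that ``this is exactly where the hypothesis that $F$ is relatively algebraically closed must be used,'' which is correct, but the mechanism is generic specialization of a rational cocycle to the base field, not the existence of an $F$-point of the torsor.
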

\begin{proof}
Let $b,c\in \UU$ with $\tp(b/F)=\tp(a/F)=\tp(c/F)$ such that $a \ind_F b$,
$a \ind_F c$ and $c \ind_F b$. Using the notation from the proof of \ref{fact:definability}, it is clear that
\[h(a,b),\  h(a,c),\ h(c,b)\in G(\UU)\]
and
\[h(a,c)\cdot h(c,b)=h(a,b).\]
Replacing $a, b, c$ by $a, b, c$ plus finitely many derivations and $p$th-roots, we may assume $h$ is rational. Since $b$ is algebraically independent of $a$ and $c$ over $F$ and $F$ is relatively algebraically closed in $\UU$, we can find $d\in F$ such that
\[h(a,d),\ h(c,d),\ h(a,c)\in G(\UU),\]
and
\[h(a,c)\cdot h(c,d)=h(a,d).\hspace{1cm}(\star)\] 
Let $\alpha=h(a,d)$. Observe first that $\alpha$ is interdefinable
with $a$ over $F$ (because of the way $h$ is defined) and so
$K=F\langle\alpha\rangle$. Additionally, as $a,d\in K$, we have that $\alpha\in
K$. Finally, let $\sigma\in Gal(K/F)$ and pick
$c\ind_F\sigma(a)$. Then $\tp(a, c/F)=\tp(c, \sigma(a)/F)$ by stationarity. So 
\[h(c,\sigma(a))\cdot h(\sigma(a),d)=h(c,d).\]
But this, combined with $(\star)$ implies that
\[h(a,c)\cdot h(c,\sigma(a))\cdot h(\sigma(a),d)=h(a,d).\]
Now, $h(a,c)\cdot h(c,\sigma(a))=\mu(\sigma)$ and
$h(\sigma(a),d)=\sigma(\alpha)$ (because $\sigma(d)=d$). So, we have
\[\mu(\sigma)\cdot \sigma(\alpha)=\alpha.\] 
Which is what was left to prove.
\end{proof}

\section{Iterative differential Galois extensions}\label{Section:Tres}

In the previous section we introduced a class of ID-field extensions with well behaved Galois groups. Now, we will concentrate on the differential equations that under suitable conditions have good Galois theory. 

\subsection{Arc bundles and the iterative logarithmic derivative}

\begin{definition}
For a natural number $m$ and an arbitrary field $k$, define $k^{(m)}$ as the ring $k[\epsilon]/(\epsilon^{m+1})$. View $k^{(m)}$ as a $k$-algebra under the natural map $a\mapsto a+0\epsilon+\cdots+0\epsilon^{m}$. \end{definition}

\begin{definition}

Let $X$ be an algebraic variety over $F$ and define, for any field $k$ extending $F$ \textbf{the $m^{th}$ arc bundle of $X$ over $k$}, denoted $\AA_{m}X(k)$, as the set of $k^{(m)}$-rational points of $X$. This can be seen as an actual algebraic variety by identifying points in $k^{(m)}$ with points in $(k^{m+1})$.

Now, if $f\colon X\to Y$ a (regular) map of algebraic varieties over $F$, then define $$\AA_{m}(f)\colon \AA_{m}X\to \AA_{m}Y$$ as the map which is given, on $k$-points, by evaluating $f$ on $X(k^{(m)})$. \end{definition}

\begin{fact}Let $X, Y$ and $Z$ be algebraic varieties over $F$. 
\begin{enumerate}
\item $\AA_{m}(X)\times \AA_{m}(Y)$ is naturally isomorphic to $\AA_{m}(X\times Y)$.
\item Suppose $f\colon X\to Y$ and $g\colon Y\to Z$ are regular maps defined over $F$, then $\AA_{m}(g\circ f)=\AA_{m}(g)\circ\AA_{m}(f)$. This is, $\AA_{m}$ is a functor from the category of algebraic varieties with regular maps over $F$ to itself. 
\item If $(G,\ \cdot\ )$ is an algebraic group defined over $F$, then so is $(\AA_{m}G,\AA_{m}(\ \cdot\ ))$.
\end{enumerate}
\end{fact}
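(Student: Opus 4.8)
The plan is to verify each of the three functoriality-type properties by unwinding the definition of the arc bundle in terms of $k^{(m)}$-points and reducing everything to the corresponding statement for the ring $k^{(m)} = k[\epsilon]/(\epsilon^{m+1})$, which is an elementary piece of commutative algebra. The underlying observation, used throughout, is that $\AA_m$ on objects is the functor of points $X \mapsto X(k^{(m)})$, and that $\AA_m(f)$ is induced by post-composition with the $k$-algebra structure map, so that all three claims are instances of the general nonsense that the functor $k \mapsto k^{(m)}$ from $F$-algebras to $F$-algebras preserves products and is itself functorial.

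For part (1), I would argue that for any $F$-algebra $k$ one has a natural bijection $(X \times Y)(k^{(m)}) \cong X(k^{(m)}) \times Y(k^{(m)})$, simply because the product variety represents the product functor; under the identification of $\AA_m Z(k)$ with a subvariety of affine space via $k^{(m)} \cong k^{m+1}$, this bijection is given by regular maps in both directions, hence is an isomorphism of varieties, and it is natural in $k$. For part (2), given regular maps $f \colon X \to Y$ and $g \colon Y \to Z$ over $F$, I would observe that $\AA_m(g \circ f)$ and $\AA_m(g) \circ \AA_m(f)$ agree on $k^{(m)}$-points for every $k$ — indeed on $k$-points of the arc bundle — because applying $g \circ f$ to a point of $X(k^{(m)})$ is literally the same as first applying $f$ and then $g$; since they agree on points of a sufficiently large (saturated) field they are equal as morphisms. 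The identity $\AA_m(\mathrm{id}_X) = \mathrm{id}_{\AA_m X}$ is immediate, which completes the verification that $\AA_m$ is a functor. For part (3), I would apply parts (1) and (2): the multiplication $\cdot \colon G \times G \to G$ is a regular map over $F$, so $\AA_m(\cdot) \colon \AA_m(G \times G) \to \AA_m G$ is a regular map, and composing with the isomorphism from part (1) gives a regular multiplication $\AA_m G \times \AA_m G \to \AA_m G$; the group axioms (associativity, identity from $\AA_m$ of the unit section, inverse from $\AA_m$ of the inversion map) transfer because $\AA_m$ is a product-preserving functor and the axioms are expressed by commutative diagrams of regular maps, to which parts (1) and (2) apply directly. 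Equivalently and more concretely, $\AA_m G(k) = G(k^{(m)})$ is literally a group for each $k$, functorially in $k$, which is exactly the data of a group scheme structure.

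I do not expect a serious obstacle here; the only point requiring mild care is checking that the functor-of-points description really does yield \emph{morphisms of varieties} (not just maps of sets) under the chosen coordinatization $k^{(m)} \leftrightarrow k^{m+1}$, i.e. that the bijection in part (1) and the map $\AA_m(f)$ in part (2) are given by polynomials with coefficients over $F$. This is routine: writing a regular map locally in coordinates and substituting $a_0 + a_1\epsilon + \cdots + a_m \epsilon^m$ and reducing modulo $\epsilon^{m+1}$ produces polynomial expressions in the $a_i$ with $F$-coefficients, and the comultiplication/projection identifications are visibly polynomial. So the proof is essentially a translation exercise, and I would present it briefly, emphasizing the functor-of-points viewpoint and leaving the coordinate bookkeeping to the reader.
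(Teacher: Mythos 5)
Your proposal is correct and follows essentially the same route as the paper: the paper also treats (1) and (2) as immediate from the definition of $\AA_m$ and deduces (3) by applying $\AA_m$ to the commutative diagrams expressing the group axioms (the paper spells out only associativity). Your additional remark about checking that the coordinatization $k^{(m)} \leftrightarrow k^{m+1}$ really yields morphisms of varieties is a reasonable bit of due diligence that the paper elides, but it does not change the argument.
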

\begin{proof}
(1) and (2) are immediate consequences of the given definition of $\AA$, and (3) follows from those two. For instance, for associativity, consider the commutative diagram,\[\xymatrix{(g_{1},g_{2},g_{3})\ar@{|->}[r]\ar@{|->}[d] &(g_{1}\cdot g_{2},g_{3})\ar@{|->}[d]\\
(g_{1},g_{2}\cdot g_{3})\ar@{|->}[r] & (g_{1}\cdot g_{2})\cdot g_{3}=g_{1}\cdot (g_{2}\cdot g_{3})}\]
and then apply $\AA_{m}$. 
\end{proof}

For $n>m$, the quotient map $k^{(n)}\to k^{(m)}$ induces a projection \[\rho_{n,m}\colon \AA_{n}X\to\AA_{m}X.\] Identifying $\AA_{0}$ with the identity functor we will write $\rho_{n,0}$ as $\rho_{n}$. 

\begin{definition}\label{def:arcs}

For $a\in X(k)$, \textbf{the $n^{th}$ arc space $\AA_{n}X_{a}$ of $X$ at $a$} is defined as the fibre of $\rho_{n}\colon \AA_{n}X\to X$ over $a$. 

Additionally, define $\AA X(k)$, \textbf{the full arc bundle of $X$ over $k$}, as the inverse limit of $(\AA_{i}X(k))_{i\in\omega}$. 

Observe that $\AA X(k)$ can be identified with the $k[[\epsilon]]$-rational points of $X$.\end{definition}

%\begin{example}\marginpar{\tiny $TX=\AA_{1}X$ when $X$ is defined over constants; Calculations of a few arcs for something like $y^{3}=x^{2}$.}
%\end{example}
Let us consider now the case when $F$ is a $ID$-field of positive characteristic. As before, assume $\UU$ is a highly saturated model of $SCH_{p,1}$.

\begin{lemma} Let $X$ be an algebraic variety defined over $C_{F}$.  

If $a\in X(\UU)$, then $\nabla_{X}(a)=(\dd_{0}(a),\dd_{1}(a),\ldots)\in\AA X(\UU)$ and in particular $\nabla_{X, m}(a)=(\dd_{0}(a),\ldots,\dd_{m}(a))\in\AA_{m}X(\UU)$ for any $m$.

Additionally, if $Y$ is an algebraic variety, $f\colon X\to Y$ is a morphism and both are also defined over $C_{F}$, then $\AA(f)\circ\nabla_{X}=\nabla_{Y} \circ f$. 
\end{lemma}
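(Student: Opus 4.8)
The plan is to reduce everything to the single observation that $\DD_\dd$ is a homomorphism of $C_F$-algebras. First I would note this: the ring homomorphism $\DD_\dd\colon\UU\to\UU[[\epsilon]]$ restricts to the identity on $C_F$, since $\dd_0$ is the identity and $\dd_i$ vanishes on $C_F$ for every $i\geq 1$, so $\DD_\dd(c)=c$ in $\UU[[\epsilon]]$ for each $c\in C_F$; likewise its reduction $\DD_\dd^{(m)}\colon\UU\to\UU^{(m)}=\UU[\epsilon]/(\epsilon^{m+1})$ modulo $\epsilon^{m+1}$ is a $C_F$-algebra homomorphism for every $m$. I would also keep in mind, per Definition \ref{def:arcs}, the identifications $\AA X(\UU)=X(\UU[[\epsilon]])$ and $\AA_m X(\UU)=X(\UU^{(m)})$, under which $\nabla_X(a)=(\dd_0(a),\dd_1(a),\dots)$ with $a=(a_1,\dots,a_n)$ is exactly the $\UU[[\epsilon]]$-point $\DD_\dd(a):=(\DD_\dd(a_1),\dots,\DD_\dd(a_n))$, and $\nabla_{X,m}(a)$ is its projection $\DD_\dd^{(m)}(a):=(\DD_\dd^{(m)}(a_1),\dots,\DD_\dd^{(m)}(a_n))$.

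Next I would reduce to the affine case. As $X$ is of finite type over $C_F$, it has a cover by affine open subsets defined over $C_F$, and the $\UU$-point $a$ lies in one of them, say $U=\Spec C_F[x_1,\dots,x_n]/I$, so that $a\in U(\UU)$. For every $P\in I$ we have $P(a)=0$ in $\UU$, hence $\DD_\dd(P(a))=0$ in $\UU[[\epsilon]]$; and since $\DD_\dd$ is a $C_F$-algebra homomorphism it commutes with the evaluation of $P$, whose coefficients it fixes, giving $P(\DD_\dd(a))=0$. Therefore $\DD_\dd(a)\in U(\UU[[\epsilon]])\subseteq X(\UU[[\epsilon]])=\AA X(\UU)$, that is, $\nabla_X(a)\in\AA X(\UU)$; projecting to the $m$-th level then gives $\nabla_{X,m}(a)\in\AA_m X(\UU)$ for every $m$.

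For the last assertion I would again pass to affine charts: pick affine opens $U\subseteq X$ and $V\subseteq Y$ defined over $C_F$ with $a\in U(\UU)$ and $f(U)\subseteq V$, so that $f|_U$ is given by polynomials $Q_1,\dots,Q_r\in C_F[x_1,\dots,x_n]$. Then, reading off the action of $\AA(f)$ on $\UU[[\epsilon]]$-points, $\AA(f)(\nabla_X(a))=f(\DD_\dd(a))=(Q_1(\DD_\dd(a)),\dots,Q_r(\DD_\dd(a)))$, which by the same $C_F$-algebra homomorphism property equals $(\DD_\dd(Q_1(a)),\dots,\DD_\dd(Q_r(a)))=\nabla_Y(f(a))$, using $f(a)=(Q_1(a),\dots,Q_r(a))$. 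The functoriality of the arc-bundle construction recorded above makes this independent of the chosen charts, so $\AA(f)\circ\nabla_X=\nabla_Y\circ f$.

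I do not expect any substantive obstacle: the mathematical content is entirely the remark that $\DD_\dd$ fixes $C_F$. The only care required is bookkeeping --- arranging the affine opens to be defined over $C_F$, and matching the ``tuple of tuples'' notation $(\dd_0(a),\dots,\dd_m(a))$ with the affine coordinates of a genuine $\UU^{(m)}$-point of $X$ under the identification of $\AA_m X$ with a subvariety of affine space.
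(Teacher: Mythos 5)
Your proof is correct and follows essentially the same route as the paper's: identify $\AA X(\UU)$ with $X(\UU[[\epsilon]])$, observe that $\DD_\dd$ is a $C_F$-algebra (indeed $\CC$-algebra) homomorphism, and work locally on affine charts so that both claims reduce to $\DD_\dd$ commuting with evaluation of polynomials having constant coefficients. The only difference is that you spell out the affine-cover bookkeeping more explicitly than the paper does, which the paper leaves implicit under the phrase ``working locally.''
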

\begin{proof} Before starting, observe that $\nabla_{X}(a)$ is just another way of presenting $\DD_{\dd}(a)$ once you identify, as suggested above, $\AA X(\UU)$ and  $X(\UU[[\epsilon]])$. 

For the first part, recall that $\DD_{\dd}\colon \UU\to\UU[[\epsilon]]\colon x\to \sum_{i=0}^{\infty}\dd_{i}(x)\epsilon^{i}$ is a ring homorphism. Then, working locally, if $p(x)$ is one of the defining polynomials of $X$ and $a\in X(\UU)$, then $p(\DD_{\dd}(a))=0$. Which is another way of saying that $\nabla_{X}(a)\in\AA X(\UU)$.

For the second part, note that $\DD_{\dd}$ is not only a ring homomorphism but a $\CC$-algebra homomorphism. Thus, again locally, if $q(x)$ is a polynomial with constant coefficients, then $\DD_{\dd}(q(x))=q(\DD_{\dd}(x))$.  
\end{proof}
\begin{corollary} If $(G, \cdot)$ is an algebraic group defined over the constants of $F$, then $(\AA G,\AA (\cdot))$ is also a group and $\nabla_{G}\colon G\to \AA(G)$ is a group embedding.
\end{corollary}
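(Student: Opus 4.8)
The plan is to obtain the group structure on $\AA G$ directly from its description as a functor of points, and then to read off that $\nabla_{G}$ is a homomorphism by feeding the multiplication morphism of $G$ into the last part of the preceding lemma.

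First I would note that, under the identification of $\AA G(k)$ with $G(k[[\epsilon]])$, the operation $\AA(\ \cdot\ )$ is nothing but the group law of $G$ evaluated over the ring $k[[\epsilon]]$; hence $(\AA G,\AA(\ \cdot\ ))$ is a group. Equivalently, $\AA G=\varprojlim_{m}\AA_{m}G$ is an inverse limit of the algebraic groups $(\AA_{m}G,\AA_{m}(\ \cdot\ ))$ furnished by part (3) of the preceding Fact, taken along the projections $\rho_{n,m}$, and the latter are group homomorphisms because they are induced by the ring quotients $k^{(n)}\to k^{(m)}$; so $\AA G$ is in fact a pro-algebraic group. In the same spirit, the natural isomorphism $\AA(G\times G)\cong\AA G\times\AA G$ of part (1) of that Fact is an isomorphism of groups, the left-hand side carrying $\AA(\ \cdot\ _{G\times G})$ and the right-hand side the product group law.

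Next I would dispose of the bookkeeping identity $\nabla_{G\times G}=\nabla_{G}\times\nabla_{G}$ (under the isomorphism just mentioned): this is immediate since $\DD_{\dd}$ acts coordinatewise, so $\DD_{\dd}((a,b))=(\DD_{\dd}(a),\DD_{\dd}(b))$ for $(a,b)\in(G\times G)(\UU)$. Then I apply the preceding lemma to the multiplication morphism $\mult\colon G\times G\to G$, which is defined over $C_{F}$ precisely because $G$ is an algebraic group over $C_{F}$. The lemma gives $\AA(\mult)\circ\nabla_{G\times G}=\nabla_{G}\circ\mult$, i.e.
\[\AA(\ \cdot\ )\bigl(\nabla_{G}(a),\nabla_{G}(b)\bigr)=\nabla_{G}(a\cdot b)\qquad\text{for all }a,b\in G(\UU).\]
Since $\AA(\ \cdot\ )$ is the group law of $\AA G$, this is exactly the assertion that $\nabla_{G}\colon G(\UU)\to\AA G(\UU)$ is a group homomorphism; as such it automatically carries identity to identity and inverses to inverses.

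Finally, for injectivity I would invoke $\dd_{0}=\operatorname{id}$: since $\rho_{0}\circ\nabla_{G}=\nabla_{G,0}=\operatorname{id}_{G}$ (identifying $\AA_{0}$ with the identity functor), the map $\nabla_{G}$ has a left inverse, hence is injective, and therefore is a group embedding. I do not expect any real obstacle here; the one point that deserves a moment's care is matching $\nabla_{G\times G}$, $\AA(\mult)$ and $\AA(\ \cdot\ )$ through the natural isomorphisms of the preceding Fact, and verifying that $\mult$ is indeed defined over $C_{F}$ so that the hypothesis of the lemma genuinely applies.
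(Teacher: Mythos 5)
Your proof is correct and takes essentially the same route as the paper: both obtain the group structure on $\AA G$ as a pro-algebraic group (an inverse limit of the algebraic groups $\AA_m G$) and both derive the homomorphism property by applying the preceding lemma to the multiplication morphism $\mult\colon G\times G\to G$, yielding $\AA(\ \cdot\ )\circ\nabla_{G\times G}=\nabla_{G}\circ\mult$. The paper is terser and leaves implicit the identification $\nabla_{G\times G}=\nabla_{G}\times\nabla_{G}$ and the injectivity via $\rho_{0}\circ\nabla_{G}=\operatorname{id}_{G}$, which you spell out; these are worthwhile details but do not change the argument.
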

\begin{proof} Since $(G, \ \cdot\ )$ is an inverse limit of algebraic groups, it is a pro-algebraic group. For the second part, the previous lemma gives us that $\nabla_{G}\circ\ \cdot\ = \AA(\ \cdot\ )\circ \nabla_{G\times G}$.\end{proof}

Let $G$ be an algebraic group defined over $C_{F}$ and consider the following exact sequence of groups:

\[\xymatrix{\{(e,0)\}\ar@{->}[r] &\AA_{e}(G)\ar@{->}[r]^{i} & \AA(G)\ar@{->}[r]^{\pi} &G\ar@{->}[r] & \{e\},}\]where $i$ is the natural inclusion and $\pi$ the canonical projection. Note that $s\colon G\to \AA(G)\colon g\mapsto (g,0,0,\ldots)$ is a group embedding and so a homomorphic section of $\pi$. Recall also that the existence of such a homomorphic section provides us with an isomorphism $\AA(G)\cong \AA_{e}(G)\rtimes G$. Thus, we can identify $G$ with its image under $s$. Let $h\colon \AA(G)\to \AA_{e}(G)$ be the projection induced by this isomorphism. Given $(g,u)\in \AA(G)$, we have that $(g,u)=((g,u)\cdot s(g^{-1}))\cdot s(g)$, so $h((g,u))=(g,u)\cdot s(g^{-1})$. 
 
Although $h$ is not a group homomorphism, we have:
\begin{fact}\label{fact:key}If $h((g,u))=h((l,v))$ then $h((g,u)^{-1}\cdot(l,v))=(e,0)$. 
\end{fact}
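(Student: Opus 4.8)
The plan is to reduce the statement to the observation that $h$ returns $(e,0)$ exactly on the image of the section $s$, and that the fibres of $h$ are precisely the left cosets of $s(G)$ in $\AA(G)$; the rest is a short manipulation inside the semidirect product decomposition $\AA(G)\cong\AA_{e}(G)\rtimes G$.

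First I would record the explicit description of $h$ already noted just before the statement: for $z\in\AA(G)$ one has $h(z)=z\cdot s(\pi(z))^{-1}$, where $\pi\colon\AA(G)\to G$ is the projection. Since $s$ is a group homomorphism with $\pi\circ s=id$, we have $s(\pi(z))^{-1}=s(\pi(z)^{-1})$, and $\pi(h(z))=e$ for every $z$. From this it follows that $h(z)=(e,0)$ if and only if $z=s(\pi(z))$, that is, if and only if $z\in s(G)$.

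Next I would translate the hypothesis into a statement about $s(G)$. Write $x=(g,u)$ and $y=(l,v)$. The equality $h(x)=h(y)$ reads $x\cdot s(\pi(x))^{-1}=y\cdot s(\pi(y))^{-1}$; multiplying on the right by $s(\pi(x))$ and using that $s$ is a homomorphism gives $x=y\cdot s\bigl(\pi(y)^{-1}\pi(x)\bigr)$. Hence $y^{-1}x\in s(G)$, and since $s(G)$ is a subgroup, also $x^{-1}y=(y^{-1}x)^{-1}\in s(G)$. By the first step this is exactly $h\bigl((g,u)^{-1}\cdot(l,v)\bigr)=h(x^{-1}y)=(e,0)$, which is the claim.

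The only point that requires care is that $h$ is \emph{not} a group homomorphism, so one cannot simply write $h(x^{-1}y)=h(x)^{-1}h(y)=(e,0)$; the argument instead passes through the fact that the fibres of $h$ are the left translates of $s(G)$, which is precisely what the short computation above establishes (the converse inclusion, needed only if one wants the full fibre description, follows by the same computation run backwards). I do not expect any genuine obstacle here — it is a routine verification within the fixed splitting $\AA(G)\cong\AA_{e}(G)\rtimes G$.
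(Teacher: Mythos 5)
Your proof is essentially the paper's own argument: you rearrange the hypothesis $h(x)=h(y)$ to conclude $x^{-1}y\in s(G)$, then observe that $h$ vanishes on $s(G)$, which is exactly the computation the paper carries out. The extra framing in terms of fibres of $h$ being left cosets of $s(G)$ is a harmless gloss on the same manipulation.
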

\begin{proof}
Since $h((g,u))=h((l,v))$, then, by definition, \[(g,u)\cdot s(g^{-1})=(l,v)\cdot s(l^{-1}).\] Reorganizing the equation, we get \[(g,u)^{-1}\cdot(l,v)=s(g^{-1})\cdot s(l).\] Now, applying $h$ to both sides, we obtain \[h((g,u)^{-1}\cdot(l,v))=h(s(g^{-1}l))=s(g^{-1}l)\cdot s(l^{-1}g)=(e,0).\] \end{proof}

\begin{definition}\label{def:logder}
Define \textbf{the iterative logarithmic derivative} be the map \[\ell D \colon G(\UU)\to \AA G_{e}(\UU)\colon g\mapsto h(\nabla(g)).\]
\end{definition}

\begin{fact}\label{fact:KerlD} If $G$ is defined over $C_{F}$, then $Ker(\ell D)=G(\CC)$. Furthermore, if $\ell D(x)=\ell D(y)$, then $x^{-1}\cdot y\in G(\CC)$. \end{fact}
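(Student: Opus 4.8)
The plan is to unwind the definitions of $\ell D$ and $h$ and reduce the whole statement to the elementary remark that, under the identification of $\AA G(\UU)$ with $G(\UU[[\epsilon]])$, the arc $\nabla(g)=\DD_{\dd}(g)$ coincides with the constant arc $s(g)=(g,0,0,\ldots)$ exactly when $\dd_{i}(g)=0$ for all $i\geq 1$, i.e. exactly when $g\in G(\CC)$.

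First I would compute $\ell D$ explicitly. Since $h((g,u))=(g,u)\cdot s(g^{-1})$ and $\pi(\nabla(g))=\dd_{0}(g)=g$, we get $\ell D(g)=h(\nabla(g))=\nabla(g)\cdot s(g)^{-1}$, using that $s$ is a homomorphism. Hence $\ell D(g)=(e,0)$ if and only if $\nabla(g)=s(g)$. Working in an affine chart of $G$ about $g$ --- legitimate because $G$, being an algebraic group, is covered by affine opens defined over $C_{F}$, and $\nabla_{G}$ is compatible with morphisms over $C_{F}$ by the lemma on $\nabla$ --- the equality $\nabla(g)=s(g)$ says precisely that $\dd_{i}(g)=0$ for every $i\geq 1$; since $\CC$ is by definition the common zero set of the $\dd_{i}$ with $i\geq 1$, this is equivalent to $g\in G(\CC)$. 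Running the same computation backwards gives the reverse inclusion: for $g\in G(\CC)$ one has $\nabla(g)=s(g)$, so $\ell D(g)=s(g)\cdot s(g^{-1})=(e,0)$. This establishes $Ker(\ell D)=G(\CC)$.

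For the second assertion I would combine Fact \ref{fact:key} with the fact, established in the corollary above, that $\nabla_{G}\colon G\to \AA(G)$ is a group homomorphism. Given $\ell D(x)=\ell D(y)$, that is, $h(\nabla(x))=h(\nabla(y))$, Fact \ref{fact:key} applied with $(g,u)=\nabla(x)$ and $(l,v)=\nabla(y)$ yields $h\bigl(\nabla(x)^{-1}\cdot\nabla(y)\bigr)=(e,0)$. Since $\nabla$ is a homomorphism, $\nabla(x)^{-1}\cdot\nabla(y)=\nabla(x^{-1}y)$, so $\ell D(x^{-1}y)=(e,0)$, and the first part gives $x^{-1}y\in G(\CC)$, as desired.

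I do not expect a genuine obstacle here: once the structural facts about $\AA$, $\nabla$ and $h$ are in place the argument is pure bookkeeping. The one point I would take care to spell out is the chart-independence used in the second paragraph --- that ``$\nabla(g)$ is a constant arc'' is an intrinsic condition on $g$, equivalent to $g$ being a constant point of $G$ --- since this is where the functoriality of $\AA_{m}$ and the compatibility of $\nabla$ with morphisms defined over $C_{F}$ are really being used.
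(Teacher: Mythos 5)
Your argument is correct and follows essentially the same route as the paper: you unwind $\ell D(g)=\nabla(g)\cdot s(g)^{-1}$ to see that $\ell D(g)=(e,0)$ iff $\nabla(g)=(g,0,0,\ldots)$ iff $\dd_i(g)=0$ for all $i\geq 1$, and for the second assertion you apply Fact \ref{fact:key} together with the homomorphism property of $\nabla$ to reduce to the first part. The paper's proof is terser (it leaves the second assertion as ``a direct consequence of Fact \ref{fact:key}''), but your filled-in details are exactly the intended ones.
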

\begin{proof}
If $\ell D(g)=(e,0)$, then $\nabla(g)\cdot (g^{-1},0)=(e,0)$. Thus $\nabla(g)=(g,0)$, which implies that  $\dd_{i}(g)=0$ for any $i$. That is, $g\in G(\CC)$. 

The additional remark is a direct consequence of fact \ref{fact:key}. 
\end{proof}

The logarithmic derivative in the (differential) characteristic zero case, defined as a map from $G$ to $\LL(G)$, the Lie algebra of $G$, is surjective. In our setting, that is not the case:
\begin{example}
Let $G=\mathbb{G}_{a}$, the additive group. Then $\AA(G)=\sum_{i=0}^{\infty} \mathbb{G}_{a}$ and \[\ell D(g)=(g,\dd_{1}(g),\ldots)-(g,0,\cdots)=(0,\dd_{1}(g), \dd_{2}(g),\ldots).\] Thus, $\Imag(\ell D)$ is contained in the set \[\{(x_{i})\colon x_{0}=0 \text{ and, for } i>0, \dd_{j}(x_{i})=\binom{i+j}{i}(x_{i+j})\},\] which is clearly not equal to $\AA_{e}(G)$.
\end{example}

\subsection{Logarithmic differential equations and Galois extensions}

\begin{definition}\label{def:idge}Given an algebraic group defined over the constants of $(F,\dd)$ a (non-trivial) definably closed $ID$-field of characteristic $p$ with algebraically closed constant field, by a \textbf{consistent logarithmic differential equation} over $F$ we mean something of the form \[ \ell D(x)=\alpha,\] where $\ell D$ is defined as in the previous section and $\alpha \in \AA G_{e}(F)$ is an element contained in the image of $\ell D$. 

By an \textbf{iterative differential Galois extension} of $F$ for that given logarithmic differential equation we mean $K=F\dgen{a}^{s}$, where $\ell D(a)=\alpha$ and $C_{F}=C_{K}$. \end{definition}

\begin{theorem}[Existence and Uniqueness of iterative differential Galois extensions]If $G$ is an algebraic group defined over the constants of $(F,\dd)$ and $\ell D(x)=\alpha$ is a (consistent) logarithmic differential equation over $F$, then there exists an iterative differential Galois extension of $F$ for the given equation. Furthermore, any two such extensions are isomorphic over $F$ as $ID$-fields.\label{th:existanduniq}
\end{theorem}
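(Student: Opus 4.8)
The plan is to handle existence and uniqueness separately, using the type-definable constant field $\CC$ and the basic model theory of $SCH_p$ as the main tools, in close analogy with the classical Kolchin/Pillay argument.

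For \textbf{existence}, I would first use consistency: since $\alpha\in\AA G_e(F)$ lies in the image of $\ell D$, there is $a\in G(\UU)$ with $\ell D(a)=\alpha$. The candidate extension is $K=F\dgen{a}^s=\dcl(Fa)$. The only thing to check is that adjoining $a$ does not enlarge the constants, i.e. $C_K=C_F$. The point is that the realisations of $\tp(a/F)$ differ from one another by right translation by $G(\CC)$: if $\ell D(a)=\ell D(a')=\alpha$ then by Fact \ref{fact:KerlD} we have $a^{-1}\cdot a'\in G(\CC)$. So $\tp(a/F)$ is internal to $\CC$, and one can choose the solution $a$ generically, namely so that $a$ is independent from $\CC$ over $F$ in the sense of the stable theory $SCH_p$. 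For such a generic solution, any new constant $c\in C_K\setminus C_F$ would lie in $\dcl(Fa)\cap\CC$ but, by the independence $a\ind_F\CC$ together with the fact that $\CC$ is type-definable over $\emptyset$ (so $\dcl(Fa)\cap\CC\subseteq\dcl(F\cap\CC)=C_F$ for a generic solution), this forces $c\in C_F$. (Alternatively, and perhaps more cleanly: among all solutions $a'$ of the equation, pick one of minimal $\trdeg$ over $F$; any other solution is $a\cdot g$ for $g\in G(\CC)$, and minimality forces the relevant constants to already be in $C_F$.) This yields an iterative differential Galois extension.

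For \textbf{uniqueness}, suppose $K=F\dgen{a}^s$ and $K'=F\dgen{a'}^s$ are two such extensions, with $\ell D(a)=\ell D(a')=\alpha$ and $C_K=C_{K'}=C_F$. By Fact \ref{fact:KerlD}, inside $\UU$ we have $a^{-1}\cdot a'=g\in G(\CC)$, so $a'=a\cdot g$. Now I would argue that we may choose the copies of $K$ and $K'$ inside $\UU$ so that $a'$ is independent from $a$ over $F$; concretely, realise $\tp(a'/F)$ by an element independent from $a$ over $F$ (using that $\tp(a'/F)$ is finitely satisfiable/extendible by saturation of $\UU$), which does not change the isomorphism type of $K'/F$. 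With this choice, the element $g=a^{-1}a'\in G(\CC)$ satisfies: $g\ind_F a$ in an appropriate sense, hence $\tp(g/F)$ is the generic type of its locus over $C_F$ and in particular $g$ is "independent from" the extension generated by $a$. The desired $ID$-field isomorphism $K\to K'$ over $F$ is then the one sending $a\mapsto a'=a\cdot g$: one checks it is well defined by verifying $\tp(a/F)=\tp(a'/F)$, which follows because $\CC$ is algebraically closed and type-definable over $\emptyset$, so $\tp(aF/\CC)$ and $\tp(a'F/\CC)$ are the nonforking extensions of the equal types $\tp(aF/C_F)=\tp(a'F/C_F)$ (this is exactly the argument used in the proof of Lemma \ref{lem:liftingauts}), and restricting back down gives $\tp(a/F)=\tp(a'/F)$; by homogeneity of $\UU$ this extends to an automorphism of $\UU$ over $F$, whose restriction carries $K=\dcl(Fa)$ isomorphically onto $\dcl(Fa')=K'$.

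The \textbf{main obstacle} I anticipate is the constants-control step in existence: ensuring $C_K=C_F$. Unlike the characteristic zero case, $SCH_p$ is not superstable and there is no differential closure to appeal to, and $\CC=\UU^{p^\infty}$ is only type-definable, so one has to be careful that "generic" or "minimal transcendence degree" solutions behave as expected and that $\dcl(Fa)\cap\CC$ really collapses to $C_F$. Establishing this cleanly — presumably via the independence $a\ind_F\CC$ and the purity/type-definability of $\CC$ (Facts \ref{lem:Cpure} and the computation $C_\UU=\UU^{p^\infty}$) — is the technical heart of the argument; once it is in place, uniqueness is a routine application of the internality-to-$\CC$ reasoning already developed for Lemma \ref{lem:liftingauts}.
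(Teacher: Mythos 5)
Your proposal deliberately bypasses the paper's scaffolding structure $\MM = (\XX, \CC)$ and tries to work directly in $\UU$ with forking independence and internality to $\CC$. This is a genuinely different route, but as written it has two concrete gaps, one in each half.

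\textbf{Existence.} You want to choose a solution $a$ of $\ell D(x)=\alpha$ with $a \ind_F \CC$, and then deduce $\dcl(Fa)\cap\CC = C_F$. The problem is that $\CC = \UU^{p^\infty}$ has the same cardinality as $\UU$, so ``$a \ind_F \CC$'' amounts to realizing (inside $\UU$) a complete type over the monster-sized parameter set $F\cup\CC$; saturation of $\UU$ does not guarantee this. You flag this as the ``technical heart'' but don't resolve it, and your fallback (a solution of minimal transcendence degree) is not justified either: it is not shown that a minimum is attained, nor that minimality would force $\dcl(Fa)\cap\CC\subseteq C_F$. The paper sidesteps exactly this obstruction by replacing the intractable structure $\UU$ with the two-sorted structure $\MM=(\XX,\CC)$, proving that $\Th(\MM)$ is totally transcendental (Lemma \ref{lem:scaf2}), taking a prime model $\MM_0$ over $\emptyset$, and showing $\MM_0\cap\CC=C_F$. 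Choosing $a\in\MM_0\cap\XX$ then gives the constants-control automatically, since $\dcl(Fa)\cap\CC$ sits inside $\MM_0\cap\CC$. Your acknowledged inability to ``appeal to a differential closure'' is precisely what the scaffolding replaces; it is not optional here.

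\textbf{Uniqueness.} You want to conclude $\tp(a/F)=\tp(a'/F)$ from ``$\tp(aF/C_F)=\tp(a'F/C_F)$,'' invoking the argument of Lemma \ref{lem:liftingauts}. But the equality $\tp(aF/C_F)=\tp(a'F/C_F)$ (types of the infinite tuples enumerating $F$, together with $a$ resp.\ $a'$, over $C_F\subseteq F$) is \emph{equivalent} to $\tp(a/F)=\tp(a'/F)$; you are assuming exactly what you need to prove. In the lemma this equality was a hypothesis, not a conclusion. Making $a'$ independent from $a$ over $F$ does not create it. The paper instead takes $\MM_1$ prime over $a'$ in the t.t.\ auxiliary theory, observes $\MM_1\cap\CC=C_F$, and uses atomicity of $\MM_1$ to find $a''\in\MM_1$ with $\tp_\MM(a'')=\tp_\MM(a)$; this yields $g=(a'')^{-1}a'\in\MM_1\cap\CC=C_F$, hence $\dcl(Fa')=\dcl(Fa'')$, and only then transfers the type equality from $\MM$ to $\UU$. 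That prime-model step is the missing mechanism that breaks your circularity.

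In short: your high-level intuition (internality to $\CC$, the $G(\CC)$-torsor structure on $\XX$, $\ker(\ell D)=G(\CC)$) matches the paper, but both halves of the argument quietly require the existence of prime/atomic models over $F$ and over $a'$, which the ambient theory $SCH_p$ does not supply. The scaffolding $\MM$ and Lemma \ref{lem:scaf2} are not a stylistic choice; they are what makes the ``generic solution'' and the ``canonical comparison of solutions'' actually available.
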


Once again, we will depend on the use of an appropriate auxiliary structure in order to prove this. Let $\MM$ be the two-sorted structure $(\XX,\CC)$, where $\XX$ is the set of solutions in $\UU$ of the equation $\ell D(x)=\alpha$, and the relations of $\MM$ are those induced by $F$-definable sets in $\UU$. 

\begin{lemma}\label{lem:scaf2}
$\MM$ is saturated, its theory $\Th(\MM)$ has quantifier elimination, it is totally transcendental and, additionally,  \[\MM_{0}\cap \CC=C_{F}.\]
\end{lemma}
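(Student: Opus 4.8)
The plan is to mimic the scaffolding argument already used for iterative strongly normal extensions (Facts \ref{fact:biinterp}, \ref{fact:M} and Lemma \ref{lem:primeM}), since the structure $\MM=(\XX,\CC)$ for a logarithmic differential equation has essentially the same shape: $\XX$ is a principal homogeneous space, this time with a very concrete description. Concretely, Fact \ref{fact:KerlD} tells us that if $a$ and $b$ both satisfy $\ell D(x)=\alpha$, then $a^{-1}\cdot b\in G(\CC)$; conversely $a\cdot g$ solves the equation for any $g\in G(\CC)$. So $\XX$ is a principal homogeneous space for $G(\CC)$ acting (say on the right) by $F$-definable — indeed $\alpha$- and $G$-definable, hence $F$-definable — operations, and in particular for a fixed $a\in\XX$ the map $G(\CC)\to\XX$, $g\mapsto a\cdot g$, is an $(F\cup\{a\})$-definable bijection. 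I would first record this, then run the three sub-arguments in the same order as before.

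First, the total transcendence and saturation of $\Th(\MM)$: let $\NN$ be the structure with universe $\CC$ and relations induced by the $F\dgen{a}^{s}$-definable sets of $\UU$, i.e. $\CC$ with names for elements of $C_F$ (note $\dcl(Fa)\cap\CC\subseteq C_F$ using $C_F$ algebraically closed and that $a$ itself adds no new constants beyond $C_F$, which one checks as in the strongly normal case, or directly since $G(\CC)$ is $C_F$-definable). As in Fact \ref{fact:biinterp}, $(\MM,a)$ is bi-interpretable with $\NN$: one direction is that $\CC$ meets $F\dgen{a}^{s}$-definable sets in $a$-definable subsets of $\MM$; the other uses the $(F\cup\{a\})$-definable bijection $\XX\cong G(\CC)\subseteq\CC$ coming from $g\mapsto a\cdot g$ to transport $(\MM,a)$-definable subsets of $\XX$ over to $\NN$. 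Since $\NN$ is a pure algebraically closed field with added constants, it is saturated and totally transcendental; as these properties pass to reducts and to interpretable structures, so are $\MM$ and hence, by forgetting the constant $a$, $\Th(\MM)$ is totally transcendental and $\MM$ is saturated. Quantifier elimination follows exactly as in Fact \ref{fact:M}: $\MM$ is saturated, so it suffices to show it is quantifier-free homogeneous, and two finite tuples from $\MM$ with the same quantifier-free $\MM$-type have the same type over $F$ in $\UU$ (all relations of $\MM$ are restrictions of $F$-definable ones), so homogeneity of $\UU$ over $F$ supplies an $F$-automorphism of $\UU$ whose restriction is the required automorphism of $\MM$.

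Finally, $\MM_0\cap\CC=C_F$: this is verbatim Lemma \ref{lem:primeM}. Take $c\in\MM_0\cap\CC$; its type over $\emptyset$ in the language of $\MM$ is isolated by some $\phi(x)$ since $\MM_0$ is prime; by Fact \ref{lem:Cpure} (purity of $\CC$) $\phi(\CC)$ is defined by a ring formula over $F\cap\CC=C_F$; since $C_F$ is algebraically closed it is an elementary substructure of $\CC$, so $\phi(\CC)\cap C_F\neq\emptyset$, and an isolating formula defining a nonempty algebraic-over-$C_F$ subset of a pure ACF must pin down a single point $c'\in C_F$, forcing $c=c'\in C_F$. The reverse inclusion $C_F\subseteq\MM_0$ holds because $\MM_0$, being an elementary substructure of $\MM$, contains $\dcl^{\MM}(\emptyset)\supseteq C_F$ (the elements of $C_F$ are named, or are $\emptyset$-definable points of $\CC$ inside $\MM$).

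The only genuine obstacle — and it is the same one as in the strongly normal case — is getting the bi-interpretability with $\NN$ cleanly, in particular verifying that passing to $(\MM,a)$ does not enlarge the constant field beyond $C_F$, so that $\NN$ really is just $\CC$ with names for $C_F$ and nothing more. Here this is easier than before: $\XX$ is an honest $\alpha$-definable coset-like set in the pro-algebraic group $\AA G$, and the bijection with $G(\CC)$ is given by explicit group operations, so the identification is transparent and one does not need the delicate multiplicity argument of Fact \ref{fact:mult}. Everything else is a routine transcription of the earlier scaffolding lemmas.
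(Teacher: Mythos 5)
Your proof is correct and follows essentially the same approach as the paper: define $\NN$ as $\CC$ with relations induced by $F\dgen{a}^{s}$-definable sets, establish bi-interpretability of $(\MM,a)$ with $\NN$ via the explicit bijection $G(\CC)\leftrightarrow\XX$ coming from Fact~\ref{fact:KerlD}, then transfer total transcendence and saturation, prove quantifier elimination via quantifier-free homogeneity, and repeat the Lemma~\ref{lem:primeM} argument for $\MM_0\cap\CC=C_F$. One small remark: you spend effort worrying about whether $\dcl(Fa)\cap\CC\subseteq C_F$, but this is not actually needed for the lemma --- $\NN$ being $\CC$ with \emph{any} set of constants named (not necessarily just $C_F$) already yields total transcendence and saturation, and the $\MM_0\cap\CC=C_F$ claim goes through via Fact~\ref{lem:Cpure} with parameter set $F$ regardless, since the relations of $\MM$ are induced by $F$-definable (not $Fa$-definable) sets; indeed the paper takes $a'$ to be an \emph{arbitrary} solution and does not check this. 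Your closing observation --- that the coset structure of $\XX$ under $G(\CC)$ makes the bi-interpretability transparent, so the multiplicity/uniform-definability machinery of Fact~\ref{fact:mult} and Lemma~\ref{lem:unifdef} is not needed here --- is accurate and worth retaining.
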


\begin{proof} Just as in the proof of fact \ref{fact:M}, this depends on the bi-interpretability of an expansion of $\MM$ by a constant and another simpler structure. Let $a'$ be {\em any} solution of the given logarithmic differential equation and let $\NN$ be the structure whose universe is $\CC$ and whose relations are induced by the $F\dgen{a'}$-definable sets in $\UU$. We will see that $\NN$ is bi-interpretable with $(\MM,a')$:

Let $Y=G(\CC)$. As a subset of $\NN$, we have that $Y$ is definable. Observe that there is a one-to-one correspondence between $\XX$ and $Y$. This is given by the fact that, for any $b\in \XX$, there is $g\in G(\CC)$ such that $g\cdot a'=b$ (a corollary of fact \ref{fact:key}). Note that such $g$ is unique given $b$ and $a'$. Let $f\colon Y\to \XX\colon g\mapsto g\cdot a'$. The fact that $\XX$ and $Y$ are isomorphic via this function is proved as in fact \ref{fact:biinterp}. This shows that $\XX$ (and so $\MM$) is interpretable in $\NN$. The fact that $\NN$ is interpreted in $(\MM, a)$ is clear. 

Note that $\NN$ is, once again, totally transcendental and saturated, and thus the argument used to prove fact \ref{fact:M} applies. Hence, $\MM$ is also saturated and totally transcendental. The same is true for proving that $\MM$ has quantifier elimination. 

The proof that $\MM_{0}\cap \CC=C_{F}$ is exactly the same given for lemma \ref{lem:primeM}. 
\end{proof}

We now go back to the proof of theorem \ref{th:existanduniq}:

\begin{proof}[Proof of theorem \ref{th:existanduniq}] ({\em Existence}) Let $a \in \MM_{0}\cap \XX$ and $K=F\dgen{a}^{s}$. It is not hard to see that $C_{F}=C_{K}$.

%Let $b\in C_{K}$. Then $b=f(a)$ for some $F$-definable function $f$. The intersection of the graph of $f$ with $\XX \times \CC$ is the graph of a $\emptyset$-definable function in $\MM$, so $b=f(a)\in \MM_{0}\cap \CC$. Finally, $b\in C_{F}$ by the last part of lemma \ref{lem:scaf2}.

%For this, let $I=I(a/F)\subseteq K\{X\}$ be the $ID$-ideal of iterative differential polynomials (over $F$) that vanish at $a$. 

%Let $J\supseteq I$ be a maximal differential $ID$-ideal. As $F\{X\}/J$ is a simple $ID$-ring, lemma \ref{lem:simpidring} tells us that it is also a domain and so it can be embedded in $\UU$. Let $a'$ be the image of $X$ by this embedding. Observe that as $I\subseteq J$, then, $a'\in \XX$ and, moreover, since the type of $a$ in $\MM$ is principal, then $tp(a)=tp(a')$, which implies that $J=I$ and so $F[a]$ is already simple. 

({\em Uniqueness}) Let $a'\in \XX$ be such that $K'=F\dgen{a'}^{s}$ is another iterative differential Galois extension of $F$ for the given equation. 

Consider $\MM_{1}$ prime over $a'$. Note that $\MM_{1}\cap \CC=C_{F}$. Let $a''\in M_{1}$ with the same type as $a$ over the empty set. As $\MM_{1}\prec\MM$, there is $g\in M_{1}\cap \CC=C_{F}$ such that $a''\cdot g=a'$. This implies that $\dcl(Fa')=\dcl(Fa'')$.

Finally, since $\tp(a)=\tp(a'')$ in $\MM$, then $\tp(a/F)=\tp(a''/F)$ in $\UU$. This, by saturation, implies that $\dcl(Fa)$ is isomorphic to $\dcl(Fa'')=\dcl(Fa')$.

%Indeed, let $b\in \MM_{1}\cap \CC$ and $p=\tp(b/a')$ in $\MM$. Since $\MM_{1}$ is prime, $p$ is isolated by $\psi(x)$, an $a'$-definable subset of $\CC$. Since $Th(\MM)$ has quantifier elimination, and by lemma \ref{lem:Cpure}, we get that $\psi(x)$ is definable over some $c\in \CC$ in the structure $(\CC,+,\cdot)$ where $c\in \dcl(Fa')$. This is $c\in K'\cap\CC$, which is precisely $C_{F}$ by the definition of $ID$-Galois extensions.  As in the proof of lemma \ref{lem:primeM}, this proves that $b\in C_{F}$, finishing the proof of the claim.

%Now, since the type of $a$ is principal, let $a''\in M_{1}$ with the same type as $a$ over the empty set. As $\MM_{1}\prec\MM$, there is $g\in M_{1}\cap \CC=C_{F}$ such that $a''\cdot g=a'$ but this implies that $\dcl(Fa')=\dcl(Fa'')$.
 
%Now, since $\tp(a)=\tp(a'')$ in $\MM$, then $\tp(a/F)=\tp(a''/F)$ in $\UU$ and so, by saturation, there is an automorphism fixing $F$ and sending $a$ to $a''$. This automorphism, restricted to $\dcl(Fa)$, provides us with an isomorphism of $ID$-fields between $\dcl(Fa)$ and $\dcl(Fa'')$.
\end{proof}

\section{What goes around comes around}\label{Section:cuatro}

Section \ref{Section:Dos} introduced a class of extensions of differential fields with good Galois theory. Section \ref{Section:Tres} provided us with extensions of differential fields related to iterative logarithmic differential equations. In this section we will prove that, under certain conditions on the base field, these two notions coincide. 

\begin{theorem}\label{th:galextstrnor}
If $K$ is an iterative differential Galois extension of $F$ for a given logarithmic differential equation $\ell D(x)=\alpha$, then $K/F$ is a strongly normal extension.  
\end{theorem}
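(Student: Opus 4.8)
The plan is to verify the four conditions of Definition \ref{def:isne} for the extension $K = F\dgen{a}^{s}$ with $\ell D(a) = \alpha$. Condition (1) is immediate: $C_F = C_K$ holds by the very definition of an iterative differential Galois extension, and $C_F$ is algebraically closed by the standing hypothesis on the base field in Definition \ref{def:idge}. Condition (2) is also immediate since $K = F\dgen{a}^{s}$ by construction, with $a$ a finite tuple (a point of $\AA G_e(\UU)$, hence coordinatized by finitely many elements of $\UU$, and after adjoining finitely many derivations and $p$th-roots we may take $a$ itself finite).

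\textbf{The heart of the argument is condition (3).} Let $\sigma\colon K \hookrightarrow \UU$ be an embedding over $F$. Then $\ell D(\sigma(a)) = \sigma(\ell D(a)) = \sigma(\alpha) = \alpha$, because $\ell D$ is defined by $C_F$-rational data (the group $G$, the section $s$, the maps $\nabla$, $h$ are all defined over $C_F \subseteq F$) and $\sigma$ fixes $F$ pointwise. Thus $\sigma(a)$ is another solution of the same consistent logarithmic equation. By Fact \ref{fact:KerlD}, since $\ell D(a) = \ell D(\sigma(a))$, we get $a^{-1}\cdot\sigma(a) \in G(\CC)$; write $g = a^{-1}\cdot\sigma(a)\in G(\CC)$, so $\sigma(a) = a\cdot g$. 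Since the group multiplication of $G$ is defined over $C_F \subseteq F$, we conclude $\sigma(a) \in F\dgen{a,\CC}^{s} \subseteq K\dgen{\CC}$, and hence $\sigma(K) = F\dgen{\sigma(a)}^{s} \subseteq K\dgen{\CC}$. This is exactly condition (3).

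For condition (4), I would argue as in the proof of Theorem \ref{th:galcorr}(i): the scaffolding $\MM$ of Lemma \ref{lem:scaf2} is totally transcendental, so $\Cb(\tp(a/\acl(F)))$ is interdefinable over $F$ with a single finite tuple $d$ in $\MM^{eq}$; via the correspondence between definably closed subsets of $\MM_0^{eq}$ and definably closed $ID$-fields between $F$ and $K$ (which is available here because Lemma \ref{lem:scaf2} gives exactly the same scaffolding machinery used in Section \ref{Section:Dos}), this $d$ corresponds to $F^a \cap K$, and one checks $F\dgen{d}^{s} = F^a \cap K$ just as in Lemma \ref{lem:finitelygen} and Theorem \ref{th:galcorr}: the left-to-right inclusion is clear, and for $e \in F^a \cap K$, writing $e = f(a)$ for an $F$-definable $f$ and taking the $\tp(a/\acl(F))$-definition $d\phi_x$ of the formula $f(x) = y$ over $\dcl(F,d)$ shows $e \in \dcl(F,d)$.

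\textbf{The main obstacle} is making condition (3) watertight — specifically, ensuring that $\ell D$ really is defined over $C_F$ so that $\sigma$ commutes with it, and that the finiteness assumptions on the tuple $a$ are compatible with the ``adjoin finitely many derivations and $p$th-roots'' maneuver needed to keep $h$ (and the defining equations of $\XX$) genuinely algebraic rather than merely pro-algebraic; once that bookkeeping is in place, Fact \ref{fact:KerlD} does the real work and the rest is routine. The other point requiring minor care is condition (4): one must confirm that the bi-interpretability of Lemma \ref{lem:scaf2} yields the same $\MM_0^{eq}$-to-$ID$-fields correspondence as Lemma \ref{lem:correspM0}, which it does by the identical argument (quantifier elimination in $\Th(\MM)$ plus elimination of imaginaries in $SCH_p$).
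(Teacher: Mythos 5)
Your proof is correct and follows essentially the same route as the paper's. Conditions (1) and (2) are dispatched identically; for condition (3) the paper takes $a'$ with $\tp(a'/F)=\tp(a/F)$ and observes $\ell D(a')=\alpha$ because the equation is $F$-definable, then invokes Fact~\ref{fact:KerlD} to get $a^{-1}a'\in G(\CC)$ — your version with the embedding $\sigma$ and $\ell D(\sigma(a))=\sigma(\ell D(a))=\alpha$ is the same argument phrased through automorphisms rather than type equality; for condition (4) both you and the paper defer to the canonical-base/scaffolding argument from Theorem~\ref{th:galcorr}(i) via Lemma~\ref{lem:scaf2}, and the ``main obstacle'' about finiteness of $a$ and the pro-algebraic versus algebraic bookkeeping is present but not elaborated in the paper either, so your treatment is at the same level of detail as the original.
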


\begin{proof}
The first two conditions of the definition of strongly normal extensions are explicitly stated in our definition of $ID$-Galois extensions. For the third one, let $K=F\dgen{a}^{s}$, and $\tp(a'/F)=\tp(a/F)$. Since $\alpha\in \AA G_{e}(F)$ and $lD(a)=\alpha$, we have that $\ell D(a')=\alpha$, and this implies that $a^{-1}a'\in G(\CC)$ by fact \ref{fact:KerlD}. So, $a'=a\cdot d$ for some $d\in  G(\CC)$ and thus $a'\in K\dgen{\CC}$. Finally, for the fourth condition, the argument goes exactly as in the proof of the first part of theorem \ref{th:galcorr}. 
\end{proof}

\begin{theorem}\label{th:equiv}Suppose $F$ is relatively algebraically closed in $\UU$ and $K/F$ is a strongly normal extension. Let $G$ be the algebraic group over $C_{F}$ that is provided by theorem  \ref{fact:definability} whose set of $\CC$-rational points is isomorphic to $\Gal(K/F)$. Then $K/F$ is an iterative differential Galois extension for some logarithmic differential equation on $G$.
\end{theorem}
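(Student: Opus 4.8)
The plan is to reconstruct the logarithmic differential equation from the strongly normal extension by applying the iterative logarithmic derivative to a suitable primitive element. First I would invoke the $G$-primitive element theorem (Theorem \ref{thm:pet}), which applies since $F$ is relatively algebraically closed in $\UU$: this gives $\alpha \in G(K)$ with $K = F\dgen{\alpha}$ (hence $K = F\dgen{\alpha}^{s}$, as $K$ is definably closed) and $\sigma(\alpha) = \mu(\sigma)^{-1}\cdot\alpha$ for every $\sigma \in \Gal(K/F)$. The natural candidate for the equation is $\ell D(x) = \alpha^{*}$, where $\alpha^{*} := \ell D(\alpha) \in \AA G_{e}(\UU)$. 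So $\alpha$ is by construction a solution, $C_F = C_K$ holds by hypothesis, and it remains to check two things: that $\alpha^{*}$ actually lies in $\AA G_{e}(F)$ (so the equation is defined over $F$), and that the equation is consistent, i.e. $\alpha^{*}$ lies in the image of $\ell D$ — but this last point is automatic since $\alpha^{*} = \ell D(\alpha)$.

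The key step is therefore showing $\alpha^{*} = \ell D(\alpha) \in \AA G_{e}(F)$. Since $\ell D$ is built out of $\nabla_G$ and the map $h$, both of which are defined over $C_F \subseteq F$, the element $\ell D(\alpha)$ is certainly $F$-rational once we know it is fixed by every $F$-automorphism of $\UU$; by saturation and homogeneity it suffices to show $\ell D(\alpha)$ is fixed by every embedding $\sigma\colon K \hookrightarrow \UU$ over $F$, equivalently by every $\sigma \in \Gal(K/F)$ (using Lemma \ref{lem:liftingauts}, noting that $\ell D(\alpha) \in \AA G_{e}(K) \subseteq K\dgen{\CC}$, and that elements of $\AA G_e$ are built from constants and derivatives of coordinates). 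Now for $\sigma \in \Gal(K/F)$ we have $\sigma(\alpha) = \mu(\sigma)^{-1}\cdot\alpha$ with $\mu(\sigma) \in G(\CC)$; applying $\ell D$ and using that $\ell D$ is constant on left cosets of $G(\CC)$ — precisely the content of Fact \ref{fact:KerlD}, since $\sigma(\alpha)^{-1}\cdot\alpha = \alpha^{-1}\mu(\sigma)\alpha \in G(\CC)$ as $\mu(\sigma) \in \Gal(K/F) \cong G(\CC)$ acts and... — more cleanly: $\ell D(\mu(\sigma)^{-1}\alpha) = \ell D(\alpha)$ because $\ell D(g\cdot x) = \ell D(x)$ for $g \in G(\CC)$, which follows from $\nabla(g\cdot x) = \nabla(g)\cdot\nabla(x) = (g,0,\ldots)\cdot\nabla(x)$ and the definition of $h$. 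Hence $\sigma(\ell D(\alpha)) = \ell D(\sigma(\alpha)) = \ell D(\alpha)$, so $\ell D(\alpha)$ is $\Gal(K/F)$-invariant and therefore lies in $\AA G_{e}(F)$.

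Assembling the pieces: $K = F\dgen{\alpha}^{s}$ with $\ell D(\alpha) = \alpha^{*} \in \AA G_{e}(F)$, the equation $\ell D(x) = \alpha^{*}$ is consistent (it has the solution $\alpha$), and $C_F = C_K$; by Definition \ref{def:idge} this exhibits $K/F$ as an iterative differential Galois extension for $\ell D(x) = \alpha^{*}$ on $G$. The main obstacle I anticipate is the descent argument for $\ell D(\alpha) \in \AA G_e(F)$ — specifically making rigorous that $\Gal(K/F)$-invariance of an element of $K\dgen{\CC}$ that happens to lie in the arc-space part forces $F$-rationality; this needs the fact that $F\dgen{\CC}$ is the fixed field of $\Gal(K/F)$ together with the observation that $\AA G_e(K\dgen{\CC}) \cap (\text{fixed points}) = \AA G_e(F\dgen{\CC})$, and then that an $F\dgen{\CC}$-rational point of $\AA G_e$ built from an element of $K$ (namely $\alpha$) must already be $F\dgen{\alpha}^s \cap F\dgen{\CC} = F$-rational, using $C_K = C_F$.
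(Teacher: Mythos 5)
Your proposal follows the same route as the paper: invoke the $G$-primitive element theorem to get $\alpha \in G(K)$ with $K = F\dgen{\alpha}$ and $\sigma(\alpha) = \mu(\sigma)^{-1}\alpha$, set $\alpha^{*} = \ell D(\alpha)$, and establish $\alpha^{*} \in \AA G_{e}(F)$ by Galois descent via Lemma \ref{lem:liftingauts}. Two small remarks. First, the "main obstacle" you flag at the end is not actually there: you have already reduced the problem to showing that $\alpha^{*}$ is fixed by every $\bar\sigma \in \Aut(\UU/F)$ (pass from $\bar\sigma$ to $\bar\sigma|_{K}$, extend it to some $\sigma \in \Gal(K/F)$ by Lemma \ref{lem:liftingauts}, and note that $\alpha^{*}$ has all its coordinates in $K$, so $\bar\sigma$ and $\sigma$ agree on it). Once $\alpha^{*}$ is $\Aut(\UU/F)$-invariant, it lies in $\dcl(F) = F$ because $F$ is definably closed; there is no need to argue about $K \cap F\dgen{\CC}$ or pass through $F\dgen{\CC}$-rationality at all, which is a cleaner resolution than the one you sketch. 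Second, where the paper, having found $\alpha \in \AA G_{e}(F)$, detours through the uniqueness theorem (Theorem \ref{th:existanduniq}) to conclude $K$ is isomorphic over $F$ to the IDGE for $\ell D(x)=\alpha$, you observe directly that $K = F\dgen{\alpha}^{s}$ with $\ell D(\alpha) = \alpha^{*}$ and $C_{F}=C_{K}$ (condition (1) of strong normality) already matches Definition \ref{def:idge} — a modest but genuine simplification.
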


\begin{proof}
Let $K/F$ be a strongly normal extension and $G$ as in the statement. Let \[\mu\colon \Gal(K/F)\to G(\CC)\] witness the isomorphism. 

Since $F$ is relatively algebraically closed in $\UU$, the primitive element theorem provides us with $a \in G(K)$ such that $K=F\dgen{a}$ and, for any $\sigma\in \Gal(K/F)$, we have that $\sigma(a)=(\mu(\sigma))^{-1}\cdot a$. 

Let $\alpha=\ell D(a)$ and note that $\alpha$ is fixed by any automorphism of $\UU$ fixing $F$: let $\bar{\xi}\in Aut(\UU/F)$; since $K/F$ is strongly normal, lemma \ref{lem:liftingauts} tells us that $\xi=\bar{\xi}|_{K\dgen{\CC}}\in \Gal(K/F)$, and so $\xi(a)\cdot a^{-1}=(\mu(\xi))^{-1}\in G(\CC)=Ker(\ell D)$. Thus, \[\bar{\xi}(\alpha)=\ell D(\xi(a))=\ell D(a)=\alpha.\] Since $F$ is relatively algebraically closed in $\UU$, we get that $\alpha \in \AA G_{e}(F)$. 

Consider the iterative logarithmic differential equation $\ell D(x)=\alpha$. Let $K'=\dcl(Fa')$ be the {\em unique} iterative differential Galois extension of $F$ for the given equation. Note that, since $\ell D(a)=\ell D(a')$, fact \ref{fact:KerlD} tells us that there exists $g\in G(\CC)$ such that $a'=g^{-1}\cdot a$. Since $g\in G(\CC)$ there is $\sigma\in \Gal(K/F)$ such that $\mu(\sigma)=g$. So, by the way $a$ was chosen, \[a'=((\mu(\sigma))^{-1}\cdot a=\sigma(a),\]
which implies that $\sigma$ induces an isomorphism between $K$ and $K'$. Thus $K$ is isomorphic to a Galois differential extension of $F$ for an appropriate logarithmic differential equation.\end{proof}

To conclude, let us prove, under the same assumption on the base field, that a desirable equality between the transcendence degree of a strongly normal extension and the dimension of its Galois group holds. To be precise:

\begin{theorem}\label{th:trdegdim}
Suppose $F$ is relatively algebraically closed in $\UU$, the extension $K/F$ is strongly normal, and $G$ is an algebferaic group over $C_{F}$ such that $G(\CC)$ is isomorphic to $\Gal(K/F)$. Then,
\[\dim(G(\CC))=\trdeg(K/F).\]
\end{theorem}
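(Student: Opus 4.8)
The plan is to reduce the statement to a dimension count on the algebraic group $G$, combining the $G$-primitive element theorem with the structure of the iterative logarithmic derivative.

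First, since $F$ is relatively algebraically closed in $\UU$, Theorem \ref{thm:pet} gives a primitive element $a\in G(K)$ with $K=F\dgen{a}$ and $\sigma(a)=\mu(\sigma)^{-1}\cdot a$ for every $\sigma\in\Gal(K/F)$. The first step is to check that $F\dgen{a}$ is in fact the ordinary field $F(a)$. As in the proof of Theorem \ref{th:equiv}, the element $\alpha:=\ell D(a)$ is fixed by every automorphism of $\UU$ over $F$, so $\alpha\in\AA G_{e}(F)$; and from the identity $h((g,u))=(g,u)\cdot s(g^{-1})$ one recovers $\nabla_{G}(a)=\alpha\cdot s(a)$. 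Since the group operations on each finite arc group $\AA_{m}G$ are regular over $C_{F}\subseteq F$, and $s(a)=(a,0,0,\dots)$ has coordinates in $F(a)$ while $\alpha$ has coordinates in $F$, every $\dd_{i}(a)$ lies in $F(a)$. Hence $K=F(a)$, so $\trdeg(K/F)$ equals the dimension of the $F$-locus $V$ of $a$, a subvariety of $G$; in particular $\trdeg(K/F)\le\dim G=\dim G(\CC)$.

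For the reverse inequality I would identify $\XX=\tp(a/F)^{\UU}$ with $G(\CC)$ geometrically. By Lemma \ref{lem:liftingauts} and the construction of $\mu$, every realization of $\tp(a/F)$ has the form $\sigma(a)=g^{-1}\cdot a$ for a unique $g=\mu(\sigma)\in G(\CC)$, and conversely every such $g^{-1}\cdot a$ realizes $\tp(a/F)$; thus $\XX=\{\,g^{-1}\cdot a\ :\ g\in G(\CC)\,\}$ is exactly the image of $G(\CC)$ under the morphism $\phi\colon G\to G$, $x\mapsto x^{-1}\cdot a$, which is an isomorphism of varieties defined over $F(a)\subseteq K$. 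On the other hand, $\XX$ is the realization set of a complete type over $F$, so its Zariski closure over $F$ is precisely $V$. Since $G(\CC)=\phi^{-1}(\XX)\subseteq\phi^{-1}(V)$, the set $\phi^{-1}(V)$ is a Zariski-closed subset of $G$ containing $G(\CC)$; as $\CC$ is algebraically closed and $G$ is defined over $\CC$, $G(\CC)$ is Zariski-dense in $G$, so $\phi^{-1}(V)=G$ and hence $V=\phi(G)=G$. Therefore $\trdeg(K/F)=\dim V=\dim G=\dim G(\CC)$.

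The step I expect to be the main obstacle is the first one: verifying carefully that $F\dgen{a}=F(a)$, i.e.\ that the higher Hasse--Schmidt derivatives of the primitive element are already rational in $a$ over $F$. This requires being precise about the non-homomorphic section $s$, the map $h$, and the compatibility of $\nabla_{G}$ with the arc-group structure at every finite level; once this is in place, the rest is a routine combination of the principal-homogeneous-space picture of Corollary \ref{cor:prinhomsp} with elementary algebraic geometry. As an alternative to the density argument one can count transcendence degrees directly: choosing $g\in G(\CC)$ generic over $K$ and putting $b=g^{-1}\cdot a\in\XX$, one has $K(b)=K(g)$, so $\trdeg(b/K)=\dim G$, and comparing the two computations of $\trdeg(a,b/F)$ via $\trdeg(b/F)=\trdeg(a/F)=\trdeg(K/F)$ gives $\dim G\le\trdeg(K/F)$.
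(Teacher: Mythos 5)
Your proof is correct and follows essentially the same route as the paper: show that the $G$-primitive element $a$ satisfies $\nabla_G(a)=\alpha\cdot s(a)$ so that $K$ is rational over $F$ in $a$ (up to $p$th roots), then identify $\XX=\tp(a/F)^{\UU}$ with (a coset of) $G(\CC)$ and count dimensions. The paper's last step jumps directly from the rational bijection $\XX\to G(\CC)$ (defined over $K$, not $F$) to the equality of transcendence degrees; your Zariski-density argument on $\phi^{-1}(V)$, and the alternative count with a generic $g\in G(\CC)$ over $K$, make that passage explicit and are a welcome tightening of the same idea rather than a different approach.
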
 

\begin{proof}
By the previous result, we know that $K$ is an iterative differential Galois extension of $F$ for a consistent logarithmic differential equation $\ell D(x)=\alpha$ on $G(\UU)$ with $\alpha$ in $F$. That is, there is $a\in G(K)$ such that $\ell D(a)=\alpha$ and $K=F\dgen{a}^{s}$. 

First note that $K=F(a)^{s}$. To prove this, it is enough to see that $\dd_{n}(a)\in F(a)$ for all $n\in \omega$. However, by the definition of the logarithmic derivative, we know that $\nabla(a)=\alpha \cdot (a,0,0,\ldots)$, thus coordinate by coordinate, we obtain that $\dd_{n}(a)$ is a rational function of $a$ and the coefficients of $\alpha$, which are all in $F$. This in particular implies that $\trdeg(K/F)=\trdeg(F(a)/F)$.

Secondly, by fact \ref{fact:KerlD}, we know that for each $a', a''\in \XX$, where $\XX$ is the solution set in $G(\UU)$ of the equation, there is $g\in G(\CC)$ such that $g\cdot a'=a''$. Since $G(\CC)$ is precisely our isomorphic copy of $\Gal(K/F)$, this implies that, in $\UU$, any $a'\in\XX$ has the same type as $a$ over $F$. Thus, $\trdeg(K/F)$ is in fact equal to $\trdeg(\XX)$.

Finally, observe that, after naming $a\in \XX$, there is a rational bijection between $\XX$ and $G(\CC)$ given by $a'\mapsto a^{-1}\cdot a'$. So, $\trdeg(K/F)=\trdeg(\XX)=\trdeg(G(\CC))=\dim(G(\CC))$. 
\end{proof}

%\section*{Aknowledgements}

%\bibliographystyle{amsplain}	% (uses file "plain.bst")
%\bibliography{myrefs}

\end{document}